\numberwithin{equation}{section}
\newtheorem{theorem}{Theorem}[section]
\newtheorem{proposition}[theorem]{Proposition}
\newtheorem{def-prop}[theorem]{Definition and Proposition}
\newtheorem{definition}[theorem]{Definition}
\newtheorem{lemma}[theorem]{Lemma}
\newtheorem{corollary}[theorem]{Corollary}
\newtheorem{example}[theorem]{Example}
\newtheorem{remark}[theorem]{Remark}
\newcommand{\stau}{\mathop{\mathrm{s\tau}\text{-}\mathsf{tilt}}\nolimits}
\newcommand{\Image}{\mathop{\mathrm{Im}}\nolimits}
\newcommand{\Ker}{\mathop{\mathrm{Ker}}\nolimits}
\newcommand{\tops}{\mathop{\mathrm{top}}\nolimits}
\newcommand{\rad}{\mathop{\mathrm{rad}}\nolimits}
\newcommand{\soc}{\mathop{\mathrm{soc}}\nolimits}
\newcommand{\End}{\mathop{\mathrm{End}}\nolimits}
\newcommand{\Ext}{\mathop{\mathrm{Ext}}\nolimits}
\newcommand{\Hom}{\mathop{\mathrm{Hom}}\nolimits}
\newcommand{\Kb}{\mathop{\mathsf{K^{\rm b}}}\nolimits}
\newcommand{\Ktwo}{\mathop{\mathsf{K^{\rm 2}_{\epsilon}}}\nolimits}
\newcommand{\twosilt}{\mathop{\mathrm{2}\text{-}\mathsf{silt}}\nolimits}
\newcommand{\twotilt}{\mathop{\mathrm{2}\text{-}\mathsf{tilt}}\nolimits}
\newcommand{\thick}{\mathop{\mathsf{thick}}\nolimits}
\newcommand{\proj}{\mathop{\mathsf{proj}}\nolimits}
\newcommand{\Filt}{\mathop{\mathsf{Filt}}\nolimits}
\newcommand{\Sub}{\mathop{\mathsf{Sub}}\nolimits}
\newcommand{\Fac}{\mathop{\mathsf{Fac}}\nolimits}
\newcommand{\module}{\mathop{\mathsf{mod}}\nolimits}
\newcommand{\tilt}{\mathop{\mathsf{tilt}}\nolimits}
\newcommand{\ftors}{\mathop{\mathsf{f}\text{-}\mathsf{tors}}\nolimits}
\newcommand{\Hasse}{\mathop{\mathsf{Hasse}}\nolimits}
\newcommand{\fators}{\mathop{\mathsf{fa}\text{-}\mathsf{tors}}\nolimits}
\newcommand{\ffators}{\mathop{\mathsf{ffa}\text{-}\mathsf{tors}}\nolimits}
\newcommand{\add}{\mathop{\mathsf{add}}\nolimits}
\newcommand{\tors}{\mathop{\mathsf{tors}}\nolimits}
\newcommand{\coker}{\mathop{\mathrm{coker}}\nolimits}
\title[Torsion classes for algebras with radical square zero]{Classifying torsion classes for algebras with radical square zero via sign decomposition}
\author{TOSHITAKA AOKI} 
\address{
Graduate School of Mathematics, Nagoya University, Frocho, Chikusaku, Nagoya 464-8602, Japan
}
\email{m15001d@math.nagoya-u.ac.jp} 
\begin{document}

\begin{abstract}
To study the set of torsion classes of a finite dimensional basic algebra, we use a decomposition, called sign-decomposition, parametrized by elements of $\{\pm1\}^n$ where $n$ is the number of simple modules. 
If $A$ is an algebra with radical square zero, then for each $\epsilon \in \{\pm1\}^n$ 
there is a hereditary algebra $A_{\epsilon}^!$ with radical square zero and a bijection between the set of torsion classes of $A$ associated to $\epsilon$ and the set of faithful torsion classes of $A_{\epsilon}^!$. 
Furthermore, this bijection preserves the property of being functorially finite. 
As an application in $\tau$-tilting theory, we prove that the number of support $\tau$-tilting modules over Brauer line algebras (resp. Brauer odd-cycle algebras) having $n$ edges is $\binom{2n}{n}$ (resp. $2^{2n-1}$). 
\end{abstract}

\maketitle

\section{Introduction}
The importance of torsion classes in representation theory of 
finite dimensional algebras is increasing due to the recent development of $\tau$-tilting theory \cite{AIR, DIJ} and the study of Bridgeland's stability spaces on derived categories \cite{Bri}.

A tilting module plays the role of a generator (Ext-projective module) 
of the corresponding torsion class. 
This viewpoint is adopted by Adachi-Iyama-Reiten for $\tau$-tilting theory, 
which results a bijection between support $\tau$-tilting modules (or $\tau$-tilting pairs) 
and functorially finite torsion classes. 
This bijection gives a partial order on the set of functorially finite torsion classes, making them easier to study. 
Support $\tau$-tilting modules, and thereby functorially finite torsion classes, are classified for several classes of algebras \cite{Ada2, AM, EJR}. 
In contrast, it is very hard to classify non-functorially finite torsion classes in general.

This paper has two principal aims. The first is to introduce a new approach, which we call sign-decomposition,  
to the classification problem of all torsion classes.
Let $A$ be a finite dimensional basic algebra over a field $k$ having $n$ simple modules $S(1), \ldots, S(n)$ which are mutually non-isomorphic. 
Clearly, the set $\tors A$ of torsion classes of $A$ is decomposed into subsets 
$\tors_{\epsilon} A:=\{\mathcal{T} \in \tors A \mid S(i) \in \mathcal{T} \Leftrightarrow \epsilon(i)=1\}$ for all $\epsilon \in \{\pm1\}^n$. 
In Proposition \ref{restriction-epsilon}, we show 
that there is a naturally defined bijection between 
\begin{itemize}
\item $\ftors_{\epsilon} A$ the set of functorially finite torsion classes in $\tors_{\epsilon} A$.  
\item $\stau_{\epsilon} A$ the set of isomorphism classes of $\tau$-tilting pairs for $A$ whose $g$-vectors lie in 
$\{ x \in \mathbb{Z}^n \mid x_i \in \epsilon(i) \cdot \mathbb{Z}_{>0}, i=1,\ldots,n \}$.
\end{itemize}

The second aim of this paper is to classify all torsion classes for algebras $A$ with radical square zero, as an application of sign-decomposition. 
In representation theory of algebras with radical square zero, a crucial role is played by the stable equivalence between $A$ and the hereditary algebra
$
\begin{pmatrix} 
A/J_A & J_A \\
0  & A/J_A
\end{pmatrix}$
with radical square zero, where $J_A$ is the Jacobson radical of $A$. 
Using this equivalence, one can characterize when $A$ is representation finite \cite{Ga, DR}. 
In this paper, we provide a new link between algebras with radical square zero and hereditary algebras. 
To each $\epsilon \in \{\pm1\}^n$, we associate the hereditary algebra $A_{\epsilon}^!$ 
with radical square zero and show the following bijection. 
Now, we denote by $\fators A$ the set of faithful torsion classes of $A$, 
by $\tilt A$ the set of isomorphism classes of basic tilting $A$-modules.

\begin{theorem}[Theorem \ref{tors-fators} and Theorem \ref{stau-tilt-tilt}] \label{Theorem A}
Let $A$ be a finite dimensional basic algebra with radical square zero having $n$ simple modules $S(1),\ldots, S(n)$. 
For each $\epsilon \in \{\pm1\}^n$, 
there is a hereditary algebra $A_{\epsilon}^!$ with radical square zero 
and a bijection between
\begin{enumerate}
\item $\tors_{\epsilon} A$ and 
\item $\fators A_{\epsilon}^!$.
\end{enumerate} 
Moreover, it induces a bijection between
\begin{enumerate}
\item[(1)'] $\stau_{\epsilon} A$ and 
\item[(2)'] $\tilt A_{\epsilon}^!$.
\end{enumerate}
\end{theorem}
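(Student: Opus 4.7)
The plan is to construct the hereditary algebra $A_\epsilon^!$ by a sign-dependent surgery on the quiver of $A$, define an explicit transport of torsion classes between the two sides, and then upgrade to the $\tau$-tilting/tilting statement by combining Proposition~\ref{restriction-epsilon} with the classical tilting theory of hereditary algebras.

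Write $A \cong kQ/\rad^2$ for the Gabriel quiver $Q$ of $A$, and consider the separated quiver $Q^{\mathrm{sep}}$ whose vertex set is two copies of $Q_0$ (thought of as ``sources'' and ``sinks'') and whose arrows are one copy of each arrow of $Q$ going from the source-copy of its source to the sink-copy of its target. For each sign vector $\epsilon \in \{\pm1\}^n$ I would build $A_\epsilon^! := kQ_\epsilon/\rad^2$ for a subquiver $Q_\epsilon$ of $Q^{\mathrm{sep}}$ determined by $\epsilon$, which selects vertex-copies and arrows according to which simples $S(i)$ should lie in the torsion class. Since $Q_\epsilon$ is bipartite, $A_\epsilon^!$ is automatically hereditary with radical square zero.

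For the main bijection (1)$\leftrightarrow$(2), I would construct $\Phi_\epsilon: \tors_\epsilon A \to \fators A_\epsilon^!$ by sending a torsion class $\mathcal{T}$ to the torsion class in $\module A_\epsilon^!$ whose objects are re-interpretations of modules in $\mathcal{T}$ under the change of algebra $A \leadsto A_\epsilon^!$. Because $\rad^2 = 0$, $\Ext^1$-groups between simples are controlled entirely by arrows, so extension- and quotient-closure transport cleanly. The sign constraint $S(i) \in \mathcal{T} \Leftrightarrow \epsilon(i) = +1$ forces $\Phi_\epsilon(\mathcal{T})$ to contain every simple of $A_\epsilon^!$ and hence to be faithful; conversely, any faithful torsion class of $A_\epsilon^!$ lifts uniquely to a torsion class of $A$ in the prescribed sign stratum. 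The bulk of the work is verifying that these two operations are mutually inverse.

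For (1)'$\leftrightarrow$(2)', combine Proposition~\ref{restriction-epsilon} (which identifies $\stau_\epsilon A$ with $\ftors_\epsilon A$) with the classical fact that, for a hereditary algebra $H$, basic tilting modules correspond to faithful functorially finite torsion classes via $T \mapsto \Fac T$. It then suffices to show that $\Phi_\epsilon$ restricts to a bijection between $\ftors_\epsilon A$ and the faithful functorially finite torsion classes of $A_\epsilon^!$, which follows because functorial finiteness on both sides is characterized by the existence of an Ext-projective generator and $\Phi_\epsilon$ transports these generators. The main obstacle is constructing $\Phi_\epsilon$ concretely and verifying bijectivity on all torsion classes (not merely the functorially finite ones); here the radical-square-zero hypothesis is essential since it reduces the comparison of $\Ext^1$ in $\module A$ and $\module A_\epsilon^!$ to a tractable combinatorial statement about arrows of $Q_\epsilon$, which any would-be generalization beyond $\rad^2 = 0$ would have to confront.
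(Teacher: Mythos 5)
Your high-level picture (separated quiver, sign-selected bipartite subquiver $Q_\epsilon$, hence a hereditary $A_\epsilon^!$, and reduction of the $\tau$-tilting statement to classical tilting theory of hereditary algebras via Proposition~\ref{restriction-epsilon}) matches the paper's construction. But the heart of the theorem — the actual map $\tors_\epsilon A \to \fators A_\epsilon^!$ — is missing. There is no ``re-interpretation of modules under the change of algebra $A \leadsto A_\epsilon^!$'': the chosen vertices of $Q_\epsilon$ sit inside $A_\epsilon^!$ with all arrows reversed, so the two module categories are not comparable by any naive identification. The paper's route is a genuine two-step composition. First one passes to the hereditary factor algebra $A_\epsilon = A/I_\epsilon$ (killing the arrows whose signs are not $(+,-)$); the restriction $\tors_\epsilon A \to \tors_\epsilon A_\epsilon$ is bijective because for $M \in \Fac P_A^{\epsilon,+}$ the kernel of $M \to M\otimes_A A_\epsilon$ is a semisimple module in $\add S_A^{\epsilon,+}$, hence automatically belongs to every torsion class in the stratum (Lemma~\ref{mod-str}) — this is where $\rad^2=0$ does its real work, and it is absent from your outline. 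Second, one applies the reflection (APR-tilt at all sinks) functors $\mathbf{R}=\Hom(T[\mathbf{a}],-)$ and $\mathbf{L}$, which are equivalences only between the torsion part of $\module A_\epsilon$ and the torsion-free part of $\module A_\epsilon^!$; the image of a torsion class must then be completed by adjoining the class $\mathcal{Y}_\epsilon$ of simple injectives of $A_\epsilon^!$ by hand, i.e.\ $\mathcal{T} \mapsto \add(\mathbf{R}(\mathcal{T}), \mathcal{Y}_\epsilon)$, and closure under quotients and extensions has to be checked using the splitting of the induced torsion pairs.

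Your faithfulness argument is also incorrect as stated. A torsion class containing \emph{every} simple of $A_\epsilon^!$ would equal all of $\module A_\epsilon^!$ (torsion classes are extension-closed and every module has a finite composition series), which would destroy injectivity; and in fact the image $\add(\mathbf{R}(\mathcal{T}),\mathcal{Y}_\epsilon)$ does not contain the simple projectives of $A_\epsilon^!$ in general. What it contains is every indecomposable \emph{injective}: $I_{A_\epsilon^!}^{\epsilon,-}\cong S_{A_\epsilon^!}^{\epsilon,-}$ lies in the adjoined class $\mathcal{Y}_\epsilon$, while $I_{A_\epsilon^!}^{\epsilon,+}\cong \mathbf{R}(S_A^{\epsilon,+})$ comes from the sign condition $S_A^{\epsilon,+}\in\mathcal{T}$. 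Finally, for $(1)'\leftrightarrow(2)'$ the assertion that ``$\Phi_\epsilon$ transports Ext-projective generators'' needs the concrete formula $(M,Q)\mapsto \mathbf{R}(M)\oplus Q_{A_\epsilon^!}$ together with the exchange $\Hom_A(Q,M)=0 \Leftrightarrow \Ext^1_{A_\epsilon^!}(Q_{A_\epsilon^!},\mathbf{R}(M))=0$ (Lemma~\ref{radical}); without it the claim is circular. As it stands the proposal restates the theorem in the key places rather than proving it.
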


In $\tau$-tilting theory, the latter bijection is useful for analyzing the structure of support $\tau$-tilting modules. 
For example, the $g$-vector of a support $\tau$-tilting $A$-module can be computed from the dimension vector of the corresponding tilting $A_{\epsilon}^!$-module 
(Theorem \ref{transformation}). The Hasse quiver of the set of support $\tau$-tilting modules is completely described by using the Hasse quiver of $\tilt A_{\epsilon}^!$ for all $\epsilon \in \{\pm1\}^n$ (Theorem \ref{Hasse}). 
In addition, we characterize algebras with radical square zero having only finitely many isomorphism classes of support $\tau$-tilting modules in terms of their valued quivers and Dynkin diagrams (Corollary \ref{tau-tilt-finite}). 
This is a generalization of the result of Adachi \cite[Theorem 3.1]{Ada1} (and also \cite{Zha}) to non-algebraically closed fields. 
For such algebras, we can calculate the number of support 
$\tau$-tilting modules since the number of tilting modules for 
hereditary algebras of Dynkin types is already known (see for example \cite{ONFR}). 
As an application, we determine the number of support $\tau$-tilting modules for special classes of symmetric algebras, called Brauer line algebras and Brauer odd-cycle algebras, 
which originates in modular representation theory of finite groups \cite{Alp, Ben}. The result is the following.

\begin{theorem}[Theorem \ref{line} and \ref{odd cycle}] \label{BLine}
The number of isomorphism classes of basic support $\tau$-tilting modules over 
a Brauer line algebra (resp. Brauer odd-cycle algebra) having $n$ edges is $\binom{2n}{n}$ (resp. $2^{2n-1}$).

\end{theorem}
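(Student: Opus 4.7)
The strategy is to apply the second bijection of Theorem~\ref{Theorem A}, converting the counting problem into a computation over hereditary algebras of type $A$. Assuming (or first establishing) that the Brauer line and Brauer odd-cycle algebras fall within the scope of Theorem~\ref{Theorem A}, the bijection $\stau_{\epsilon} A \leftrightarrow \tilt A_{\epsilon}^!$ yields
\[
|\stau A| \;=\; \sum_{\epsilon \in \{\pm1\}^n} |\tilt A_{\epsilon}^!|.
\]
The Gabriel quiver of the Brauer line algebra is the doubled linear quiver on $n$ vertices with a pair of opposite arrows between each adjacent pair; the Brauer odd-cycle algebra has the cyclic analogue, and in both cases all paths of length two vanish.

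For the line case, I would first analyze the description of $A_{\epsilon}^!$ from the earlier construction: its Gabriel quiver keeps only those arrows $i \to j$ with $\epsilon(i)=+1$ and $\epsilon(j)=-1$, making $A_{\epsilon}^!$ hereditary. Its connected components correspond to maximal sign-alternating intervals in $\epsilon$; equivalently, encoding $\epsilon$ by the binary word $b_i = \mathbf{1}[\epsilon(i)\neq\epsilon(i+1)]$, the maximal runs of $1$'s produce components of sizes $s_1,\dots,s_k$ forming a composition of $n$, each a type-$A_{s_i}$ quiver. Using the classical count $|\tilt kA_m| = C_m = \tfrac{1}{m+1}\binom{2m}{m}$, which depends only on the Dynkin type, the sum becomes
\[
|\stau A| \;=\; 2 \sum_{k\geq1}\;\sum_{\substack{s_1+\cdots+s_k = n \\ s_i\geq 1}} \prod_{i=1}^k C_{s_i},
\]
the factor $2$ accounting for the choice of $\epsilon(1)$. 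The identity then reduces to the generating-function computation $C(x)/(1-C(x)) = \tfrac12\bigl((1-4x)^{-1/2}-1\bigr)$, where $C(x)=\sum_{m\geq1} C_m x^m$, giving $[x^n]\,C(x)/(1-C(x)) = \tfrac12\binom{2n}{n}$, hence $|\stau A| = \binom{2n}{n}$.

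For the odd-cycle case, the decomposition of $B_{\epsilon}^!$ proceeds along a cyclic sign sequence. The oddness of $n$ is essential: signs cannot alternate around the whole cycle, so there is always at least one position where consecutive signs agree; consequently the wrap-around is broken and $B_{\epsilon}^!$ is again a disjoint union of type-$A$ pieces. The resulting identity to verify is
\[
\sum_{\epsilon \in \{\pm1\}^n} \prod_{i} C_{s_i(\epsilon)} \;=\; 2^{2n-1},
\]
which I would prove by conditioning on a distinguished position where consecutive signs agree, cutting the cycle there, and reducing to a weighted line count, then summing over the choice of cut with a careful inclusion--exclusion to avoid overcounting (or, alternatively, by inducting on $n$ using the line-case generating function).

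The main obstacles are, first, matching the abstract construction of $A_{\epsilon}^!$ (resp.\ $B_{\epsilon}^!$) from earlier sections with the explicit Gabriel-quiver description above, so that the decomposition into type-$A$ components is transparent; and second, establishing the cyclic combinatorial identity without overcounting, since each cyclic sign pattern admits several equivalent cut positions. Once these two issues are resolved, the final counts $\binom{2n}{n}$ and $2^{2n-1}$ follow from the Catalan generating-function manipulation sketched above.
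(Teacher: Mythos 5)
Your overall strategy matches the paper's: both reduce to counting tilting modules over type-$\mathbb{A}$ hereditary algebras $A_\epsilon^!$ via the sign-decomposition bijection, encode the sign sequences $\epsilon$ as compositions of $n$, and sum products of Catalan numbers. Two points need attention, one a genuine gap and one a stylistic difference.

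\textbf{The radical-square-zero reduction is not optional.} Brauer line and Brauer odd-cycle algebras have radical cube zero, not radical square zero, so Theorem~\ref{Theorem A} does not apply directly. The paper first passes from $A_{\mathbb{L}_n}$ (resp.\ $A_{\Xi_n}$) to its quotient $A_n$ (resp.\ $B_n$) by the socle and invokes Proposition~\ref{reduction} (an ideal generated by central elements contained in the radical induces an isomorphism of $\stau$-posets) to conclude $\#\stau A_{\mathbb{L}_n}=\#\stau A_n$. You flag this as an assumption but do not close it; without it the argument does not get off the ground.

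\textbf{Line case.} Your generating-function evaluation
\[
\frac{C(x)}{1-C(x)}=\tfrac12\bigl((1-4x)^{-1/2}-1\bigr),\qquad [x^n]=\tfrac12\binom{2n}{n},
\]
is correct and is in fact shorter than the paper's inductive computation of $\sum_{r}P_n^r$ (Lemma~\ref{total sum}). This is a legitimate improvement in exposition, provided you first justify the factor of $2$ via $\Gamma_{-\epsilon}\cong\Gamma_\epsilon^{\rm op}$ and $\#\tilt kQ = \#\tilt kQ^{\rm op}$, as the paper does.

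\textbf{Odd-cycle case.} Here your argument is incomplete in exactly the place you suspected. The overcounting problem when cutting the cycle is real: a sign pattern $\epsilon$ has as many ``agreement positions'' as connected components of $\Gamma'_\epsilon$, and these are not equinumerous across patterns, so naive ``sum over cuts'' double-counts in a nonuniform way. The paper resolves this by conditioning on the size $t$ of the component containing vertex $1$: for fixed $t$ there are exactly $t$ sign patterns (placement of vertex $1$ within that component) when $t=n$, and for $1\le t\le n-2$ the complement is a linear array whose even-component count is given by the auxiliary Lemma~\ref{part} (namely $\sum_{r\text{ even}} P_m^r=(m-1)C_{m-1}$). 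That lemma is itself a refinement of the line-case computation and is what makes the final summation telescoping via Lemma~\ref{Catalan}(2),(3). Without an analogue of that refinement, your plan (``careful inclusion--exclusion'') has no concrete mechanism. You also need the observation that $t=n-1$ is impossible when $n$ is odd, which is what rules out the extended Dynkin $\tilde{\mathbb{A}}$ case and makes every $A_\epsilon^!$ representation-finite. Your remark that oddness forces at least one agreement position is necessary but not by itself sufficient to conclude the count.
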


In the forthcoming paper \cite{AA}, as an analog of Theorem \ref{BLine}, Adachi and the author compute the number of isomorphism classes of support $\tau$-tilting modules for an arbitrary symmetric algebra with radical cube zero over an algebraically closed field.

This paper is organized as follows. 
In Section \ref{preliminaries}, we recall a relationship between torsion classes, $\tau$-tilting modules and two-term silting complexes.
In Section \ref{hereditary algebras with RSZ}, we begin with introducing the sign-decomposition of $\tors A$, which is central in this paper, in a general setting. 
After that, we show a particular case of Theorem \ref{Theorem A}, when $A$ is a hereditary algebra with radical square zero.  
In Section \ref{algebras with RSZ}, we prove the main bijections in Theorem \ref{Theorem A} for an arbitrary algebra with radical square zero. 
This is achieved by reduction to the hereditary case in the previous section. Using this bijection, we compute $g$-vectors and the Hasse quiver of support $\tau$-tilting modules. 
In Section \ref{tau-tilting-finiteness}, we study algebras with radical square zero having only finitely many isomorphism classes of support $\tau$-tilting modules. As an application, we calculate the number of support $\tau$-tilting modules over Brauer line algebras and Brauer cycle algebras.

Throughout this paper, 
by an algebra we mean a finite dimensional basic algebra over a field $k$,  
by a module we mean a finitely generated right module.  
For a given algebra $A$, we denote by $\module A$ (resp. $\proj A$) the category of $A$-modules (resp. projective $A$-modules).  
An $A$-module $M$ is said to be {\it basic} if it is isomorphic to a direct sum of indecomposable modules which are mutually non-isomorphic.
Finally, for a given positive integer $m$, let $[m]:=\{1,\ldots, m\}$.

\ \\ \noindent
{\bf Acknowledgements.}
The author would like to express his deep gratitude to his supervisor Osamu Iyama 
for his support and advice. 
He would also like to thank Eric Darp{\"o}, Takahide Adachi and Aaron Chan for helpful comments and suggestions.

\section{Preliminaries} \label{preliminaries}
Throughout this section, let $A$ be a finite dimensional basic algebra.
We recall a relationship between torsion classes, support $\tau$-tilting modules and two-term silting complexes. 

\subsection{Torsion classes}
For a given $A$-module $M$, we denote by $\Fac M$ (respectively, $\Sub M$) 
the full subcategory of $\module A$ consisting of factor modules (respectively, submodules) of finite direct sums of copies of $M$. 

For a class $\mathcal{C}$ of $\module A$, we define its orthogonal categories by  
$$
\mathcal{C}^{\perp} := \{ X \in \module A \mid \Hom_{A} (\mathcal{C}, X)=0\}, \  
^{\perp}\mathcal{C} := \{ X \in \module A \mid \Hom_{A} (X, \mathcal{C})=0\}.   
$$
We say that a full subcategory $\mathcal{T}$ of $\module A$ is 
a {\it torsion class} (respectively, {\it torsion-free class})  
if it is closed under extensions, factor modules (respectively, submodules) and isomorphisms. 
Let $\tors A$ be the set of torsion classes of $\module A$. 
Then it forms a partially ordered set with respect to inclusion.
A torsion class $\mathcal{T}$ is said to be 
\begin{itemize}
\item {\it functorially finite} if it is of the form $\mathcal{T}=\Fac M$ for some $M \in \module A$, and 
\item {\it faithful} if it contains $DA$, 
\end{itemize}
where $D:=\Hom_k(-, k)$ denotes the $k$-dual.  
Let $\ftors A$ (respectively, $\fators A$) be the set of functorially finite (respectively, faithful) torsion classes of $\module A$.
Finally, let $\ffators A:=\ftors A \cap \fators A$. 
A {\it torsion pair} is a pair $(\mathcal{T}, \mathcal{F})$ consisting of a torsion class $\mathcal{T}$ and a torsion-free class $\mathcal{F}$ in $\module A$ satisfying 
$\mathcal{F}= \mathcal{T}^{\perp}$ and $\mathcal{T}={^\perp  \mathcal{F}}$. 
A torsion pair $(\mathcal{T}, \mathcal{F})$ is said to be {\it splitting} if each indecomposable $A$-module lies either in $\mathcal{T}$ or in $\mathcal{F}$.


\subsection{Support $\tau$-tilting modules} 
We denote by $\tau$ the Auslander-Reiten translation for $A$.
For a given $A$-module $M$, we denote the number of isomorphism classes of indecomposable direct summands of $M$ by $|M|$. 

\begin{definition}
\rm \label{def of tau-tilt}
Let $M$ be an $A$-module.  
\begin{enumerate}
\item $M$ is said to be {\it $\tau$-rigid} if $\Hom_{A}(M, \tau M)=0$. 
\item $M$ is said to be {\it $\tau$-tilting} if it is $\tau$-rigid and satisfies $|M|=|A|$. 
\item $M$ is said to be {\it support $\tau$-tilting} if there exists an idempotent $e$ of $A$ such that $M$ is a $\tau$-tilting $(A/\langle e \rangle)$-module.
\end{enumerate}
\end{definition}

From the next result, we find that support $\tau$-tilting modules is a generalization of tilting modules.
Here, we say that an $A$-module $M$ is {\it tilting} if it satisfies the following three conditions:
(i) $M$ is {\it rigid}, i.e., $\Ext^1_A(M,M)=0$, (ii) the projective dimension of $M$ is at most $1$, and (iii) $|M|=|A|$. It is known that any tilting $A$-module $M$ is {\it faithful}, that is, $DA\in \Fac M$. 

\begin{proposition} \rm \label{hereditary tilting} \cite[Proposition 2.2(b)]{AIR} The following hold.
\begin{enumerate}
\item Let $M$ be an $A$-module. If $M$ is $\tau$-rigid, then it is rigid. The converse holds if the projective dimension of $M$ is at most $1$.
\item  A tilting module is precisely a faithful support $\tau$-tilting module.
\end{enumerate}
\end{proposition}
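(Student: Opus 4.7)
For part (1), my approach exploits the construction of $\tau M$ via a minimal projective presentation $P_1 \xrightarrow{f} P_0 \to M \to 0$: set $\tau M := \Ker(\nu f)$, where $\nu = D\Hom_A(-, A)$ is the Nakayama functor. Standard adjunctions between projectives and the Nakayama functor yield, for every $N \in \module A$, the functorial exact sequence
\[
\Hom_A(P_0, N) \to \Hom_A(P_1, N) \to D\Hom_A(N, \tau M) \to 0 .
\]
Specialising to $N = M$ and comparing with the sequence obtained by applying $\Hom_A(-, M)$ to the presentation, the cokernel of $\Hom_A(P_0, M) \to \Hom_A(P_1, M)$ is simultaneously identified with $D\Hom_A(M, \tau M)$ and receives a canonical injection from $\Ext^1_A(M, M)$. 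This yields the inclusion $\Ext^1_A(M, M) \hookrightarrow D\Hom_A(M, \tau M)$, from which $\tau$-rigidity implies rigidity. When $\mathrm{pd}\, M \le 1$, minimality of the presentation forces $f$ to be injective, so the presentation is a length-one projective resolution and $\Ext^1_A(M, M)$ equals the cokernel above; the injection becomes an isomorphism, giving the converse.

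For part (2), the forward direction is immediate from (1): a tilting module $M$ satisfies $\mathrm{pd}\, M \le 1$ and $\Ext^1_A(M, M) = 0$, so it is $\tau$-rigid by (1), and combined with $|M| = |A|$ it is $\tau$-tilting; faithfulness of a tilting module was already recalled before the statement. For the converse, let $M$ be a faithful support $\tau$-tilting module, say a $\tau$-tilting $(A/\langle e \rangle)$-module. Since $DA \in \Fac M$ and $DA$ is a faithful right $A$-module, the annihilator of $M$ is zero; in particular $\langle e \rangle = 0$, so $e = 0$ and $M$ is genuinely a $\tau$-tilting $A$-module with $|M| = |A|$. Once $\mathrm{pd}\, M \le 1$ is established, part (1) supplies $\Ext^1_A(M, M) = 0$ and $M$ is tilting by definition.

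The projective-dimension bound is the main obstacle. I would carry it out by a Bongartz-type argument: $\tau$-rigidity of $M$ produces a minimal left $\add M$-approximation $\alpha \colon A \to T^0$ whose cokernel $T^1$ lies again in $\add M$, and faithfulness $DA \in \Fac M$ forces $\alpha$ to be injective, yielding the short exact sequence
\[
0 \to A \to T^0 \to T^1 \to 0 , \qquad T^0, T^1 \in \add M .
\]
From this sequence, using projectivity of $A$ together with the rigidity $\Ext^1_A(M, \add M) = 0$ obtained from (1), one extracts $\mathrm{pd}\, M \le 1$ by standard homological bookkeeping. The subtle point is producing the short exact sequence — particularly the injectivity of $\alpha$ from faithfulness and the containment of the cokernel in $\add M$ from $\tau$-rigidity; once those are in place, the subsequent dimension bound is routine.
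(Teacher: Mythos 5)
This statement is quoted from \cite[Proposition 2.2(b)]{AIR} and the paper offers no proof of its own, so your attempt can only be judged on its internal merits. Part (1) is essentially the standard argument and is fine, with one caveat: the cokernel of $(f,M)\colon\Hom_A(P_0,M)\to\Hom_A(P_1,M)$ is $D\overline{\Hom}_A(M,\tau M)$ (maps modulo those factoring through injectives), not $D\Hom_A(M,\tau M)$. This is harmless for ``$\tau$-rigid $\Rightarrow$ rigid'', and in the converse direction the hypothesis $\mathrm{pd}\,M\le 1$ forces $\Hom_A(DA,\tau M)=0$, so the two Hom-spaces coincide there; but the identification should be stated correctly. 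The reduction in part (2) from ``faithful support $\tau$-tilting'' to ``$e=0$, hence $\tau$-tilting over $A$ itself'' is also correct.

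The genuine gap is the last step. From the exact sequence $0\to A\to T^0\to T^1\to 0$ with $T^0,T^1\in\add M$ together with $\Ext^1_A(M,\add M)=0$, one \emph{cannot} conclude $\mathrm{pd}\,M\le 1$; there is no ``standard homological bookkeeping'' that does this. Concretely, let $A$ be a non-semisimple self-injective algebra and $S$ a non-projective module with $\Ext^1_A(S,S)=0$ (e.g.\ a simple module over the preprojective algebra of type $\mathbb{A}_2$). Then $M:=A\oplus S$ is faithful, rigid (all other $\Ext^1$-groups vanish because $A$ is projective-injective), and admits the coresolution $0\to A\to A\to 0\to 0$ with terms in $\add M$, yet $\mathrm{pd}\,S=\infty$. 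What saves the theorem is that such an $M$ is not $\tau$-rigid, and the correct argument must use $\tau$-rigidity against all of $\Fac M$, not just rigidity against $\add M$: since $\Hom_A(M,\tau M)=0$ and every $X\in\Fac M$ receives an epimorphism from some $M^k$, one gets $\Hom_A(X,\tau M)=0$ for all $X\in\Fac M$; as $DA\in\Fac M$ by faithfulness, this gives $\Hom_A(DA,\tau M)=0$, which is precisely the classical criterion (\cite[IV.2.14]{ASS}) for $\mathrm{pd}\,M\le 1$. With that substitution your Bongartz-style construction of the coresolution becomes unnecessary for the projective-dimension bound (it is only needed, if at all, to verify the third axiom of tilting modules, which here is replaced by $|M|=|A|$).
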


We denote by $\stau A$ (resp. $\tilt A$) the set of isomorphism classes of basic support $\tau$-tilting (resp. tilting) $A$-modules. 
From the above proposition, we regard $\tilt A$ as a subset of $\stau A$.

\begin{proposition}\cite[Proposition 2.4 and Theorem 2.10]{AIR} \label{char-tau} 
The following hold. 
\begin{enumerate}
\item Let $M \in \module A$ and $P^{-1} \overset{f}{\rightarrow} P^0 \rightarrow M \rightarrow 0$  a minimal projective presentation of $M$. 
Then $M$ is $\tau$-rigid if and only if the map
$(f, M) \colon \Hom_A (P^0, M) \rightarrow \Hom_A(P^{-1}, M)
$ is surjective.
\item 
Any $\tau$-rigid $A$-module is a direct summand of a $\tau$-tilting $A$-module. 
\end{enumerate}
\end{proposition}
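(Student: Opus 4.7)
My plan is to prove the two parts separately, relying on the Auslander-Reiten duality in both cases.

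For part (1), the strategy is a direct computation of $\Hom_A(M, \tau M)$ from the minimal projective presentation. I would identify $\tau M$ with $D\mathrm{Tr}\, M$, where the transpose is defined by $\mathrm{Tr}\, M = \coker(\Hom_A(P^0, A) \to \Hom_A(P^{-1}, A))$. Using the natural isomorphism $\Hom_A(P, A) \otimes_A M \cong \Hom_A(P, M)$ for projective $P$, together with the adjunction $\Hom_A(X, DY) \cong D(Y \otimes_A X)$, one obtains
$$\Hom_A(M, \tau M) \cong D\coker\bigl(\Hom_A(P^0, M) \xrightarrow{(f, M)} \Hom_A(P^{-1}, M)\bigr).$$
The vanishing of the left-hand side is then equivalent to surjectivity of $(f, M)$, which is exactly the statement of (1).

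For part (2), the approach is a Bongartz-type completion. Given a $\tau$-rigid module $M$, I would consider the subcategory $\mathcal{T} := {}^{\perp}(\tau M)$ of $\module A$ and proceed as follows:
(a) Verify that $\mathcal{T}$ is a torsion class containing $M$ and $A$, using the long exact sequence of $\Hom_A(-, \tau M)$ together with the identification from part (1) to guarantee closure under quotients;
(b) Construct a Bongartz complement $U$ via a universal short exact sequence of the form $0 \to A \to U \to M^{\oplus r} \to 0$, chosen so that its extension class kills $\Ext^1_A(M, A)$, and show that $T := M \oplus U$ is Ext-projective in $\mathcal{T}$;
(c) Combine part (1) with a summand count showing $|T| = |A|$, concluding that $T$ is a $\tau$-tilting module with $M$ as a direct summand.

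The main obstacle is step (c): one must rule out that the universal extension produces coinciding indecomposable summands, or summands of $U$ that duplicate summands of $M$ in a way that drops the total count below $|A|$. This is controlled by Wakamatsu's lemma and the fact that the indecomposable Ext-projective generators of a functorially finite torsion class are in bijection with the indecomposable projectives modulo those annihilated on $\mathcal{T}$, and yields precisely the required count.
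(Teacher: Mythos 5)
The paper gives no proof of this proposition: it is quoted verbatim from \cite[Proposition 2.4 and Theorem 2.10]{AIR}, so your attempt can only be measured against the standard arguments there. Your part (1) is correct and is essentially that standard argument: from the minimal presentation one has $\coker\bigl(\Hom_A(P^0,A)\to\Hom_A(P^{-1},A)\bigr)=\mathrm{Tr}\,M$, and right-exactness of $M\otimes_A-$ together with $M\otimes_A\Hom_A(P,A)\cong\Hom_A(P,M)$ and the adjunction $\Hom_A(M,DX)\cong D(M\otimes_A X)$ gives $\Hom_A(M,\tau M)\cong D\coker(f,M)$; minimality of the presentation is what ensures $D\mathrm{Tr}\,M$ is $\tau M$ with no spurious injective summands, so the equivalence follows. (Your tensor products are written on the wrong side, but that is cosmetic.)

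Part (2) follows the right strategy (the Bongartz-type completion of \cite[Theorem 2.10]{AIR}), but as written it has a false step and leaves the genuinely hard points open. First, ${}^{\perp}(\tau M)$ does \emph{not} contain $A$ unless $M$ is projective: $\Hom_A(A,\tau M)\cong\tau M$, and since a torsion class is closed under factor modules, $A\in\mathcal{T}$ would force $\mathcal{T}=\module A$ and hence $\tau M=0$. (Closure of ${}^{\perp}(\tau M)$ under quotients and extensions is, on the other hand, immediate from left exactness of $\Hom_A(-,\tau M)$ and needs no input from part (1).) Second, the real content of the completion is precisely what you skip: that the middle term $U$ of the universal extension $0\to A\to U\to M^{\oplus r}\to 0$ actually lies in $\mathcal{T}$ (i.e.\ $\Hom_A(U,\tau M)=0$ --- the long exact sequence only gives an embedding $\Hom_A(U,\tau M)\hookrightarrow\tau M$, and one must use the universality of the extension to kill it), that $M\oplus U$ is Ext-projective in $\mathcal{T}$ (which requires the Auslander--Reiten duality $\Ext^1_A(X,N)\cong D\overline{\Hom}_A(N,\tau X)$ for all $N\in\mathcal{T}$, not just $N=M$), and above all that $|M\oplus U|=|A|$. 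This last count is the substance of \cite[Theorem 2.10]{AIR}; it is not delivered by Wakamatsu's lemma, and the asserted bijection between indecomposable Ext-projectives of $\mathcal{T}$ and indecomposable projectives is itself a consequence of the theorem you are trying to prove rather than an available tool. So part (1) stands, but part (2) is an outline whose essential difficulties remain unresolved.
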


A support $\tau$-tilting module can be regarded as a certain pair of modules. 
Now, let $\add M$ be the smallest full subcategory of $\module A$ that contains an $A$-module $M$ and is closed under direct sums and direct summands.

\begin{definition} \rm 
Let $M \in\module A$ and $Q \in \proj A$. 
\begin{enumerate} 
\item A pair $(M, Q)$ is called {\it $\tau$-rigid pair} if $M$ is $\tau$-rigid and 
$\Hom_A(Q,M)=0$. 
\item $(M,Q)$ is called {\it $\tau$-tilting pair} 
(respectively, {\it almost complete $\tau$-tilting pair}) 
if it is a $\tau$-tilting pair and satisfies $|M|+|Q|=|A|$
(respectively, $|M|+|Q|=|A|-1$). 
\item We say that a pair $(M,Q)$ is {\it basic} if both $M, P$ are.
\end{enumerate}
\end{definition}

\begin{proposition} \cite[Proposition 2.3]{AIR}
For any support $\tau$-tilting module $M$, there exists $Q \in \proj A$ such that $(M,Q)$ is a $\tau$-tilting pair for $A$. 
In addition, if $(M,Q)$ and $(M,Q')$ are $\tau$-tilting pairs for $A$, 
then $\add Q = \add Q'$ holds.
\end{proposition}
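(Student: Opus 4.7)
The plan is to split into existence and uniqueness. For existence, I would take $Q$ to be the projective determined by the idempotent witnessing that $M$ is support $\tau$-tilting; for uniqueness, I would argue by contradiction, adjoining to $Q$ a missing indecomposable summand of $Q'$ to produce a $\tau$-rigid pair that violates Proposition \ref{char-tau}(2).

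For existence, choose an idempotent $e \in A$ such that $M$ is a $\tau$-tilting $(A/\langle e\rangle)$-module, and set $Q := eA$. The identities $\Hom_A(Q, M) = Me = 0$ and $|M| + |Q| = |A/\langle e\rangle| + |eA| = |A|$ are immediate from $M \in \module (A/\langle e\rangle)$. The nontrivial point is to upgrade the $\tau$-rigidity of $M$ from $A/\langle e\rangle$ to $A$. I would invoke Proposition \ref{char-tau}(1): since $Me = 0$, any projective cover $P^0 \twoheadrightarrow M$ over $A$ lies in $\add((1-e)A)$, and because the syzygy $K := \Ker(P^0 \to M)$ inherits $Ke = 0$ from the exact sequence $0 \to K \to P^0 \to M \to 0$, so does its projective cover $P^{-1} \twoheadrightarrow K$. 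Hence the minimal projective presentation of $M$ over $A$ lifts the one over $A/\langle e\rangle$, and the natural identification $\Hom_A(P^i, M) \cong \Hom_{A/\langle e\rangle}(P^i/P^i\langle e\rangle, M)$ makes the surjectivity criterion of Proposition \ref{char-tau}(1) equivalent over the two algebras.

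For uniqueness, suppose $(M,Q)$ and $(M,Q')$ are basic $\tau$-tilting pairs with $\add Q \neq \add Q'$, and pick an indecomposable projective $P$ with $P \in \add Q' \setminus \add Q$. Then $(M, Q \oplus P)$ is a $\tau$-rigid pair because $\Hom_A(P, M) \subseteq \Hom_A(Q', M) = 0$ and $M$ is still $\tau$-rigid over $A$. Writing $Q \oplus P = \tilde{e} A$, the equivalence from the existence step shows that $M$ is $\tau$-rigid over $A/\langle \tilde{e}\rangle$; applying Proposition \ref{char-tau}(2) over that quotient produces a $\tau$-tilting $A/\langle \tilde{e}\rangle$-module $M \oplus N$ containing $M$ as a summand. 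But then $|M|+|N| = |A/\langle \tilde{e}\rangle| = |A|-|Q|-1 = |M|-1$, contradicting $|N| \geq 0$.

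The main obstacle is the transfer of $\tau$-rigidity across the quotient $A \to A/\langle e\rangle$ under the hypothesis $Me = 0$. Once that equivalence is established — which boils down to tracking that both terms of the minimal projective presentation lie in $\add((1-e)A)$ — the remaining steps for both existence and uniqueness are essentially bookkeeping.
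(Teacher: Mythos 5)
First, a remark on context: the paper does not prove this statement; it is quoted verbatim from [AIR, Proposition 2.3], so there is no in-paper argument to compare with and your attempt must stand on its own. Your overall strategy (take $Q=eA$ for existence; for uniqueness adjoin a stray indecomposable summand and derive a contradiction from a count of summands via Proposition \ref{char-tau}(2)) is the standard one, but the step on which \emph{both} halves rest is wrong. You claim that the syzygy $K=\Ker(P^0\to M)$ satisfies $Ke=0$ ``from the exact sequence $0\to K\to P^0\to M\to 0$''. Applying the exact functor $\Hom_A(eA,-)=(-)e$ to that sequence gives $0\to Ke\to P^0e\to Me=0$, i.e.\ $Ke\cong P^0e$, which is nonzero in general. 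Concretely, for $A=k(1\to 2)$, $M=S(1)$ and $e=e_2$: here $P^0=P(1)$, $K=\rad P(1)\cong S(2)$ and $Ke=K\neq 0$; the minimal presentation of $S(1)$ over $A$ is $P(2)\to P(1)\to S(1)\to 0$ with $P^{-1}=eA$, whereas over $A/\langle e\rangle$ it is $0\to \overline{P}(1)\to S(1)\to 0$. So your assertion that the minimal projective presentation over $A$ ``lifts'' the one over $A/\langle e\rangle$ is false, and with it the claimed equivalence of the two surjectivity criteria of Proposition \ref{char-tau}(1).

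The statement you actually need --- for $M$ with $Me=0$, $\tau$-rigidity over $A$ is equivalent to $\tau$-rigidity over $A/\langle e\rangle$ --- is true (it is essentially [AIR, Lemma 2.1]), but proving it is the real content of the proposition, not bookkeeping. A correct argument decomposes $P^{-1}=R\oplus R'$ with $R\in\add((1-e)A)$ and $R'\in\add(eA)$, notes that $\Hom_A(R',M)=0$ so the surjectivity criterion over $A$ only involves $R$, and then verifies that $R\otimes_A(A/\langle e\rangle)\to P^0\otimes_A(A/\langle e\rangle)\to M\to 0$ is again a minimal projective presentation over the quotient (this uses that the image of $R'$ in $K$ equals $P^0\langle e\rangle$ up to $\rad K$) together with the identification $\Hom_A(R,M)\cong\Hom_{A/\langle e\rangle}(R\otimes_A(A/\langle e\rangle),M)$. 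Granting that lemma, your uniqueness argument is fine; alternatively one can bypass the second use of the lemma by quoting that any $\tau$-rigid pair $(N,P)$ satisfies $|N|+|P|\le|A|$, which makes the pair $(M,Q\oplus P)$ with $|M|+|Q|+1=|A|+1$ immediately contradictory.
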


Therefore, up to isomorphism, every basic support $\tau$-tilting module $M$ determines 
a unique projective module $Q$ such that $(M, Q)$ is a basic $\tau$-tilting pair for $A$.
In this sense, we identify a basic support $\tau$-tilting $A$-module $M$ 
with a $\tau$-tilting pair $(M,Q)$ for $A$. 
In this case, we write $(M,Q) \in \stau A$ if there are no confusion.

\subsection{Two-term silting complexes}
Let $\mathsf{C}^{\rm b}(\proj A)$ be the category of bounded complexes of $\proj A$ and 
$\Kb(\proj A)$ its homotopy category.

\begin{definition} \rm 
Let $T=(T^i, d^i)$ be a complex in $\Kb(\proj A)$. 
\begin{enumerate}
\item $T$ is said to be {\it two-term} if $T^i=0$ for all integer $i\neq -1,0$.
\item $T$ is said to be {\it presilting} if $\Hom_{\Kb(\proj A)}(T,T[i])=0$ for any integer $i>0$. 
\item $T$ is said to be {\it silting} if it is presilting and $\thick T = \Kb(\proj A)$, where 
$\thick T$ is the smallest full subcategory of $\Kb(\proj A)$ that contains $T$ and is closed under 
shifts, direct summands, mapping cones and isomorphisms.
\end{enumerate}
\end{definition}

We denote by $\twosilt A$ the set of isomorphism classes of basic two-term silting complexes for $A$. 
The following are basic properties of two-term presilting complexes. 
For a complex $T \in \Kb(\proj A)$, we denote by $|T|$ 
the number of isomorphism classes of indecomposable direct summands of $T$.

\begin{proposition}
\label{properties of two-silt}
Let $T$ be a basic two-term presilting complex in $\Kb(\proj A)$. Then the following hold. 
\begin{enumerate}
\item \cite[Proposition 2.16]{Ai} $T$ is a direct summand of a two-term silting complex for $A$. 
In particular, $T$ is silting if and only if $|T|=|A|$. 
\item \cite[Lemma 2.25]{AI} $\add T^0 \cap \add T^{-1}=0$ holds. 
\item \cite[Theorem 2.27]{AI} Assume that $T$ is silting.
Then the set of isomorphism classes of indecomposable direct summands of $T$ forms a basis of the the Grothendieck group of the triangulated category $\Kb(\proj A)$.  
\item If $T$ is silting, then each indecomposable projective $A$-module lies either in $\add T^0$ or in $\add T^{-1}$. 
\end{enumerate}
\end{proposition}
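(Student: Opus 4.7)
The plan is to prove part (4); parts (1)--(3) can be invoked as cited. By part (2), $\add T^0 \cap \add T^{-1} = 0$, so it suffices to show that every indecomposable projective $A$-module appears as a direct summand of either $T^0$ or $T^{-1}$. The main tool is part (3): when $T$ is silting, the classes of its indecomposable direct summands form a $\mathbb{Z}$-basis of $K_0(\Kb(\proj A))$.

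I would identify $K_0(\Kb(\proj A))$ with $\mathbb{Z}^n$ via the basis $\{[P(1)],\ldots,[P(n)]\}$ of classes of indecomposable projective $A$-modules, where $n=|A|$. Under this identification, any two-term complex $(X^{-1} \to X^0)$ has class $[X^0]-[X^{-1}]$, whose $i$-th coordinate vanishes whenever $P(i)$ lies in neither $\add X^0$ nor $\add X^{-1}$. Now suppose, for contradiction, that some indecomposable projective $P(i)$ lies neither in $\add T^0$ nor in $\add T^{-1}$, and write $T=\bigoplus_{j=1}^{n} T_j$ as a direct sum of indecomposable two-term complexes, the number of summands being $n$ by part (1). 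Each $T_j=(T_j^{-1} \to T_j^0)$ has $T_j^0 \in \add T^0$ and $T_j^{-1} \in \add T^{-1}$, hence $P(i) \notin \add T_j^0 \cup \add T_j^{-1}$ for every $j$. By the preceding observation, every class $[T_j]$ has vanishing $i$-th coordinate, so the collection $\{[T_j]\}_{j=1}^{n}$ lies in the rank-$(n-1)$ subgroup $\bigoplus_{k \neq i} \mathbb{Z}\cdot[P(k)]$, contradicting the basis property in part (3).

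The argument is essentially a rank comparison, and I do not expect any genuine obstacle once (1)--(3) are granted. The one small observation needed is that the indecomposable summands $T_j$ of $T$, viewed as two-term complexes, have their degree-$0$ and degree-$(-1)$ components inside $\add T^0$ and $\add T^{-1}$ respectively; this is automatic from the direct-sum decomposition together with part (2), which guarantees that $T$ is already in minimal form.
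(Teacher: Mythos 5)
Your argument is correct and is exactly the intended one: the paper's own proof of (4) consists of the single line ``It is immediate from (2) and (3),'' and your rank comparison in the Grothendieck group, using (2) to put $T$ in minimal form so that the indecomposable summands decompose componentwise, is the natural way to fill in that immediacy. (Your appeal to (1) for the count of summands is harmless but redundant, since (3) already forces the number of indecomposable summands to equal $n$.)
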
 

\begin{proof}
(4) It is immediate from (2) and (3).
\end{proof}

\begin{lemma} \label{exa}
Let $T = (T^{-1} \overset{d_T}{\rightarrow} T^0)$ and $U=(U^{-1} \overset{d_U}{\rightarrow} U^0)$ be two-term complexes in $\Kb(\proj A)$. 
Then there is an exact sequence: 
\begin{eqnarray*}
0 \longrightarrow \Hom_{\mathsf{C}^{\rm b}(\proj A)}(T,U) &\overset{\iota}{\longrightarrow}& \Hom_{A}(T^{-1}, U^{-1})
\oplus \Hom_{A}(T^{0}, U^{0}) \\ &\overset{\eta}{\longrightarrow}& \Hom_{A}(T^{-1},U^{0})
\overset{\pi}{\longrightarrow}
\Hom_{\Kb(\proj A)}(T,U[1]) \longrightarrow  0 
\end{eqnarray*}
where $\iota, \pi$ are natural injection and surjection respectively,  
and $\eta$ is defined by $\eta(h^{-1}, h^0) := h^0d_T - d_Uh^{-1}$ for all
$h^{-1}\in \Hom_{A}(T^{-1}, U^{-1})$ and $h^{0}\in \Hom_{A}(T^{0}, U^{0})$.
\end{lemma}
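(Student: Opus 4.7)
The plan is to unpack the definitions of morphisms and null-homotopies in $\Kb(\proj A)$ for two-term complexes, reducing the statement to direct diagram chasing. Since both $T$ and $U$ are concentrated in degrees $-1$ and $0$, an element of $\Hom_{\mathsf{C}^{\rm b}(\proj A)}(T,U)$ is precisely a pair $(f^{-1}, f^0) \in \Hom_A(T^{-1}, U^{-1}) \oplus \Hom_A(T^0, U^0)$ satisfying the compatibility $f^0 d_T = d_U f^{-1}$. From this I will read off that $\iota$ is tautologically injective and that its image coincides with $\Ker \eta$, since $\eta(f^{-1}, f^0) = f^0 d_T - d_U f^{-1}$ vanishes exactly on chain maps.

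The next step is to describe $\pi$ and to check surjectivity. The shifted complex $U[1]$ is concentrated in degrees $-2$ and $-1$, so any chain map $T \to U[1]$ in $\mathsf{C}^{\rm b}(\proj A)$ is determined by a single morphism $g \in \Hom_A(T^{-1}, U^0)$ in degree $-1$, with no further condition, because the component of $U[1]$ in degree $0$ is zero. Sending $g$ to the homotopy class of the corresponding chain map thus defines a surjection $\pi$.

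The final step is exactness at $\Hom_A(T^{-1}, U^0)$. A chain map $T \to U[1]$ determined by $g$ is null-homotopic precisely when there exist $s^{-1} \colon T^{-1} \to U^{-1}$ and $s^0 \colon T^0 \to U^0$ with $g = s^0 d_T - d_U s^{-1}$ (using the usual sign convention $d_{U[1]} = -d_U$), which is exactly the equation $g = \eta(s^{-1}, s^0)$; hence $\Ker \pi = \Image \eta$. The only subtlety worth monitoring is the bookkeeping of signs in the shift, together with the collapse of the outer-degree compatibility conditions forced by $T^1 = U^1 = 0$; once these are handled, the verifications are routine and I do not expect any genuine obstacle.
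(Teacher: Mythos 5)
Your proposal is correct and follows essentially the same route as the paper: identify $\Ker\eta$ with the chain-map condition $h^0 d_T = d_U h^{-1}$, observe that a chain map $T \to U[1]$ is just an unconstrained element of $\Hom_A(T^{-1},U^0)$, and match the null-homotopy equation with the image of $\eta$. Your handling of the sign convention and of the surjectivity of $\pi$ is in fact slightly more explicit than the paper's, but the argument is the same.
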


\begin{proof}

Let $h^{-1}\in \Hom_{A}(T^{-1}, U^{-1})$ and $h^{0}\in \Hom_{A}(T^{0}, U^{0})$.
Then $(h^{-1}, h^{0}) \in \Hom_{\mathsf{C}^{\rm b}(\proj A)}(T,U)$ if and only if $h^0d_T= d_Uh^{-1}$, if and only if $\eta(h^{-1}, h^{0})=0$. 
Therefore, $\Image \iota = \Ker \eta$ holds.

On the other hand, let $f\in \Hom_A(T^{-1}, U^0)$. 
Then $\pi(f) \in \Hom_{\Kb(\proj A)}(T,U[1])$ is null-homotopic if and only if 
there exist $h^{-1}\in \Hom_{A}(T^{-1}, U^{-1})$ and $h^{0}\in \Hom_{A}(T^{0}, U^{0})$ such that
$f=h^0d_T - d_Uh^0$. 
This is equivalent to the condition that $f \in \Image \eta$.
Therefore, $\Image \eta = \Ker \pi$. 
Thus, we have the assertion.
\end{proof}

As a summary of this subsection, we give the following result.

\begin{theorem} \cite[Theorem 0.5]{AIR} \label{bijection}
There is bijections between $\ftors A$, $\stau A$ and $\twosilt A$.
\end{theorem}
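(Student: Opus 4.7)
The plan is to establish the two bijections $\stau A \leftrightarrow \twosilt A$ and $\stau A \leftrightarrow \ftors A$ separately; their composition then yields the third. The first is essentially a ``projective presentation'' construction, while the second realizes a basic support $\tau$-tilting module as the Ext-projective generator of its associated torsion class $\Fac M$.

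For $\stau A \to \twosilt A$, I would send a basic $\tau$-tilting pair $(M, Q)$ with minimal projective presentation $P^{-1}_M \xrightarrow{d} P^0_M \to M \to 0$ to the two-term complex
\[
\Phi(M, Q) \;:=\; (P^{-1}_M \xrightarrow{d} P^0_M) \oplus (Q \to 0),
\]
which is basic with $|\Phi(M,Q)| = |M| + |Q| = |A|$. To verify that $\Phi(M,Q)$ is presilting, I apply Lemma~\ref{exa} to compute $\Hom_{\Kb(\proj A)}(\Phi(M,Q), \Phi(M,Q)[1])$ as the cokernel of the map $\eta$; its vanishing decomposes into surjectivity of $(d, M) \colon \Hom_A(P^0_M, M) \to \Hom_A(P^{-1}_M, M)$ and of the analogous map $\Hom_A(Q, P^{-1}_M) \to \Hom_A(Q, P^0_M)$, which translate respectively to $\tau$-rigidity of $M$ (by Proposition~\ref{char-tau}(1)) and to $\Hom_A(Q, M) = 0$, both of which hold by hypothesis. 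Proposition~\ref{properties of two-silt}(1) then lifts presilting to silting. Conversely, for $T = (T^{-1} \xrightarrow{d_T} T^0) \in \twosilt A$, Proposition~\ref{properties of two-silt}(2) and (4) allow me to split off the maximal direct summand of the form $(Q \to 0)$ as $T = T' \oplus (Q \to 0)$; then $M := H^0(T')$ has the differential of $T'$ as a minimal projective presentation, and reversing the Lemma~\ref{exa} computation shows $(M,Q) \in \stau A$. Mutual inverseness of the two assignments is immediate from the constructions.

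For $\stau A \leftrightarrow \ftors A$, I would send $(M, Q)$ to $\Fac M$ and a functorially finite torsion class $\mathcal{T}$ to the pair whose first component is the direct sum of representatives of the indecomposable Ext-projective objects of $\mathcal{T}$ and whose second component is a complementary projective chosen so that the total count of indecomposables equals $|A|$. Via Auslander-Reiten duality, $\tau$-rigidity of $M$ translates into Ext-projectivity of $M$ inside $\Fac M$, and combined with Proposition~\ref{char-tau}(2) this identifies $\Fac M$ as a torsion class whose Ext-projectives recover exactly $M$. The main obstacle is the reverse direction: proving that every functorially finite torsion class contains enough Ext-projectives to form a $\tau$-tilting pair, which requires a careful construction via minimal left $\mathcal{T}$-approximations together with a Bongartz-style completion; once this is done, mutual inverseness of the two assignments follows formally, and composing with the first bijection completes the proof.
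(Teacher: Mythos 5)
First, a remark on context: the paper does not prove this statement — it is imported verbatim from \cite[Theorem 0.5]{AIR}, and the paper only records the explicit maps in the paragraph that follows. Your maps coincide with those: $(M,Q)\mapsto (Q\oplus P^{-1}\to P^0)$ with inverse $H^0$, and $(M,Q)\mapsto \Fac M$. Your treatment of $\stau A\leftrightarrow\twosilt A$ is essentially complete and correct: decomposing $\Hom_{\Kb(\proj A)}(\Phi(M,Q),\Phi(M,Q)[1])$ via Lemma \ref{exa} into the cokernel of $(d,M)$ and the cokernel of $\Hom_A(Q,P^{-1}_M)\to\Hom_A(Q,P^0_M)$ (which equals $\Hom_A(Q,M)$ by projectivity of $Q$) is exactly the computation underlying the AIR bijection, and Proposition \ref{properties of two-silt}(1) upgrades presilting to silting. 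One point worth recording in the converse direction: that $T'$, after stripping the stalk summands $(Q\to 0)$, really is the \emph{minimal} projective presentation of $H^0(T')$ relies on $\add T'^0\cap\add T'^{-1}=0$ from Proposition \ref{properties of two-silt}(2); you gesture at this but should state it.

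The genuine gap is in the direction $\ftors A\to\stau A$, and you have named it yourself without filling it. The substance of this direction is the Auslander--Smal\o{} construction: for $\mathcal{T}=\Fac M$ functorially finite, one takes a minimal left $\mathcal{T}$-approximation $A\to X$ and shows that $P(\mathcal{T}):=X\oplus M$ is an Ext-projective generator of $\mathcal{T}$; one must then prove (i) that $P(\mathcal{T})$ is $\tau$-rigid, via the Auslander--Reiten duality characterization of Ext-projectivity in $\Fac M$, and (ii) the count $|P(\mathcal{T})|=|A|-|Q|$, where $Q$ is the maximal projective with $\Hom_A(Q,\mathcal{T})=0$. This is where the actual work lies, and ``a Bongartz-style completion'' is a placeholder rather than an argument. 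Relatedly, injectivity of $(M,Q)\mapsto\Fac M$ is not delivered by AR duality alone: you need that \emph{every} Ext-projective of $\Fac M$ already lies in $\add M$, which is deduced from the maximality of $\tau$-tilting modules among $\tau$-rigid ones (the completion statement in Proposition \ref{char-tau}(2)), not from duality. As it stands, the proposal fully establishes one of the three bijections and correctly identifies, but does not supply, the key step for the other two.
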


Here, we explain bijections in the statement. 
The map from $\stau A$ to $\ftors A$ is given 
by the correspondence $(M,Q) \mapsto \Fac M$ for any $\tau$-tilting pair $(M,Q)$ for $A$.
In addition, it induces a bijection $\tilt A \rightarrow \ffators A$ (\cite[Corollary 2.8]{AIR}). 
On the other hand, the bijection $\stau A \rightarrow \twosilt A$ is given by the correspondence $(M,Q) \mapsto T_{(M,Q)}:= (Q \oplus P^{-1} \overset{(0\ f)}{\rightarrow} P^0)$ for any $\tau$-tilting pair $(M,Q)$ for $A$, where $P^{-1} \overset{f}{\rightarrow} P^0\rightarrow M \rightarrow 0$ is a minimal projective presentation of $M$. 
Conversely, the inverse map is given by taking $0$-th cohomology $H^0(T)$ of a given two-term complex $T$.

From these bijections, $\stau A$ and $\twosilt A$ have a structure of partially ordered sets induced by $\ftors A$.

Next, we recall the definition of $g$-vectors. 
Let $\mathcal{S}(A)=\{e_1, \ldots, e_n\}$ be a complete set of primitive orthogonal idempotents of $A$. 
Let $P(i):=e_i A$ and $S(i):=P(i)/\rad P(i)$ be projective and simple $A$-modules for all $i \in \{1,\ldots,n\}$ respectively. 

\begin{definition}\rm
Let $T=(T^{-1} \overset{d_T}{\rightarrow} T^0)$ be a two-term complex in $\Kb(\proj A)$. 
Suppose that $T^{0}= \bigoplus_{i=1}^n P(i)^{m_i}$ and $T^{-1}=\bigoplus_{i=1}^n P(i)^{m_i'}$.
The {\it $g$-vector} $g_A^T$ of $T$ is defined by
$$g_A^T:= {^t(m_1-m_1', m_2-m_2', \ldots, m_n-m_n')} \in \mathbb{Z}^n.$$
We write $g^T$ for the $g$-vector of $T$ if the considering algebra is clear.
\end{definition}

In addition, the {\it g-vector} $g^M$ of an $A$-module $M$ is defined by 
that of a two-term complex $(P^{-1} \overset{f}{\rightarrow} P^0)$, where $P^{-1} \overset{f}{\rightarrow} P^0 \rightarrow M \rightarrow 0$ is a minimal projective presentation of $M$. 
If $(M,Q)$ is a $\tau$-tilting pair for $A$, then we have the equality $g^{(M,Q)}:= g^M-g^Q = g^{T_{(M,Q)}}$, where $T_{(M,Q)}$ is a two-term silting complex corresponding to $(M,Q)$.

We need the following properties of $g$-vectors.

\begin{proposition} \label{properties of g-vectors}
\begin{enumerate}
\item \cite[Corollary 6.7(a)]{DIJ} 
The map $\stau A \rightarrow \mathbb{Z}^n$ mapping 
$(M,Q) \mapsto g^{(M,Q)}$ is injective. 
\item If $(M,Q)$ is a $\tau$-tilting pair for $A$, 
then $g_i^{(M,Q)}\neq 0$ for all $i =1,\ldots,n$. 
\end{enumerate}
\end{proposition}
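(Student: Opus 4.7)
For part (1), the statement is attributed to \cite[Corollary 6.7(a)]{DIJ}, so my plan is simply to cite that reference and move on without giving an independent argument.

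The real work is part (2), and the plan is to transport the question across the bijection $\stau A \to \twosilt A$ from Theorem \ref{bijection} and then read off the coordinates of the $g$-vector directly from the shape of the two-term silting complex. Given a $\tau$-tilting pair $(M,Q)$ and a minimal projective presentation $P^{-1} \xrightarrow{f} P^0 \to M \to 0$, the associated two-term silting complex is $T = T_{(M,Q)} = (Q \oplus P^{-1} \to P^0)$, and by the equality $g^{(M,Q)} = g^{T_{(M,Q)}}$ noted after the definition of $g$-vectors, it suffices to show that every coordinate of $g^T$ is nonzero.

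Write $T^0 = \bigoplus_{i=1}^n P(i)^{m_i}$ and $T^{-1} = \bigoplus_{i=1}^n P(i)^{m_i'}$, so that $g_i^T = m_i - m_i'$. The two ingredients I will use are both already in Proposition \ref{properties of two-silt}. First, part (2) gives $\add T^0 \cap \add T^{-1} = 0$, which forces $m_i = 0$ or $m_i' = 0$ for each $i$. Second, part (4) (the consequence of parts (2) and (3) noted in the statement) says that every indecomposable projective $P(i)$ is a summand of either $T^0$ or $T^{-1}$, so at least one of $m_i, m_i'$ is strictly positive. Combining these two facts gives that exactly one of $m_i, m_i'$ is positive and the other is zero; in either case $g_i^T = m_i - m_i' \neq 0$.

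The main thing to be careful about is the logical dependence on Proposition \ref{properties of two-silt}(4), whose proof in the excerpt derives it from parts (2) and (3); once those are in hand, no further computation is needed, and the argument is essentially a bookkeeping step. I do not foresee a genuine obstacle here, since everything reduces to the standard fact that the indecomposable summands of a basic two-term silting complex give a signed basis of $K_0(\Kb(\proj A)) \cong \mathbb{Z}^n$ in which each $[P(i)]$ appears, with a well-defined sign dictated by whether $P(i)$ sits in degree $0$ or degree $-1$.
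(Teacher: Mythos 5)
Your proof is correct and takes essentially the same approach as the paper: pass to the two-term silting complex $T_{(M,Q)}$ and invoke Proposition \ref{properties of two-silt}(4). The paper's proof is terser (it cites only part (4), whose statement already encodes the disjointness once its own proof via parts (2) and (3) is unwound), whereas you make the appeal to part (2) explicit; this is a presentational difference, not a mathematical one.
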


\begin{proof}
(2) If $(M,Q)$ is a $\tau$-tilting pair for $A$, then we have 
$g^{(M,Q)}=g^{T_{(M,Q)}}$ where $T_{(M,Q)}$ is a two-term silting complex corresponding to $(M,Q)$. 
So the assertion follows from Proposition \ref{properties of two-silt}(4).
\end{proof}

\subsection{Valued quivers} \label{valued quiver}
We end this section with introduction of the notion of valued quivers of finite dimensional algebras. 

Let $A$ be an algebra and $\mathcal{S}(A)=\{e_1, \ldots, e_n\}$ a complete set of primitive orthogonal idempotents of $A$. 
For a given $A$-module $M$, we denote by $c_A^M$ the {\it dimension vector} of $M$, 
namely, $i$-th entry of $c_A^M$ is equal to the multiplicity of $S(i)$ in its composition series. 
The matrix $\mathrm{C}_A=[c_A^{P(1)} \cdots c_A^{P(n)}]$ is called {\it Cartan matrix} of $A$. 

\begin{definition} \cite{DR} \rm  \label{valued}
Suppose that $A/\rad A=\prod_{i=1}^n F_i$, where $F_i$ is a division algebra for each $i$.
Each pair $(i,j)$ determines an $F_i$-$F_j$-bimodule ${_i}N_j:= e_i(\rad A/ \rad^2 A)e_j$. 
We define a {\it valued quiver} $\Gamma_A$ of $A$ as follows: 
The set of vertices of $\Gamma_A$ is $[n]=\{1,\ldots, n\}$, and 
there is an arrow from $i$ to $j$ if ${_i}N_j \neq 0$ and 
assigning to this arrow the pair of non-negative integers 
$$(d_{ij}', d_{ij}'') = ({\rm dim} ({_i}N_j)_{F_j}, {\rm dim}_{F_i}({_i}N_j) ).$$
Note that the number $d_{ij}'$ is equal to the multiplicity of $S(j)$ in $\rad P(i)/\rad^2 P(i)$ as direct summands. 
\end{definition}

\begin{remark}\rm 
A finite quiver in usual sense can be regarded as a valued quiver by replacing $m$ arrows from $i$ to $j$ by one arrow with valuation $(m,m)$, vice versa.
\end{remark}

Now, we consider a hereditary algebra $A$.
In this case, a valued quiver $\Gamma_A$ of $A$ has no oriented cycles (see \cite[Section III.1, Lemma 1.12]{ARS}).
Due to the results of \cite[Section 2]{DR}, we can consider a simultaneously reflection 
at sinks as follows: 
Let $\mathbf{a}$ be a set of sinks of $\Gamma_A$ which are not sources. 
For each vertex $a \in \mathbf{a}$, we define a $\mathbb{Z}$-linear transformation 
$s_a \colon \mathbb{Z}^n \rightarrow \mathbb{Z}^n$, called {\it reflection} at $a$, 
by $s_a x = y$ where $x_i =y_i$ for $i\neq a$ and 
$$
y_a = -x_a + \sum_{i \rightarrow a} d_{ia}' x_i. 
$$
Let $\mathrm{S}_{\mathbf{a}}:= \prod_{a\in \mathbf{a}} s_a$. 
On the other hand, let
$$
T[\mathbf{a}] := \tau^- S_A(\mathbf{a}) \oplus \big(\bigoplus_{i \notin \mathbf{a}} P_A(i)\big)
\quad where \quad S_A(\mathbf{a}):=\bigoplus_{a \in \mathbf{a}} S_A(a),$$ 
then $T[\mathbf{a}]$ is a tilting $A$-module. 
Let $B:=\End_A(T[\mathbf{a}])$ and $\mathcal{S}(B):=\{e'_1, \ldots, e'_n\}$ a complete set of primitive orthogonal idempotents of $B$ induced by $\mathcal{S}(A)$.

\begin{proposition} \label{reflection}
In the above, the following hold.

\begin{enumerate}
\item[(a)] $B$ is a hereditary algebra, and its valued quiver $\Gamma_B$ is obtained by reversing the direction of all arrows of $\Gamma_A$ incident to $a \in \mathbf{a}$.
\item[(b)] We have $\Fac T[\mathbf{a}] =\Fac \big(\bigoplus_{i \notin \mathbf{a}} P(i)\big) = {^\perp S_A(\mathbf{a})}$ in $\module A$. 

\item [(c)]
A pair $({^\perp S_A(\mathbf{a})}, \add S_A(\mathbf{a}))$ is a splitting torsion pair in $\module A$. Similarly,  $(\add S_B(\mathbf{a}), S_B(\mathbf{a})^{\perp})$ is a splitting torsion pair in $\module B$. 
Moreover, there are equivalences of exact categories: 
$$
^\perp S_A(\mathbf{a})\ 
\begin{xy}
(0,0)="A", (13,0)="B",
\ar @<1mm> "A";"B"^{\mathbf{R}_{\mathbf{a}}}
\ar @<1mm> "B";"A"^{\mathbf{L}_{\mathbf{a}}}
\end{xy} 
\ S_B(\mathbf{a})^\perp \quad  \text{and} \quad
\add S_A(\mathbf{a}) \ 
\begin{xy}
(0,0)="A", (8,0)="B",
\ar  "A";"B"^{(-)_B}
\end{xy} 
\ \add S_B(\mathbf{a}).
$$
where $\mathbf{R}_{\mathbf{a}}:=\Hom_{A}(T[\mathbf{a}],-)$ and $\mathbf{L}_{\mathbf{a}}:=-\otimes_{B}T[\mathbf{a}]$, and there is a natural isomorphism $\Ext^1_A(T[\mathbf{a}], S_A(\mathbf{a})) \cong S_B(\mathbf{a})$.
\item[(d)] If $M \in {^\perp S_A(\mathbf{a})}$, then $c_B^{\mathbf{R}_{\mathbf{a}}(M)} = \mathrm{S}_{\mathbf{a}} \cdot c_A^{M}$. 
\end{enumerate}
\end{proposition}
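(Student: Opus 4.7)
The plan is to prove the proposition by induction on $|\mathbf{a}|$, reducing the simultaneous reflection at $\mathbf{a}$ to the classical APR-tilting at a single sink, together with the Brenner--Butler tilting theorem.

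For the base case $|\mathbf{a}|=1$, say $\mathbf{a}=\{a\}$, the module $T[\{a\}]$ is the classical APR-tilting module at the sink $a$. All four parts are standard results: (a) is the BGP reflection for valued quivers as in \cite[Section 2]{DR}; (b) follows from $\Fac T[\{a\}] = T[\{a\}]^{\perp_1} = {^\perp S_A(a)}$, using that $S_A(a)$ is the unique simple not killed by $\Ext^1_A(T[\{a\}], -)$; (c) is Brenner--Butler for $T[\{a\}]$, where the torsion-free class $\{M \in \module A \mid \Hom_A(T[\{a\}], M) = 0\}$ coincides with $\add S_A(a)$ and the splitting is automatic because $S_A(a)=P_A(a)$ is simple projective, while the identification $S_{A_1}(a) \cong \Ext^1_A(T[\{a\}], S_A(a))$ with $A_1 := \End_A(T[\{a\}])$ is built into Brenner--Butler; and (d) is the classical reflection-at-a-sink formula for dimension vectors.

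For the inductive step $|\mathbf{a}| \geq 2$, I pick any $a \in \mathbf{a}$ and first perform the single APR-tilt at $a$ to obtain $A_1 := \End_A(T[\{a\}])$. The crucial combinatorial observation is that any two distinct vertices of $\mathbf{a}$ are non-adjacent in $\Gamma_A$ (both being sinks rules out an arrow between them), so the reflection at $a$ leaves the arrows incident to each $a' \in \mathbf{a} \setminus \{a\}$ untouched. Hence every such $a'$ remains a sink of $\Gamma_{A_1}$ and is still not a source, and the inductive hypothesis applies to $A_1$ with the sink set $\mathbf{a} \setminus \{a\}$. Composing the two tilting equivalences yields all four assertions for $\mathbf{a}$.

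The main obstacle is to verify that the iteration reproduces the simultaneous tilt $T[\mathbf{a}]$. Concretely, one must show that the equivalence $\mathbf{R}_{\{a\}} \colon {^\perp S_A(a)} \to S_{A_1}(a)^\perp$ sends $S_A(a')$ to $S_{A_1}(a')$ and intertwines the Auslander--Reiten translates, so that $\mathbf{R}_{\{a\}}(\tau^-_A S_A(a')) \cong \tau^-_{A_1} S_{A_1}(a')$ for each $a' \in \mathbf{a} \setminus \{a\}$. Both reduce to comparing the almost split sequence $0 \to S_A(a') \to E_A(a') \to \tau^-_A S_A(a') \to 0$ with its analogue in $\module A_1$, using that $E_A(a')$ is a sum of projectives $P_A(i)$ for arrows $i \to a'$, none of which equals $a$ since $a$ itself is a sink. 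Once this compatibility is installed, part (d) reads off by multiplicativity: the reflections $s_a$ and $s_{a'}$ commute by non-adjacency and compose to $\mathrm{S}_{\mathbf{a}} = \prod_{a \in \mathbf{a}} s_a$, so the single-sink dimension-vector formula iterates to the claimed one.
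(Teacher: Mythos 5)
Your proposal is correct and takes essentially the same route as the paper, whose entire proof is the single sentence that the assertions follow ``by applying \cite[Theorem 2.1]{DR} iteratively'': your induction on $|\mathbf{a}|$, resting on the pairwise non-adjacency of the sinks in $\mathbf{a}$ so that each remaining vertex of $\mathbf{a}$ stays a sink after one APR-tilt, is exactly that iteration with the details (compatibility of simples, AR-translates, and dimension vectors under a single reflection) made explicit.
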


\begin{proof}
We get assertions by applying \cite[Theorem 2.1]{DR} iteratively.
\end{proof}

The following characterization is an analog of \cite{Ga}.

\begin{proposition}  \label{tilting number} \cite[Theorem(a)]{DR}
Let $A$ be a finite dimensional connected hereditary algebra and $\Gamma_A$ a valued quiver of $A$. 
Then the following conditions are equivalent. 
\begin{enumerate}
\item $A$ is representation finite. 
\item $\tilt A$ is finite.
\item The underlying graph of $\Gamma_A$ is one of Dynkin diagrams
$\mathbb{A}_n$, $\mathbb{B}_n$, $\mathbb{C}_n$, $\mathbb{D}_n$, 
$\mathbb{E}_6$, $\mathbb{E}_7$, $\mathbb{E}_8$,
$\mathbb{F}_4$ and $\mathbb{G}_2$.
\end{enumerate}
\end{proposition}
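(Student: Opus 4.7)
The plan is to establish the chain of implications (1)$\Rightarrow$(2), (3)$\Rightarrow$(1), and $\neg$(3)$\Rightarrow\neg$(2). The first is immediate: if $A$ is representation finite then $\module A$ has only finitely many indecomposables up to isomorphism, so there are only finitely many basic modules and \emph{a fortiori} only finitely many basic tilting modules. The implication (3)$\Rightarrow$(1) is the classical Gabriel theorem together with its extension to species (i.e.\ to non-algebraically-closed base fields) due to Dlab--Ringel, which I would just cite from \cite{DR}.

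The substantive part is $\neg$(3)$\Rightarrow\neg$(2): I want to produce infinitely many basic tilting $A$-modules under the assumption that the underlying graph of $\Gamma_A$ is not Dynkin. Since $A$ is hereditary, $\Gamma_A$ has no oriented cycles, so an admissible ordering of the vertices exists and reflection at sinks as in Proposition \ref{reflection} is available. Applying reflections iteratively and tracking the effect on dimension vectors via Proposition \ref{reflection}(d), one gets an action of (a power of) the Coxeter transformation $c=\mathrm{S}_n\mathrm{S}_{n-1}\cdots \mathrm{S}_1$ on $\mathbb{Z}^n$. The classical fact (part of the Dlab--Ringel picture) is that $c$ has infinite order on the positive cone exactly when the underlying graph is not Dynkin, in which case the iterated translates $\tau^{-m}P(i)$ exhaust an infinite preprojective component of $A$ consisting of pairwise non-isomorphic indecomposable modules $X_1, X_2, \ldots$

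Each $X_k$ is directing and therefore rigid, i.e.\ $\Ext_A^1(X_k, X_k)=0$. Since $A$ is hereditary (so projective dimensions are at most $1$), Proposition \ref{hereditary tilting}(1) gives that $X_k$ is $\tau$-rigid, and Proposition \ref{char-tau}(2) supplies a $\tau$-tilting module $M_k$ with $X_k$ as a direct summand. For hereditary $A$, any $\tau$-tilting module has $n$ summands and projective dimension at most $1$, and rigidity follows from $\tau$-rigidity by Proposition \ref{hereditary tilting}(1), so $M_k\in\tilt A$. Because the $X_k$ are pairwise non-isomorphic and each $M_k$ has only $n$ indecomposable summands, infinitely many of the $M_k$ must themselves be pairwise non-isomorphic, contradicting (2).

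The main obstacle is the claim that the non-Dynkin hypothesis forces the preprojective component to be infinite; this is exactly the spectral input that underpins the Gabriel--Dlab--Ringel classification and I would import it from \cite{DR} rather than reprove it. Once that input is granted, the reflection machinery of Proposition \ref{reflection} together with the $\tau$-rigid extension result of Proposition \ref{char-tau}(2) turns the infinite preprojective component into an infinite family in $\tilt A$ essentially for free.
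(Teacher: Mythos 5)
Your argument is correct, but note that the paper offers no proof at all for this proposition: it is stated as a citation of Dlab--Ringel \cite{DR}, whose Theorem~(a) covers the equivalence $(1)\Leftrightarrow(3)$ (the species/valued-quiver generalization of Gabriel's theorem); the equivalence with $(2)$ is left implicit. What you supply is therefore a genuine completion of the missing piece. Your implications $(1)\Rightarrow(2)$ and $(3)\Rightarrow(1)$ are exactly the cited content, and your contrapositive $(2)\Rightarrow(3)$ is sound: for a representation-infinite hereditary algebra the modules $\tau^{-m}P(i)$ form an infinite family of pairwise non-isomorphic indecomposables, each is rigid (there are no nonzero maps $\tau^{-m}P(i)\to\tau^{-(m-1)}P(i)$ in the preprojective component, so $\Ext^1_A(X,X)\cong D\Hom_A(X,\tau X)=0$), hence $\tau$-rigid by Proposition~\ref{hereditary tilting}(1), hence a summand of a $\tau$-tilting module by Proposition~\ref{char-tau}(2), and over a hereditary algebra a $\tau$-tilting module is rigid of projective dimension at most one with $|M|=|A|$, i.e.\ tilting in the sense defined in Section~\ref{preliminaries}; the pigeonhole step at the end is correct. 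The only inputs you import without proof --- that non-Dynkin underlying graph forces representation infiniteness and an infinite preprojective component --- are precisely the spectral facts the paper itself delegates to \cite{DR}, so your proof is consistent with the paper's level of rigor while making the tilting-finiteness equivalence explicit, which the bare citation does not.
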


In the case of Proposition \ref{tilting  number}, the number of tilting modules is given by the following table, where $C_n:=\frac{1}{n+1}\binom{2n}{n}$ is the $n$-th Catalan number (see for example  \cite{ONFR}):

{\rm
\begin{table}[h]
  \begin{tabular}{|c||c|c|c|c|c|c|c|c|c|}
\hline
    $\Gamma_A$ & $\mathbb{A}_n$ & $\mathbb{B}_n, \mathbb{C}_n$ & $\mathbb{D}_n$ & 
$\mathbb{E}_6$ & $\mathbb{E}_7$ & $\mathbb{E}_8$ &
$\mathbb{F}_4$ & $\mathbb{G}_2$ \\ \hline
 $\# \tilt A$  & $C_n$& $\binom{2n-1}{n-1}$& $\frac{3n-4}{2n-2}\binom{2n-2}{n-2}$& 418 & 2431 & 17342 & 66& 5  \\ \hline
  \end{tabular}
\end{table}
}

\section{Hereditary algebras with radical square zero} \label{hereditary algebras with RSZ}

We begin this section with introducing the sign-decomposition of the set of torsion classes of a given finite dimensional basic algebra. 
After that, we study the set of torsion classes of hereditary algebras with radical square zero via this decomposition. 
Now, we naturally identify a map $\epsilon \colon [n] \rightarrow \{\pm1\}$ with $n$ tuple $(\epsilon(1),\ldots, \epsilon(n)) \in \{\pm1\}^n$.

\subsection{Sign-decomposition}
Let $A$ be a finite dimensional algebra and $\mathcal{S}(A)=\{e_1,\ldots, e_n\}$ a complete set of primitive orthogonal idempotents of $A$. 
Let $\epsilon \colon [n] \rightarrow \{\pm1\}$ be a map. 
For each $\sigma \in \{\pm\}$, let $e^{\epsilon, \sigma}:=\sum_{i \in \epsilon^{-1}(\sigma)} e_i\in A$, and let $P_A^{\epsilon, \sigma} := e^{\epsilon, \sigma} A$ and $S_A^{\epsilon, \sigma} := P_A^{\epsilon, \sigma}/ \rad P_A^{\epsilon, \sigma}$ be projective and semisimple $A$-modules respectively. Similarly, let $I_A^{\epsilon, \sigma}$ be an injective $A$-module corresponding to $e^{\epsilon,\sigma}$.
Note that $P_A^{\epsilon, \sigma}$ (resp. $S_A^{\epsilon, \sigma}$) is isomorphic to a direct sum of indecomposable projective modules $P(i)$ (resp. simple modules $S(i)$) for all $i \in \epsilon^{-1}(\sigma)$.

For a given map $\epsilon \colon [n] \rightarrow \{\pm1\}$, let 
$$
\tors_{\epsilon} A := \{\mathcal{T} \in \tors A \mid S(i) \in \mathcal{T} \Leftrightarrow \epsilon(i)=1\}. 
$$
Clearly, $\tors A$ is decomposed into these $2^n$ subsets, which we call sign-decomposition of $\tors A$. 
From this decomposition, we can assign a map $\epsilon$ to each torsion class of $\module A$.
On the other hand, it is easy to see that $\tors_{\epsilon} A$ coincides with an interval 
$[\Filt S_A^{\epsilon, +}, \Fac P_A^{\epsilon, +}]_{\subseteq}$ in the partially ordered set $\tors A$ for every $\epsilon$, where 
$\Filt S_A^{\epsilon, +}$ is the full subcategory of $\module A$ consisting of all $A$-modules filtered by $S_A^{\epsilon, +}$.

First observation is that our signature is compatible with taking factor algebras.
Let $B$ be a factor algebra of $A$. 
Assume that the induced map $\mathcal{S}(A) \rightarrow \mathcal{S}(B)$ is a bijection. 
In this situation, we always consider the set $\tors_{\epsilon} B$ with respect to 
the index $e'_1,\ldots, e_n'$, where $e'_i$ is an element in  $\mathcal{S}(B)$ corresponding to $e_i$ for every $i \in [n]$.
On the other hand, we can naturally regard $\module B$ as a full subcategory of $\module A$. For a given full subcategory $\mathcal{T}$ of $\module A$, let $\overline{\mathcal{T}}:=\mathcal{T} \cap \module B$. 

\begin{proposition} \label{sign and factor}
Let $B$ be a factor algebra of $A$ such that the induced map $\mathcal{S}(A) \rightarrow \mathcal{S}(B)$ is a bijection. 
Then we have a commutative diagram
$$
\begin{xy}
(0,0)*+{\tors A}="1", (50,0)*+{\tors B}="2", 
(25, -13)*+{\{ \pm1\}^n}="3", 
(25, -5)*={\circlearrowright},
{"1" \ar^{\overline{(-)}} "2"},
{"2" \ar^{\rm sign} "3"},  
{"1" \ar_{\rm sign} "3"}, 
\end{xy}
$$
\end{proposition}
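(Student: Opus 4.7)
The plan is to verify the commutativity pointwise. Given $\mathcal{T} \in \tors A$, we need (i) that the intersection $\overline{\mathcal{T}} = \mathcal{T} \cap \module B$ is indeed a torsion class of $\module B$ so that the right-hand arrow makes sense, and (ii) that the signature of $\overline{\mathcal{T}}$ coincides with the signature of $\mathcal{T}$.

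For (i), the plan is to view $\module B$ as the full subcategory of $\module A$ consisting of modules annihilated by $I := \ker(A \twoheadrightarrow B)$. This subcategory is closed under submodules, quotients and extensions in $\module A$, and a short exact sequence in $\module B$ is exactly a short exact sequence in $\module A$ whose terms lie in $\module B$. Hence $\overline{\mathcal{T}}$ inherits closure under extensions and quotients as a subcategory of $\module B$ directly from the corresponding closure properties of $\mathcal{T}$, giving $\overline{\mathcal{T}} \in \tors B$.

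For (ii), the key step is the correct identification of simple modules. Since $A$ and $B$ are basic and the induced map $\mathcal{S}(A) \to \mathcal{S}(B)$ is a bijection, the kernel $I$ must lie in $\rad A$ (otherwise $B/\rad B$ would have strictly fewer simple factors than $A/\rad A$). Writing $e_i \in \mathcal{S}(A)$ and $e_i' \in \mathcal{S}(B)$ for corresponding primitive idempotents, the pullback along $A \twoheadrightarrow B$ identifies $S_B(i) = e_i' B / e_i' \rad B$ with the $A$-module $S_A(i) = e_i A / e_i \rad A$ inside the embedded copy of $\module B$ in $\module A$. Unwinding the definition of the sign map,
$$\mathrm{sign}(\overline{\mathcal{T}})(i) = +1 \iff S_B(i) \in \overline{\mathcal{T}} \iff S_A(i) \in \mathcal{T} \iff \mathrm{sign}(\mathcal{T})(i) = +1,$$
for every $i \in [n]$, which is exactly the commutativity required.

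There is no genuine obstacle in the argument; it is essentially categorical bookkeeping. The only substantive point is the matching of simple $A$- and $B$-modules, which is precisely what the bijection hypothesis $\mathcal{S}(A) \to \mathcal{S}(B)$ is designed to guarantee, and without it the two sign maps would not even land in the same set.
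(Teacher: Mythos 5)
Your proof is essentially the same as the paper's: the paper simply cites \cite[Proposition 5.7]{DIRRT} for the well-definedness of $\overline{(-)}\colon\tors A\to\tors B$ where you reprove it directly, and the sign-matching observation via the identification of $S_A(i)$ with $S_B(i)$ is the same. One caveat: your assertion that $\module B$ is closed under extensions inside $\module A$ is false in general --- if $0\to X\to Y\to Z\to 0$ is exact in $\module A$ with $XI=ZI=0$ for $I=\ker(A\twoheadrightarrow B)$, one only gets $YI^2=0$, not $YI=0$. Fortunately you never use this; what you actually invoke is the (correct) fact that a short exact sequence in $\module B$ is a short exact sequence in $\module A$ all of whose terms lie in $\module B$, and that suffices to transfer closure under quotients and extensions from $\mathcal{T}$ to $\overline{\mathcal{T}}$. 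You should just delete the clause about extension-closure of $\module B$ in $\module A$.
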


\begin{proof}
By \cite[Proposition 5.7]{DIRRT}, we have the map $\overline{(-)} \colon \tors A \rightarrow \tors B$. 
Then the assertion follows from the fact that, for a given torsion class $\mathcal{T}$ of $\module A$, $S(i) \in \mathcal{T}$ if and only if $S(i) \in \overline{\mathcal{T}}$ for any $i \in [n]$. 
\end{proof}

For each map $\epsilon \colon [n] \rightarrow \{\pm1\}$, let 
\begin{itemize}
\item $\ftors_{\epsilon} A := \tors_{\epsilon} A \cap \ftors A$, 
\item $\stau_{\epsilon} A := \{ (M,Q) \in \stau A \mid \text{
$g^{(M, Q)}_i \in \epsilon(i) \cdot \mathbb{Z}_{>0}$ for all $i \in [n]$}\}$,
\item $\twosilt_{\epsilon} A :=\{T \in \twosilt A \mid 
\text{$g^T_i \in \epsilon(i) \cdot \mathbb{Z}_{>0}$ for all $i\in[n]$}\}$.
\end{itemize}

By Proposition \ref{properties of g-vectors}(2), $\ftors A$ (resp. $\stau A$, $\twosilt A$) is a disjoint union of these $2^n$ subsets respectively. 
The following result is basic but important. 

\begin{proposition} \label{restriction-epsilon}
For each map $\epsilon \colon [n] \rightarrow \{\pm1\}$, bijections in Theorem \ref{bijection}  give isomorphisms of partially ordered sets among 
$\ftors_{\epsilon} A$, $\stau_{\epsilon} A$ and $\twosilt_{\epsilon} A$.
\end{proposition}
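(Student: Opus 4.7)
The plan is to leverage Theorem \ref{bijection}, which already provides isomorphisms of posets $\ftors A \cong \stau A \cong \twosilt A$. Since $\ftors_\epsilon A$, $\stau_\epsilon A$ and $\twosilt_\epsilon A$ are defined by restriction, it suffices to show that these particular bijections carry the $\epsilon$-subsets onto one another; the poset structure then restricts automatically.

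First I would dispatch the easier bijection $\stau_\epsilon A \leftrightarrow \twosilt_\epsilon A$. The correspondence $(M,Q) \mapsto T_{(M,Q)} = (Q \oplus P^{-1} \xrightarrow{(0\ f)} P^0)$ described after Theorem \ref{bijection} satisfies $g^{(M,Q)} = g^{T_{(M,Q)}}$ by definition (this is explicitly recorded in the paragraph introducing $g$-vectors). Hence membership in $\stau_\epsilon A$ and $\twosilt_\epsilon A$ are characterised by exactly the same sign condition on the $g$-vector, and the restriction is immediate.

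The substantive step is the bijection $\ftors_\epsilon A \leftrightarrow \stau_\epsilon A$. Here, given $(M,Q) \in \stau A$ sent to $\Fac M$, I want to verify the equivalence
\[
\Fac M \in \tors_\epsilon A \iff (M,Q) \in \stau_\epsilon A,
\]
which by the definitions reduces to proving, for each $i \in [n]$, that $S(i) \in \Fac M$ if and only if $g^{(M,Q)}_i > 0$. The argument I have in mind proceeds in two observations. First, since $S(i)$ is simple, $S(i) \in \Fac M$ is equivalent to $S(i)$ being a quotient of $M$, hence to $P(i)$ occurring as a summand of the projective cover $P^0$ of $M$. Second, by construction $T^0_{(M,Q)} = P^0$ and $T^{-1}_{(M,Q)} = P^{-1} \oplus Q$, so Proposition \ref{properties of two-silt}(4) applied to the silting complex $T_{(M,Q)}$ tells us that $P(i)$ lies in exactly one of $\add T^0$ or $\add T^{-1}$. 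In the former case the $i$-th coordinate of the $g$-vector is strictly positive, in the latter case strictly negative; together with Proposition \ref{properties of g-vectors}(2), which excludes the value $0$, this yields the required equivalence.

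The main obstacle is really this last lemma linking the top of $M$ to the sign of $g^{(M,Q)}_i$; everything else is bookkeeping. Once it is established, the bijection $\ftors A \to \stau A$ of Theorem \ref{bijection} restricts to $\ftors_\epsilon A \to \stau_\epsilon A$, and combined with the previous paragraph we obtain restricted bijections among all three sets. The poset structure on $\stau A$ and $\twosilt A$ is defined by transporting that of $\ftors A$, so the restricted bijections are automatically isomorphisms of partially ordered sets.
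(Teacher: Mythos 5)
Your proposal is correct and follows essentially the same route as the paper: both reduce the $\stau_\epsilon A \leftrightarrow \twosilt_\epsilon A$ step to the equality $g^{(M,Q)}=g^{T_{(M,Q)}}$, and both use Proposition \ref{properties of two-silt}(4) to translate the sign of $g^{(M,Q)}_i$ into whether $P(i)$ is a summand of the projective cover $P^0$, hence whether $S(i)\in\Fac M$. The only cosmetic difference is that you prove the biconditional directly, while the paper proves one implication and then concludes by the fact that both $\stau A$ and $\ftors A$ are partitioned by the sign-decomposition.
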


\begin{proof}
Let $\epsilon \colon [n] \rightarrow \{\pm1\}$ be a map. 
By the definition of $g$-vectors, we obtain a bijection between
$\stau_{\epsilon} A$ and $\twosilt_{\epsilon} A$. 

Next, we show that $\Fac M \in \ftors_{\epsilon} A$ for any $\tau$-tilting pair $(M,Q)$ in $\stau_{\epsilon} A$. 
Let $(M,P)$ be a $\tau$-tilting pair for $A$ and $P^{-1} \rightarrow P^0 \rightarrow M \rightarrow 0$ a minimal projective presentation of $M$. 
If $(M,Q) \in \stau_{\epsilon} A$, then $\add P^0 = \add P_A^{\epsilon,+}$ and $\add (Q\oplus P^{-1}) = \add P_A^{\epsilon,-}$ holds. 
So, we have $S_A^{\epsilon,+} \in \Fac M$ and $M \in \Fac P_A^{\epsilon,+}$. 
It implies that $\Fac M \in \ftors_{\epsilon}A$. 

Since $\stau A$ is decomposed into $\stau_{\epsilon} A$ for all $\epsilon \colon [n] \rightarrow \{\pm1\}$, we complete the proof.
\end{proof}

In considering two-term complexes with signatures, 
the following notations are useful: For a given map 
$\epsilon \colon [n] \rightarrow \{\pm1\}$, let $\Ktwo(\proj A)$ be the full subcategory of 
$\Kb(\proj A)$ consisting of complexes isomorphic to a two-term complex  
$T=(T^{-1}\rightarrow T^0)$ such that 
$T^{0} \in \add P_A^{\epsilon, +}$ and $T^{-1} \in \add P_A^{\epsilon, -}$, 
in other words, $g^T \in \{x \in \mathbb{Z}^n\mid 
\text{$x_i \in \epsilon(i)\cdot \mathbb{Z}_{\geq 0}$ for all $i\in [n]$}\}$.

\begin{proposition}
For each map $\epsilon \colon [n] \rightarrow \{\pm1\}$, the following hold.
\begin{enumerate}
\item Let $V\rightarrow U \rightarrow T \rightarrow V[1]$ be a triangle in $\Kb(\proj A)$. If $T, V \in \Ktwo(\proj A)$, then $U \in \Ktwo(\proj A)$. 
\item Any basic complex $T \in \Ktwo(\proj A)$ satisfies $\add T^{0} \cap \add T^{-1}=0$. 
\item $\twosilt_{\epsilon} A = \twosilt A \cap \Ktwo(\proj A)$.
\end{enumerate}
\end{proposition}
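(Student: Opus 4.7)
My plan is to address the three parts in order, using the explicit description of two-term complexes and their morphisms.

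For (1), the strategy is to represent the triangle concretely and compute a mapping cone. Choosing representatives $T=(T^{-1}\to T^0)$ and $V=(V^{-1}\to V^0)$ as in the definition of $\Ktwo(\proj A)$, I observe that since $T$ lives in degrees $-1,0$ and $V[1]$ in degrees $-2,-1$, any morphism $\alpha\colon T\to V[1]$ in $\Kb(\proj A)$ is represented by a chain map whose only non-zero component is a map $h\colon T^{-1}\to V^0$. Rotating the given triangle gives $U\cong\mathrm{cone}(\alpha)[-1]$ in $\Kb(\proj A)$, and a direct computation of the cone will yield
$$U\;\cong\;\Bigl(T^{-1}\oplus V^{-1}\xrightarrow{\left(\begin{smallmatrix} d_T & 0\\ -h & d_V \end{smallmatrix}\right)} T^0\oplus V^0\Bigr),$$
whose components lie in $\add P_A^{\epsilon,-}$ and $\add P_A^{\epsilon,+}$ respectively, so $U\in\Ktwo(\proj A)$.

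For (2), I note that any $T\in\Ktwo(\proj A)$ admits, by definition, a representative with $T^0\in\add P_A^{\epsilon,+}$ and $T^{-1}\in\add P_A^{\epsilon,-}$. Since the index sets $\epsilon^{-1}(+)$ and $\epsilon^{-1}(-)$ partition $[n]$, the additive subcategories $\add P_A^{\epsilon,+}$ and $\add P_A^{\epsilon,-}$ of $\proj A$ intersect only in $0$, whence $\add T^0\cap\add T^{-1}=0$.

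For (3), I will verify both inclusions by analysing $g$-vectors. Writing $T^0=\bigoplus_{i}P(i)^{m_i}$ and $T^{-1}=\bigoplus_{i}P(i)^{m_i'}$ gives $g^T_i=m_i-m_i'$. If $T\in\twosilt_\epsilon A$, Proposition~\ref{properties of two-silt}(4) forces $m_i=0$ or $m_i'=0$ for each $i$; combined with the sign condition $g^T_i\in\epsilon(i)\cdot\mathbb{Z}_{>0}$, this yields $m_i'=0$ for $i\in\epsilon^{-1}(+)$ and $m_i=0$ for $i\in\epsilon^{-1}(-)$, hence $T\in\Ktwo(\proj A)$. Conversely, if $T\in\twosilt A\cap\Ktwo(\proj A)$, the representative in $\Ktwo(\proj A)$ already forces $m_i'=0$ for $i\in\epsilon^{-1}(+)$ and $m_i=0$ for $i\in\epsilon^{-1}(-)$; Proposition~\ref{properties of g-vectors}(2), applied via the $\tau$-tilting pair corresponding to $T$, ensures $g^T_i\neq 0$, whence $g^T_i\in\epsilon(i)\cdot\mathbb{Z}_{>0}$.

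I do not foresee a genuine obstacle: the proposition is essentially a bookkeeping check that the sign decomposition interacts well with mapping cones and two-term silting complexes. The only point that requires a modicum of care is (1), where I must choose convenient representatives of $T,V$ and verify explicitly that the mapping cone of the corresponding chain map again lies (up to isomorphism in $\Kb(\proj A)$) in $\Ktwo(\proj A)$.
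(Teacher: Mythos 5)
Your proposal is correct and follows essentially the same route as the paper: for (1) the paper likewise realizes $U$ as the mapping cone of $T[-1]\to V$ and reads off $U^0\cong T^0\oplus V^0$, $U^{-1}\cong T^{-1}\oplus V^{-1}$, while (2) and (3) are dismissed as clear there and amount to exactly the bookkeeping you spell out. One small slip: in (3) the fact that $m_i=0$ or $m_i'=0$ for each $i$ comes from Proposition \ref{properties of two-silt}(2) (i.e.\ $\add T^0\cap\add T^{-1}=0$), not from part (4), which instead says that not both vanish.
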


\begin{proof}
(1) Let $V \rightarrow U \rightarrow T \rightarrow V[1]$ be a triangle in $\Kb(\proj A)$ with two-term complexes $T=(T^{-1} \overset{d_T}{\rightarrow} T^{0})$ and $V = (V^{-1} \overset{d_V}{\rightarrow} U^{0})$. 
Up to isomorphism, we can assume that both differential $d_T, d_V$ are in the 
radical of the category $\proj A$.
Since $U$ is given by the mapping cone of $T[-1] \rightarrow V$, 
the underlying modules $U^0$ and $U^{-1}$ are isomorphic to 
$T^{0} \oplus V^{0}$ and $T^{-1}\oplus V^{-1}$ respectively. 
So, $T,V \in \Ktwo(\proj A)$ implies $U\in \Ktwo(\proj A)$.
The assertion (2) and (3) is clear. 
\end{proof}

\subsection{Hereditary algebras with radical square zero} \label{torsion class for hereditary}
In the rest of this section, we consider a finite dimensional hereditary algebra with radical square zero. Here, we say that $A$ is an algebra with radical square zero if the square of the Jacobson radical $J_A$ of $A$ is $0$.

Let $A$ be a connected hereditary algebra with radical square zero and $\mathcal{S}(A)=\{e_1,\ldots, e_n\}$ a complete set of primitive orthogonal idempotents of $A$.  
If $A$ is non-simple, then the valued quiver $\Gamma_A$ of $A$ is connected and {\it bipartite}, that is, every vertex of $\Gamma_A$ is either a sink or a source. 
In fact, if there exists a path $i \rightarrow j \rightarrow k$ of length $2$ in $\Gamma_A$, 
it induces a monomorphism from $P(k)$ to $P(i) $ whose image is contained in the square of the radical of $P(i)$; it contradicts to the assumption that $A$ is an algebra with radical square zero.
So, we can define a map $\epsilon_A \colon [n] \rightarrow \{\pm1\}$ by $\epsilon_A(i)=1$ if $i$ is a source and $\epsilon_A(i)=-1$ if $i$ is a sink. 
To the map $\epsilon_A$, we associate a semisimple algebra $e^{\epsilon_A,\sigma} (A/J_A) e^{\epsilon_A,\sigma}$ for each $\sigma \in \{\pm\}$.
The Jacobson radical $J_A$ of $A$ has a natural structure of  
$(e^{\epsilon_A,+} (A/J_A) e^{\epsilon_A,+})$-$(e^{\epsilon_A, -} (A/J_A) e^{\epsilon_A, -})$-bimodule. 
Since we have 
$e^{\epsilon_A, -} (A/J_A) e^{\epsilon_A, +}=0$, $A$ is isomorphic to the following upper triangular matrix algebra (see \cite[Section III.2]{ARS} for detail):
\begin{equation} \label{A as upper matrix algebra}
A \cong \begin{pmatrix} 
e^{\epsilon_A,+} (A/J_A) e^{\epsilon_A,+}  & J_A \\ 
0 & e^{\epsilon_A, -} (A/J_A) e^{\epsilon_A, -}
\end{pmatrix}.
\end{equation}

From the above description, we find that each simple module is either projective or injective. 

\begin{lemma} \label{simples}
$S_A^{\epsilon_A,-}$ (resp. $S_A^{\epsilon_A,+}$) is a semisimple projective (resp. injective) $A$-module that is isomorphic 
to $P_A^{\epsilon_A, -}$ (resp. $I_{A}^{\epsilon_A,+}$). 
\end{lemma}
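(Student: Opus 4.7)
The plan is to exploit the bipartite structure of $\Gamma_A$ together with radical square zero to read off the radicals of indecomposable projectives and the socles of indecomposable injectives directly from the valued quiver.

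First I would handle $S_A^{\epsilon_A,-}$. Fix $i \in \epsilon_A^{-1}(-)$, so $i$ is a sink of $\Gamma_A$. Since $A$ has radical square zero, $\rad P(i) = \rad P(i)/\rad^2 P(i)$ is semisimple, and by Definition \ref{valued} its multiplicity at $S(j)$ equals $d_{ij}'$, which is nonzero exactly when there is an arrow $i \to j$ in $\Gamma_A$. Because $i$ is a sink, no such arrow exists, so $\rad P(i)=0$, i.e.\ $P(i)=S(i)$. Summing over $i \in \epsilon_A^{-1}(-)$ yields $P_A^{\epsilon_A,-} \cong S_A^{\epsilon_A,-}$, and this module is projective by construction.

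Next I would handle $S_A^{\epsilon_A,+}$ by the dual argument applied to $A^{\rm op}$, which is again hereditary with radical square zero and whose valued quiver is obtained from $\Gamma_A$ by reversing all arrows; sources of $\Gamma_A$ become sinks of $\Gamma_{A^{\rm op}}$. Applying the $k$-dual $D = \Hom_k(-,k)$ converts indecomposable projective $A^{\rm op}$-modules into indecomposable injective $A$-modules and preserves simples. For $i \in \epsilon_A^{-1}(+)$, the previous paragraph applied to $A^{\rm op}$ gives that the indecomposable projective $A^{\rm op}$-module at $i$ is simple, hence $I(i)=S(i)$. Summing, $I_A^{\epsilon_A,+}\cong S_A^{\epsilon_A,+}$, and this module is injective by construction.

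There is no real obstacle here; the content is entirely the observation that in the bipartite case, sinks carry no outgoing arrows and sources carry no incoming arrows, which under the radical square zero hypothesis forces projectives at sinks (respectively injectives at sources) to be simple. The only mild point to check is that the assignment $i \mapsto P(i)$ interacts correctly with the bimodule decomposition in \eqref{A as upper matrix algebra}: reading off the first column of that matrix recovers $\rad P(i) = 0$ for $i$ a sink, and reading the second row dually recovers $\soc I(i)=S(i)$ for $i$ a source, confirming both halves of the lemma simultaneously.
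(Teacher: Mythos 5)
Your proof is correct. Where the paper disposes of the lemma in one line by citing \cite[Section III.2, Proposition 2.5(b) and (c)]{ARS} applied to the triangular matrix presentation (\ref{A as upper matrix algebra}), you verify the same facts directly: a sink $i$ of the bipartite quiver $\Gamma_A$ has no outgoing arrows, so $\rad P(i)/\rad^2 P(i)=0$ and hence $\rad P(i)=0$ by Nakayama, giving $P(i)=S(i)$; and the source case follows by applying this to $A^{\rm op}$ (again radical square zero, with $\Gamma_{A^{\rm op}}$ the reversed quiver) and dualizing, since $D$ sends $A^{\rm op}$-projectives to injective $A$-modules and preserves simples. This is the same structural content as the cited result, just made self-contained; the only imprecision is the closing remark that the dual half amounts to $\soc I(i)=S(i)$ (which holds for any indecomposable injective) — the claim actually needed is $I(i)=S(i)$, but your duality argument already establishes exactly that, so nothing is missing.
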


\begin{proof}
It is just \cite[Section III.2, Proposition 2.5(b) and (c)]{ARS}.
\end{proof}

Next, we consider a simultaneously reflection at all sinks of $\Gamma_A$.  
Note that the set $\epsilon_A^{-1}(-1)$ coincides with the set of sinks in $\Gamma_A$.
So, the associated tilting module to $\epsilon_A^{-1}(-1)$ is given by 
$T[\epsilon_A^{-1}(-1)]= \tau^{-1}S_A^{\epsilon_A, -} \oplus P_A^{\epsilon_A, +}$.
By Proposition \ref{reflection}, 
$\End_A(T[\epsilon_A^{-1}(-1)])$ is again a hereditary algebra with radical square zero, 
and its valued quiver is obtained by reversing all arrows of $\Gamma_A$. 
From the similar reason as (\ref{A as upper matrix algebra}), we have the following expression as an upper triangular matrix algebra:
\begin{equation} \label{A! as upper matrix algebra}
\End_{A}(T[\epsilon_A^{-1}(-1)]) \cong 
\begin{pmatrix} 
e^{\epsilon_A, -} (A/J_A) e^{\epsilon_A, -}  & DJ_A \\ 
0 & e^{\epsilon_A, +} (A/J_A) e^{\epsilon_A, +}.
\end{pmatrix}
\end{equation}
We write this upper triangular matrix algebra by $A^!$. 
Let $\mathcal{S}(A^!):=\{e'_1,\ldots, e'_n\}$ be a complete set of primitive orthogonal idempotents of $A^!$ corresponding to $\mathcal{S}(A)$. 

In the above, let 
$(\mathcal{T}_{\epsilon_A}, \mathcal{F}_{\epsilon_A}) := 
({^\perp (S_A^{\epsilon_A, -})}, \add S_A^{\epsilon_A, -})$ and 
$(\mathcal{Y}_{\epsilon_A}, \mathcal{X}_{\epsilon_A}) :=(\add S_{A^!}^{\epsilon_A,-}, (S^{\epsilon_A, -}_{A^!})^{\perp})$ 
be torsion pairs in $\module A$ and in $\module A^!$ associated to $T[\epsilon_A^{-1}(-1)]$ respectively. 
By Proposition \ref{reflection}, there are equivalences of exact categories:
$$
\mathcal{T}_{\epsilon_A} \ 
\begin{xy}
{(0,0) \ar@<1mm>^{\mathbf{R}} (16,0)}, 
{(16,0) \ar@<1mm>^{\mathbf{L}} (0,0)}, 
\end{xy}
\ \mathcal{X}_{\epsilon_A} \quad and \quad 
\mathcal{F}_{\epsilon_A} \
\begin{xy}
{(0,0) \ar^{(-)_{A^!}} (12,0)}, 
\end{xy} \ 
\mathcal{Y}_{\epsilon_A}
$$
where $\mathbf{R}:=\mathbf{R}_{\epsilon_A^{-1}(-1)}, \mathbf{L}:=\mathbf{L}_{\epsilon_A^{-1}(-1)}$. 


On the other hand, if $A$ is simple, then $\Gamma_A$ consists of a vertex $\{1\}$ and no arrows. Let $\epsilon_A$ be one of maps from $\{1\}$ to $\{\pm1\}$. 
For both cases, let $A^!:= \End_A (A)$ and let $\mathbf{R}:= \Hom_A(A,-), \mathbf{L}:=-\otimes_A A$.
If $\epsilon(1)=1$, the associated torsion pairs are defined by $(\mathcal{T}_{\epsilon_A}, \mathcal{F}_{\epsilon_A}) :=  (\module A, 0)$ and $(\mathcal{Y}_{\epsilon_A}, \mathcal{X}_{\epsilon_A}) :=  (0, \module A^!)$. 
On the other hand, if $\epsilon(1)=-1$, then $(\mathcal{T}_{\epsilon_A}, \mathcal{F}_{\epsilon_A}) :=  (0, \module A)$ and $(\mathcal{Y}_{\epsilon_A}, \mathcal{X}_{\epsilon_A}) :=  (\module A^!, 0)$.

\begin{lemma} \label{simple-injective}
Assume that $A$ is non-simple.  Then the following hold.
\begin{enumerate}
\item $\mathcal{T}_{\epsilon_A} = \Fac P_A^{\epsilon_A,+}$ holds. 
In particular, if $\mathcal{T}\in \tors_{\epsilon_A} A$, then $\mathcal{T} \subseteq \mathcal{T}_{\epsilon_A}$.
\item If $M \in \mathcal{T}_{\epsilon_A}$, then $\rad M \in \mathcal{F}_{\epsilon_A}$.
\item $S_A^{\epsilon_A, +} \in \mathcal{T}_{\epsilon_A}$ and 
$\mathbf{R}(S_A^{\epsilon_A, +}) \cong I_{A^!}^{\epsilon_A,+}$. 
\item $S_{A^!}^{\epsilon_A, -}$ (resp. $S_{A^!}^{\epsilon_A, +}$) is a semisimple injective (resp. projective) $A^!$-module that is isomorphic to $I_{A^!}^{\epsilon_A,-}$ (resp. $P_{A^!}^{\epsilon_A, +}$). 
\end{enumerate}
\end{lemma}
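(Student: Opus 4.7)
The plan is to prove the four parts sequentially, using Proposition \ref{reflection} and Lemma \ref{simples} as main inputs.

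Part (1) is a direct application of Proposition \ref{reflection}(b) with $\mathbf{a} = \epsilon_A^{-1}(-1)$ (the set of sinks of $\Gamma_A$): under the identifications $\bigoplus_{i \notin \mathbf{a}} P(i) = P_A^{\epsilon_A,+}$ and $S_A(\mathbf{a}) = S_A^{\epsilon_A,-}$, the equality there translates to $\mathcal{T}_{\epsilon_A} = \Fac P_A^{\epsilon_A,+}$. For the ``in particular'' part: given $\mathcal{T} \in \tors_{\epsilon_A} A$, a nonzero map $M \to S(j)$ with $M \in \mathcal{T}$ and $j$ a sink would exhibit $S(j)$ as a quotient of $M$, forcing $S(j) \in \mathcal{T}$ and contradicting $\epsilon_A(j) = -1$. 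Thus $\Hom_A(\mathcal{T}, S_A^{\epsilon_A,-}) = 0$, giving $\mathcal{T} \subseteq \mathcal{T}_{\epsilon_A}$.

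For (2), the bipartiteness of $\Gamma_A$ ensures that $\rad P(i) \in \add S_A^{\epsilon_A,-}$ for every source $i$, since every arrow out of a source lands in a sink and $\rad^2 P(i) = 0$. Given $M \in \Fac P_A^{\epsilon_A,+}$, fix a surjection $\pi \colon P \twoheadrightarrow M$ with $P \in \add P_A^{\epsilon_A,+}$; then $\rad M = \pi(\rad P)$ is a quotient of a semisimple module in $\add S_A^{\epsilon_A,-}$, and hence itself lies in $\add S_A^{\epsilon_A,-} = \mathcal{F}_{\epsilon_A}$.

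The membership $S_A^{\epsilon_A,+} \in \mathcal{T}_{\epsilon_A}$ in (3) is immediate because $\Hom_A(S_A^{\epsilon_A,+}, S_A^{\epsilon_A,-}) = 0$ (disjoint composition factors). For the isomorphism, I would use Lemma \ref{simples} to write $S_A^{\epsilon_A,+} \cong I_A^{\epsilon_A,+} = D(Ae^{\epsilon_A,+})$ and then apply Hom-tensor adjunction:
$$
\mathbf{R}(I_A^{\epsilon_A,+}) = \Hom_A(T[\mathbf{a}], D(Ae^{\epsilon_A,+})) \cong D(T[\mathbf{a}] \otimes_A Ae^{\epsilon_A,+}) \cong D(T[\mathbf{a}]e^{\epsilon_A,+})
$$
as right $A^!$-modules. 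The crucial structural point is that for each source $i$, the $i$-th summand of $T[\mathbf{a}]$ is precisely $P(i) = e_iA$, so the composite
$$
A^! e'_i \xrightarrow{\ \sim\ } \Hom_A(P(i), T[\mathbf{a}]) \xrightarrow{\mathrm{ev}_{e_i}} T[\mathbf{a}] \cdot e_i
$$
is an isomorphism of left $A^!$-modules. Summing over sources yields $A^!\, e^{\epsilon_A,+} \cong T[\mathbf{a}] \cdot e^{\epsilon_A,+}$ (here $e^{\epsilon_A,+}$ denotes $\sum_{i\in \epsilon_A^{-1}(+)} e'_i \in A^!$ on the left and $\sum_{i\in \epsilon_A^{-1}(+)} e_i \in A$ on the right). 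Dualizing gives $D(T[\mathbf{a}]e^{\epsilon_A,+}) \cong D(A^!\, e^{\epsilon_A,+}) = I_{A^!}^{\epsilon_A,+}$.

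Part (4) follows by applying Lemma \ref{simples} to $A^!$ itself. By Proposition \ref{reflection}(a), $A^!$ is a (non-simple) hereditary algebra with radical square zero whose valued quiver $\Gamma_{A^!}$ reverses that of $\Gamma_A$; in particular $\epsilon_{A^!} = -\epsilon_A$, so that $(\epsilon_{A^!}, -) \leftrightarrow (\epsilon_A, +)$ and vice versa. Substituting this identity into the conclusion of Lemma \ref{simples} applied to $A^!$ yields precisely the two statements in (4). The main technical subtlety will be in part (3), specifically in the identification $A^! e'_i \cong T[\mathbf{a}] \cdot e_i$ of left $A^!$-modules, which crucially depends on the APR-type construction of $T[\mathbf{a}]$ placing $P(i)$ as the summand at each source vertex $i$.
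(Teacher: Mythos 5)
Your proposal is correct and follows essentially the same route as the paper: part (1) via Proposition \ref{reflection}(b), part (2) via the semisimplicity of radicals of the projectives $P_A^{\epsilon_A,+}$, and part (4) by applying Lemma \ref{simples} to $A^!$ with the reversed quiver. The only divergence is in (3), where the paper simply cites \cite[Section VI, Lemma 4.9]{ASS} for $\mathbf{R}(S_A^{\epsilon_A,+})\cong I_{A^!}^{\epsilon_A,+}$, while you verify it directly by Hom-tensor adjunction together with the identification $A^!e'_i\cong T[\mathbf{a}]e_i$ at source vertices; this is a valid substitute for the citation.
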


\begin{proof}
(1): By Proposition \ref{reflection}(b), we have 
$\mathcal{T}_{\epsilon_A} := {^{\perp} (S_A^{\epsilon_A,+})} = \Fac P^{\epsilon_A, +}_A$.
Moreover, if $\mathcal{T}\in \tors_{\epsilon_A} A$, then $\mathcal{T} \subseteq \Fac P^{\epsilon_A, +}_A = \mathcal{T}_{\epsilon_A}$.

(2): Clearly, $\rad A \in \add S_A^{\epsilon_A,-} =\mathcal{F}_{\epsilon_A}$. 
For any $M\in \module A$, there is a surjection $A^{r} \rightarrow M$ for some integer $r$. It induces a surjection $(\rad A)^{r} \rightarrow \rad M$. 
Then $\rad A \in \mathcal{F}_{\epsilon_A}$ implies $\rad M \in \mathcal{F}_{\epsilon_A}$.

(3): It follows from \cite[Section VI, Lemma 4.9]{ASS}. 

(4): From the same reason as Lemma \ref{simples}, we have the assertion for $A^!$.
\end{proof}

The following is a basic result in this paper. 

\begin{theorem} \label{hereditary case}
Let $A$ be a connected hereditary algebra with radical square zero. 
Then there is an isomorphism of partially ordered sets
$$
R\colon \tors_{\epsilon_A} A\longrightarrow \fators A^!,
$$
where $R(\mathcal{T}):= \add (\mathbf{R}(\mathcal{T}), \mathcal{Y}_{\epsilon_A})$ for any $\mathcal{T} \in \tors_{\epsilon_A} A$. The inverse correspondence is given by $L \colon \mathcal{U} \mapsto \mathbf{L}(\mathcal{U}\cap \mathcal{X}_{\epsilon_A})$ for any $\mathcal{U} \in \fators A^!$.
\end{theorem}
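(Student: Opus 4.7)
The plan is to construct the bijection by first verifying that $R$ lands in $\fators A^!$ and then showing that the candidate inverse $L$ satisfies both composition identities; well-definedness of $L$ as a map into $\tors_{\epsilon_A}A$ and order-preservation will follow automatically. The simple case is immediate from the conventions preceding the statement, so I assume $A$ is non-simple.

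The structural fact driving the argument is that the torsion pair $(\mathcal{Y}_{\epsilon_A},\mathcal{X}_{\epsilon_A})$ is splitting (Proposition \ref{reflection}(c)) and, moreover, $\mathcal{Y}_{\epsilon_A}=\add S_{A^!}^{\epsilon_A,-}$ is semisimple and hence closed under submodules; consequently the torsion radical $\mathbf{t}$ for this pair is exact, and every $A^!$-module decomposes canonically as $V=V^{\mathcal{Y}}\oplus V^{\mathcal{X}}$. Since $\mathcal{T}\subseteq\mathcal{T}_{\epsilon_A}$ by Lemma \ref{simple-injective}(1), one has $\mathbf{R}(\mathcal{T})\subseteq\mathcal{X}_{\epsilon_A}$, and the exact equivalence $\mathcal{T}_{\epsilon_A}\simeq\mathcal{X}_{\epsilon_A}$ transports closure under direct summands from $\mathcal{T}$ to $\mathbf{R}(\mathcal{T})$. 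Combining these points yields the convenient reformulation
$$R(\mathcal{T})=\{\,V\in\module A^!\mid V^{\mathcal{X}}\in\mathbf{R}(\mathcal{T})\,\}.$$

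I would then verify that $R(\mathcal{T})\in\fators A^!$. Given a short exact sequence $0\to V_1\to V\to V_2\to 0$ in $\module A^!$, exactness of $\mathbf{t}$ produces a short exact sequence $0\to V_1^{\mathcal{X}}\to V^{\mathcal{X}}\to V_2^{\mathcal{X}}\to 0$ lying inside $\mathcal{X}_{\epsilon_A}$; applying $\mathbf{L}$ transports it to a short exact sequence in $\mathcal{T}_{\epsilon_A}$, whereupon closure of $\mathcal{T}$ under quotients and extensions in $\module A$ translates (via $\mathbf{R}$) into the same closure for $R(\mathcal{T})$. For faithfulness, Lemma \ref{simple-injective}(4) gives $I_{A^!}^{\epsilon_A,-}\cong S_{A^!}^{\epsilon_A,-}\in\mathcal{Y}_{\epsilon_A}\subseteq R(\mathcal{T})$, while Lemma \ref{simple-injective}(3) identifies $I_{A^!}^{\epsilon_A,+}\cong\mathbf{R}(S_A^{\epsilon_A,+})$, which lies in $\mathbf{R}(\mathcal{T})\subseteq R(\mathcal{T})$ since $S_A^{\epsilon_A,+}\in\mathcal{T}$ by the defining property of $\tors_{\epsilon_A}A$. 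Hence $DA^!\in R(\mathcal{T})$.

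For the composition identities: the splitting $\mathcal{Y}\cap\mathcal{X}=0$ gives $R(\mathcal{T})\cap\mathcal{X}_{\epsilon_A}=\mathbf{R}(\mathcal{T})$, so $L(R(\mathcal{T}))=\mathbf{L}(\mathbf{R}(\mathcal{T}))=\mathcal{T}$. Conversely, faithfulness of $\mathcal{U}\in\fators A^!$ forces $I_{A^!}^{\epsilon_A,-}\in\mathcal{U}$ and hence $\mathcal{Y}_{\epsilon_A}\subseteq\mathcal{U}$; combined with the splitting decomposition $V=V^{\mathcal{Y}}\oplus V^{\mathcal{X}}$ and closure of $\mathcal{U}$ under direct summands this yields $\mathcal{U}=\add((\mathcal{U}\cap\mathcal{X}_{\epsilon_A})\cup\mathcal{Y}_{\epsilon_A})=R(L(\mathcal{U}))$. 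Order-preservation is immediate from the monotone definitions of $R$ and $L$. The main obstacle is establishing closure of $R(\mathcal{T})$ under extensions in $\module A^!$: this is precisely the step where exactness of $\mathbf{t}$ is indispensable, since without it a general extension in $\module A^!$ cannot be matched across the Brenner–Butler equivalence to one inside $\mathcal{T}_{\epsilon_A}$ where the torsion-class structure of $\mathcal{T}$ is available.
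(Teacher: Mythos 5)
Your reduction of $R(\mathcal{T})$ to the condition $V^{\mathcal{X}}\in\mathbf{R}(\mathcal{T})$ is fine, but the engine of your argument --- that the torsion radical $\mathbf{t}$ of the splitting hereditary torsion pair $(\mathcal{Y}_{\epsilon_A},\mathcal{X}_{\epsilon_A})$ is exact, so that any short exact sequence $0\to V_1\to V\to V_2\to 0$ induces a short exact sequence $0\to V_1^{\mathcal{X}}\to V^{\mathcal{X}}\to V_2^{\mathcal{X}}\to 0$ --- is false. Closure of $\mathcal{Y}_{\epsilon_A}$ under submodules gives only left exactness of $\mathbf{t}$, and splitting does not upgrade this. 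Concretely, take $A^!$ of type $\mathbb{A}_2$ with quiver $a\to b$ ($a$ a source of $\Gamma_{A^!}$, so $S(a)=S_{A^!}^{\epsilon_A,-}$, $\mathcal{Y}_{\epsilon_A}=\add S(a)$, $\mathcal{X}_{\epsilon_A}=\add(S(b)\oplus M)$ with $M$ the length-two indecomposable). For $0\to S(b)\to M\to S(a)\to 0$ one gets $V_1^{\mathcal{X}}=S(b)$, $V^{\mathcal{X}}=M$, $V_2^{\mathcal{X}}=0$, and $0\to S(b)\to M\to 0\to 0$ is not exact. Since this is, as you yourself note, exactly the step on which your proof of extension-closure of $R(\mathcal{T})$ rests, the argument breaks precisely there. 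The paper avoids this by never claiming exactness of $\mathbf{t}$: for extension-closure it assumes (harmlessly, since $\mathcal{Y}_{\epsilon_A}\subseteq R(\mathcal{T})$ consists of semisimple injectives) that the middle term has no summand in $\mathcal{Y}_{\epsilon_A}$, applies the right-exact functor $\mathbf{L}$ to the \emph{original} sequence, and replaces $\mathbf{L}(V_1)$ by the image of $\mathbf{L}(\iota)$, which still lies in $\mathcal{T}$ because $\mathcal{T}$ is closed under quotients.

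A second, smaller issue: you assert that well-definedness of $L$ as a map into $\tors_{\epsilon_A}A$ ``follows automatically.'' It does not. Your computation $R(L(\mathcal{U}))=\mathcal{U}$ uses $R$ only as a formula applied to the subcategory $\mathbf{L}(\mathcal{U}\cap\mathcal{X}_{\epsilon_A})$; to conclude that $\mathcal{U}$ lies in the image of $R\colon\tors_{\epsilon_A}A\to\fators A^!$ (i.e.\ surjectivity) you must separately prove that $\mathbf{L}(\mathcal{U}\cap\mathcal{X}_{\epsilon_A})$ is a torsion class in $\tors_{\epsilon_A}A$. This is a genuine verification --- the paper's proof of closure under quotients for $L(\mathcal{U})$ needs the connecting term $\Ext^1_A(T[\epsilon_A^{-1}(-1)],X')\in\mathcal{Y}_{\epsilon_A}\subseteq\mathcal{U}$ together with faithfulness of $\mathcal{U}$ --- and cannot be omitted.
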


\begin{proof}
Assume that $A$ is simple. 
If $\epsilon_A(1)=1$, then $\tors_{\epsilon_A} A =\{\module A\}$ and $\fators A^!=\{\module A^!\}$ hold. 
By definition, we have 
$R(\module A) = \add (\mathbf{R}(\module A), 0) = \module A^!$ and $L(\module A^!) = \mathbf{L}(\module A^!)= \module A$.    
Similarly, if $\epsilon(1)=-1$, then $\tors_{\epsilon_A} A = \{0\}$ and $\fators A^! = \{\module A^!\}$. So, we have $R(0)=\add (\mathbf{R}(0), \module A^!) = \module A^!$ and 
$L(\module A^!) = \mathbf{L}(\module A^! \cap 0) =0$.
Therefore, $R$ is a bijection when $A$ is simple.

Next, we assume that $A$ is non-simple. 
Firstly, we show that $R(\mathcal{T})$ is a faithful torsion class for any $\mathcal{T} \in \tors_{\epsilon_A} A$. 
Let $\mathcal{T} \in \tors_{\epsilon_A} A$. 
By Lemma \ref{simple-injective}(1), we have $\mathcal{T} \subset \mathcal{T}_{\epsilon_A}$ and $\mathbf{R}(\mathcal{T}) \subset \mathcal{X}_{\epsilon}$. 
Firstly, we check that $R(\mathcal{T})$ is closed under factor modules, namely, 
$\Fac R(\mathcal{T}) \subseteq R(\mathcal{T})$. 
We have
$$
\Fac R(\mathcal{T}) = \Fac (\mathbf{R}(\mathcal{T}), \mathcal{Y}_{\epsilon_A}) = \add (\Fac \mathbf{R}(\mathcal{T}), \mathcal{Y}_{\epsilon_A}), 
$$
where we use that fact that $\Fac \mathcal{Y}_{\epsilon_A} = \add \mathcal{Y}_{\epsilon_A}$. 
Since $\mathcal{Y}_{\epsilon_A} \subset R(\mathcal{T})$ holds by definition, 
it reminds to show that $\Fac \mathbf{R}(\mathcal{T}) \subset R(\mathcal{T})$.
Let $Y \in \mathbf{R}(\mathcal{T})$ and take a surjection $Y \rightarrow Z$ in $\module A^!$. 
From the previous discussion, we can assume that $Z$ has no non-zero direct summands in $\mathcal{Y}_{\epsilon_A}$. 
In this case, $Z$ lies in $\mathcal{X}_{\epsilon_A}$ since the torsion pair $(\mathcal{Y}_{\epsilon_A}, \mathcal{X}_{\epsilon_A})$ is splitting. 
Applying $\mathbf{L}$ to $Y \rightarrow Z \rightarrow 0$, we obtain an exact sequence
$\mathbf{L}(Y) \rightarrow \mathbf{L}(Z) \rightarrow 0$.
Since $\mathbf{R}|_{\mathcal{T_{\epsilon_A}}}$ and $\mathbf{L}|_{\mathcal{X}_{\epsilon_A}}$ are mutually inverse, we have 
$\mathbf{L}(Y) \in \mathbf{L}\mathbf{R}(\mathcal{T})= \mathcal{T}$.  
It implies that $\mathbf{L}(Z) \in \mathcal{T}$ since $\mathcal{T}$ is closed under factor modules. 
Since $Z\in \mathcal{X}_{\epsilon_A}$, we have $Z \cong \mathbf{R}\mathbf{L}(Z) \in \mathbf{R}(\mathcal{T}) \subseteq R(\mathcal{T})$. 
Therefore, $\Fac \mathbf{R}(\mathcal{Y}) \subset R(\mathcal{T})$ holds, and hence $R(\mathcal{T})$ is closed under factor modules. 
Secondly, we see that $R(\mathcal{T})$ is closed under extensions. 
Let 
\begin{equation} \label{seq}
0\rightarrow X \overset{\iota}{\rightarrow} Y \rightarrow Z \rightarrow 0
\end{equation}
be a short exact sequence in $\module A^!$.
We show that $X, Z \in R(\mathcal{T})$ implies $Y \in R(\mathcal{T})$.
Suppose that $X, Z \in R(\mathcal{T})$. 
Since $S^{\epsilon_A,-}_{A^!}\in \mathcal{Y}_{\epsilon_A}$ is semisimple, 
we can assume that $Y$ has no non-zero indecomposable direct summands in $\mathcal{Y}_{\epsilon_A}$. In this case,  $Y$ lies in $\mathcal{X}_{\epsilon_A}$. 
Applying $\mathbf{L}$ to (\ref{seq}), we obtain a short exact sequence
$$
0 \rightarrow \Image \mathbf{L}(\iota) \rightarrow \mathbf{L}(Y) \rightarrow \mathbf{L}(Z) \rightarrow 0 
$$
with $\mathbf{L}(X), \mathbf{L}(Z) \in \mathbf{L}(R(\mathcal{T})) \subset \mathcal{T}$. 
Since $\mathcal{T}$ is a torsion class, $\mathbf{L}(X) \in \mathcal{T}$ implies $\Image \mathbf{L}(\iota) \in \mathcal{T}$, and therefore $\mathbf{L}(Y) \in \mathcal{T}$.   
Since $Y\in \mathcal{X}_{\epsilon_A}$, we have $Y \cong \mathbf{R}\mathbf{L}(Y) \in \mathbf{R}(\mathcal{T}) \subset R(\mathcal{T})$. So $R(\mathcal{T})$ is closed under extensions.
Consequently, $R(\mathcal{T})$ is a torsion class in $\module A^!$. 
Furthermore, it is faithful by the following reason: 
Since $\mathcal{T} \in \tors_{\epsilon_A} A$, $S_{A}^{\epsilon_A,+} \in \mathcal{T}$ holds. 
By Lemma \ref{simple-injective}(3), we have 
$I_{A^!}^{\epsilon_A,+} \cong \mathbf{R}(S_{A}^{\epsilon_A,+}) \in \mathbf{R}(\mathcal{T}) \subseteq R(\mathcal{T})$.
On the other hand, by Lemma \ref{simple-injective}(4), 
we have $I_{A^!}^{\epsilon_A, -}\cong S_{A^!}^{\epsilon_A,-} \in \mathcal{Y}_{\epsilon_A} \subseteq R(\mathcal{T})$ by definition. 
Therefore, $DA^! = I_{A^!}^{\epsilon_A,+} \oplus I_{A^!}^{\epsilon_A,-} \in R(\mathcal{T})$.


Secondly, we show that $L(\mathcal{U}) := \mathbf{L}(\mathcal{U}\cap \mathcal{X}_{\epsilon_A}) \in \tors_{\epsilon_A} A$ for any faithful torsion class $\mathcal{U}$ in $\module A^!$.
Let $\mathcal{U}$ be a faithful torsion class in $\module A^!$. 
Note that $L(\mathcal{U}) \subset \mathcal{T}_{\epsilon_A}$ and 
$\mathbf{R}(L(\mathcal{U})) \subset \mathcal{U} \cap \mathcal{X}_{\epsilon_A}$. 
Moreover, $\mathcal{Y}_{\epsilon_A} \subset \mathcal{U}$ since $\mathcal{U}$ is faithful. 
Let 
\begin{equation} \label{seq2}
0\rightarrow X' \overset{\iota'}{\rightarrow} Y' \rightarrow Z' \rightarrow 0
\end{equation}
be a short exact sequence in $\module A$.

We check that $L(\mathcal{U})$ is closed under factor modules. 
Suppose that $Y' \in L(\mathcal{U}) \subset \mathcal{T}_{\epsilon_A}$. 
Since $\mathcal{T}_{\epsilon_A}$ is closed under factor modules, 
$Y' \in \mathcal{T}_{\epsilon_A}$ implies $Z' \in \mathcal{T}_{\epsilon_A}$. 
In particular, $\mathbf{R}(Z') \in \mathcal{X}_{\epsilon_A}$.
Since $\Ext^1_{A}(T[\epsilon^{-1}(-1)], \mathcal{T}_{\epsilon_A})=0$, there is a short exact sequence 
$$
0 \rightarrow \coker \mathbf{R}(\iota') \rightarrow \mathbf{R}(Z') \rightarrow 
\Ext^1_A(T[\epsilon^{-1}(-1)], X') \rightarrow 0 
$$
with $\Ext^1_A(T[\epsilon^{-1}(-1)], X') \in \mathcal{Y}_{\epsilon_A} \subset \mathcal{U}$. 
Since $\mathbf{R}(Y') \in \mathbf{R}(L(\mathcal{U})) \subset \mathcal{U}$ 
and $\mathcal{U}$ is closed under factor modules, we have $\coker \mathbf{R}(\iota') \in \mathcal{U}$. 
Since $\mathcal{U}$ is closed under extensions, $\mathbf{R}(Z') \in \mathcal{U}$, and hence $\mathbf{R}(Z') \in \mathcal{U}\cap \mathcal{X}_{\epsilon_A}$.
Finally, we have $Z' \cong \mathbf{L}\mathbf{R}(Z') \in \mathbf{L}(\mathcal{U}\cap \mathcal{X}_{\epsilon_A}) = L(\mathcal{U})$. Therefore, $L(\mathcal{U})$ is closed under factor modules.
Next, we show that $L(\mathcal{U})$ is closed under extensions. 
Suppose that $X', Z' \in L(\mathcal{U})$.
In this case, the sequence (\ref{seq2}) is an short exact sequence in $\mathcal{T}_{\epsilon_A}$.
Since $\mathbf{R}|_{\mathcal{T}_{\epsilon_A}}$ is exact, we obtain the following short exact sequence: 
$$
0 \rightarrow \mathbf{R}(X') \rightarrow \mathbf{R}(Y') \rightarrow \mathbf{R}(Z') \rightarrow 0
$$
with $\mathbf{R}(X'), \mathbf{R}(Z') \in \mathbf{R}(L(\mathcal{U})) \subset \mathcal{U} \cap \mathcal{X}_{\epsilon_A}$. 
Since both $\mathcal{U}, \mathcal{X}_{\epsilon_A}$ are closed under extensions, 
$\mathbf{R}(Y')$ is also contained in $\mathcal{U}\cap \mathcal{X}_{\epsilon_A}$.
Since $Y'\in \mathcal{T}_{\epsilon_A}$, we have $Y' \cong \mathbf{L}\mathbf{R}(Y') \in \mathbf{L}(\mathcal{U}\cap \mathcal{X}_{\epsilon_A}) = L(\mathcal{U})$. 
Thus, $L(\mathcal{U})$ is closed under extensions. 
As a consequence, $L(\mathcal{U})$ is a torsion class in $\module A$.
Moreover, we claim that $L(\mathcal{U})$ is contained in $\tors_{\epsilon_A} A$, that is, 
$L(\mathcal{U})$ satisfies $S_A^{\epsilon,+} \in L(\mathcal{U})$ and $L(\mathcal{U}) \subseteq \mathcal{T}_{\epsilon_A}$. 
By Lemma \ref{simple-injective}(4), $\mathbf{R}(S_A^{\epsilon,+}) \cong I_{A^!}^{\epsilon_A,+} \in \mathcal{X}_{\epsilon_A}$ is an injective $A^!$-module. 
Since $\mathcal{U}$ is faithful, 
$\mathbf{R}(S_A^{\epsilon,+})$ is contained in $\mathcal{U}$.
Thus, $\mathbf{R}(S_A^{\epsilon,+}) \in \mathcal{U} \cap \mathcal{X}_{\epsilon_A}$.
Since $S_A^{\epsilon,+}\in \mathcal{T}_{\epsilon_A}$, 
we have $S_A^{\epsilon,+} \cong \mathbf{L}\mathbf{R}(S_A^{\epsilon,+}) \in 
\mathbf{L}(\mathcal{U} \cap \mathcal{X}_{\epsilon_A}) =L(\mathcal{U})$.
On the other hand, $\mathcal{U}\cap \mathcal{X}_{\epsilon_A} \subseteq \mathcal{X}_{\epsilon_A}$ implies that $L(\mathcal{U}) = \mathbf{L}(\mathcal{U} \cap \mathcal{X}_{\epsilon_A}) \subseteq \mathcal{T}_{\epsilon_A}$. 
Therefore, $L(\mathcal{U}) \in \tors_{\epsilon_A} A$.

It is easy to check that $R$ and $L$ are morphisms of partially ordered sets. 
Furthermore, they are mutually inverse. In fact, we have
$$
LR(\mathcal{T})= L(\add (\mathbf{R}(\mathcal{T}), \mathcal{Y}_{\epsilon})) =
\mathbf{L}(\mathbf{R}(\mathcal{T}) \cap \mathcal{X}_{\epsilon_A}) = 
\mathbf{L}\mathbf{R}(\mathcal{T}) = \mathcal{T}
$$
for any $\mathcal{T} \in \tors_{\epsilon_A} A$, and
$$
RL(\mathcal{U}) = R(\mathbf{L}(\mathcal{U}\cap \mathcal{X}_{\epsilon_A})) =
\add (\mathbf{R}\mathbf{L}(\mathcal{U}\cap \mathcal{X}_{\epsilon_A})), \mathcal{Y}_{\epsilon_A}))= 
\add (\mathcal{U} \cap \mathcal{X}_{\epsilon_A}, \mathcal{Y}_{\epsilon_A}) = \mathcal{U}
$$
for any $\mathcal{U} \in \fators A^!$. 
We complete the proof.
\end{proof}

Next, we focus on functorially finite torsion classes. 
We begin with the following lemma.

\begin{lemma}
\label{radical}
Let $M \in \mathcal{T}_{\epsilon_A}$ and $Q \in \mathcal{F}_{\epsilon_A}$. 
Then the following conditions are equivalent. 
\begin{enumerate}
\item[(a)] $\Hom_{A}(Q, M)=0$ 
\item[(b)] $\Ext^1_{A^{!}}(Q_{A^!}, \mathbf{R}(M))=0$.
\end{enumerate}
\end{lemma}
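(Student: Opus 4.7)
I would first reduce to $Q$ indecomposable, in which case Lemma \ref{simples} gives $Q = S_A(i) = P_A(i)$ for some sink $i \in \epsilon_A^{-1}(-1)$ of $\Gamma_A$. The idea is to realise both $\Hom_A(Q,M)$ and $\Ext^1_{A^!}(Q_{A^!}, \mathbf{R}(M))$ as the cokernel of one and the same morphism, using the Auslander--Reiten sequence
$$
0 \to Q \to X \to \tau^{-1} Q \to 0
$$
of $Q$; here $X \in \add P_A^{\epsilon_A,+}$ (its indecomposable summands are the projectives at the source-neighbours of $i$ in $\Gamma_A$), and $\tau^{-1}Q$ is an indecomposable summand of the tilting module $T := T[\epsilon_A^{-1}(-1)]$.

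Applying $\Hom_A(-, M)$ to this AR sequence, I would use that $Q, X$ are projective and that $\tau^{-1}Q \in \add T$ with $M \in \mathcal{T}_{\epsilon_A} = \Fac T$ (so the standard vanishing $\Ext^1_A(T, \Fac T)=0$ gives $\Ext^1_A(\tau^{-1}Q, M)=0$) to collapse the long exact Ext sequence to
$$
\Hom_A(Q, M) \;\cong\; \coker\bigl(\Hom_A(\tau^{-1}Q, M) \longrightarrow \Hom_A(X, M)\bigr).
$$
Next I would apply $\mathbf{R} = \Hom_A(T, -)$ to the AR sequence. Since $\mathbf{R}(Q)=0$ (because $Q \in \mathcal{F}_{\epsilon_A}$) and $\Ext^1_A(T, X) = \Ext^1_A(T, \tau^{-1}Q) = 0$ (because $X, \tau^{-1}Q \in \mathcal{T}_{\epsilon_A}$), the long exact sequence yields the short exact sequence
$$
0 \to \mathbf{R}(X) \to \mathbf{R}(\tau^{-1}Q) \to Q_{A^!} \to 0
$$
of $A^!$-modules; both $\mathbf{R}(X)$ and $\mathbf{R}(\tau^{-1}Q)$ are projective, being $\mathbf{R}$-images of summands of $T$. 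This is therefore a projective resolution of $Q_{A^!}$, and computing $\Ext^1_{A^!}(Q_{A^!}, \mathbf{R}(M))$ from it, while using the equivalence $\mathbf{R}\colon \mathcal{T}_{\epsilon_A} \xrightarrow{\sim} \mathcal{X}_{\epsilon_A}$ to transport the resulting Hom-groups back to $\module A$, produces the same cokernel as above.

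Comparing the two cokernels delivers a canonical isomorphism $\Hom_A(Q, M) \cong \Ext^1_{A^!}(Q_{A^!}, \mathbf{R}(M))$, which is in fact sharper than the asserted equivalence of (a) and (b). The main technical input is the pair of vanishings $\mathbf{R}(\mathcal{F}_{\epsilon_A})=0$ and $\Ext^1_A(T, \mathcal{T}_{\epsilon_A})=0$, both of which are standard properties of the tilting torsion pair $(\mathcal{T}_{\epsilon_A}, \mathcal{F}_{\epsilon_A})$ attached to $T$; conceptually, the isomorphism encodes the fact that the Brenner--Butler derived equivalence induced by $T$ sends $Q \in \mathcal{F}_{\epsilon_A}$ to the shift $Q_{A^!}[-1]$.
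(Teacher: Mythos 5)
Your proposal is correct, and it proves something strictly stronger than the lemma (a natural isomorphism $\Hom_A(Q,M)\cong\Ext^1_{A^!}(Q_{A^!},\mathbf{R}(M))$ rather than just simultaneous vanishing), but it takes a genuinely different route from the paper. The paper argues the contrapositive of (b)$\Rightarrow$(a): a nonzero map $f\colon Q\to M$ is injective because $Q$ is simple, its cokernel $N$ lies in $\mathcal{T}_{\epsilon_A}$, and applying $\mathbf{R}$ to $0\to Q\to M\to N\to 0$ (using $\mathbf{R}(Q)=0$ and $\Ext^1_A(T[\epsilon_A^{-1}(-1)],M)=0$) yields a short exact sequence $0\to\mathbf{R}(M)\to\mathbf{R}(N)\to Q_{A^!}\to 0$, which cannot split because $\mathbf{R}(N)\in\mathcal{X}_{\epsilon_A}$ has no summands in $\mathcal{Y}_{\epsilon_A}$; the direction (a)$\Rightarrow$(b) is then dismissed with ``dually''. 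Your argument instead fixes the almost split sequence $0\to Q\to X\to\tau^{-1}Q\to 0$ once and for all, reads off $\Hom_A(Q,M)$ as a cokernel from it, and applies $\mathbf{R}$ to it to obtain a length-one projective resolution of $Q_{A^!}$ over $A^!$, from which $\Ext^1_{A^!}(Q_{A^!},\mathbf{R}(M))$ is the same cokernel via the equivalence $\mathbf{R}|_{\mathcal{T}_{\epsilon_A}}$. What your approach buys is that both implications follow at once from one computation, avoiding the paper's unexplained dual step (which, made explicit, would require reconstructing a map $Q\to M$ from a non-split extension); the small extra cost is having to justify that the middle term $X$ of the almost split sequence lies in $\add P_A^{\epsilon_A,+}$, which holds here because $\rad Q=0$ forces every irreducible map out of the simple projective $Q=P(i)$ to land in a projective $P(j)$ with $j\to i$ an arrow, and all such $j$ are sources. (The degenerate case where $A$ is simple, so that $Q=0$ or $M=0$, is trivial and is not treated by the paper either.)
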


\begin{proof}
(b) $\Rightarrow$ (a): Without loss of generality, we can assume that $Q$ is simple. 
Let $f \colon Q \rightarrow M$ be a non-zero morphism of $A$-modules.  
Since $Q$ is simple, $f$ is injective. 
Since $\mathcal{T}_{\epsilon_A}$ is closed under factor modules, 
$M \in \mathcal{T}_{\epsilon_A}$ implies $N:=\coker f \in \mathcal{T}_{\epsilon_A}$. 
Applying $\mathbf{R}$ to a short exact sequence $0 \rightarrow Q \rightarrow M \rightarrow N \rightarrow 0$, we obtain the following short exact sequence in $\module A^{!}$:
$$
0 \rightarrow \mathbf{R}(M) \rightarrow \mathbf{R}(N) \rightarrow \Ext^1_{A}(T[\epsilon^{-1}(-1)], Q) \cong Q_{A^!} \rightarrow 0.  
$$
It does not splits since $\mathbf{R}(N) \in \mathcal{X}_{\epsilon_A}$ has no non-zero direct summands in $\mathcal{Y}_{\epsilon_A}$. 
Thus, we have the assertion. 

(a) $\Rightarrow$ (b): Dually, we get the converse.
\end{proof}

\begin{lemma} \label{stau-T-F}
Let $(M,Q)$ be a $\tau$-tilting pair for $A$. Then it is contained in $\stau_{\epsilon_A} A$ if and only if $M\in \mathcal{T}_{\epsilon_A}$ and $Q\in \mathcal{F}_{\epsilon_A}$.
\end{lemma}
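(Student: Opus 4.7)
The plan is to work through the minimal projective presentation of $M$ and match the condition on $g$-vectors with the conditions on the summands of $M$ and $Q$. Write $P^{-1} \xrightarrow{f} P^0 \to M \to 0$ for the minimal projective presentation. By the argument already given in the proof of Proposition \ref{restriction-epsilon}, the pair $(M,Q)$ lies in $\stau_{\epsilon_A} A$ if and only if $\add P^0 = \add P_A^{\epsilon_A,+}$ and $\add(Q \oplus P^{-1}) = \add P_A^{\epsilon_A,-}$.

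The $(\Rightarrow)$ direction follows immediately from these equalities: $M$ is a factor of $P^0 \in \add P_A^{\epsilon_A,+}$, hence $M \in \Fac P_A^{\epsilon_A,+} = \mathcal{T}_{\epsilon_A}$ by Lemma \ref{simple-injective}(1); and $Q$ is a summand of $Q \oplus P^{-1} \in \add P_A^{\epsilon_A,-} = \add S_A^{\epsilon_A,-} = \mathcal{F}_{\epsilon_A}$, where the middle identification comes from Lemma \ref{simples}.

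For the $(\Leftarrow)$ direction, suppose $M \in \mathcal{T}_{\epsilon_A}$ and $Q \in \mathcal{F}_{\epsilon_A}$. Since the top of $M$ controls its projective cover, $M \in \Fac P_A^{\epsilon_A,+}$ forces $P^0 \in \add P_A^{\epsilon_A,+}$; and $Q \in \mathcal{F}_{\epsilon_A} = \add P_A^{\epsilon_A,-}$ by Lemmas \ref{simples} and \ref{simple-injective}(1). The key remaining task is to show $P^{-1} \in \add P_A^{\epsilon_A,-}$. Here I would use both hereditary and radical square zero: since $A$ is hereditary, $P^{-1}$ is isomorphic to the syzygy $\Omega M = \Ker(P^0 \to M)$, and by minimality $\Omega M \subseteq \rad P^0$. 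Since $\rad^2 A = 0$ and $P^0 \in \add P_A^{\epsilon_A,+}$, the module $\rad P^0$ is semisimple and lies in $\add S_A^{\epsilon_A,-} = \add P_A^{\epsilon_A,-}$; a submodule of a semisimple module is again semisimple, so $P^{-1} \cong \Omega M \in \add P_A^{\epsilon_A,-}$.

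To finish, I combine the inclusions $\add P^0 \subseteq \add P_A^{\epsilon_A,+}$ and $\add(Q \oplus P^{-1}) \subseteq \add P_A^{\epsilon_A,-}$ with the structural fact that $\add A = \add P_A^{\epsilon_A,+} \sqcup \add P_A^{\epsilon_A,-}$. Applying Proposition \ref{properties of two-silt}(4) to the two-term silting complex $T_{(M,Q)} = (Q \oplus P^{-1} \to P^0)$ associated to $(M,Q)$ gives $\add A = \add P^0 \sqcup \add(Q \oplus P^{-1})$, so both inclusions are forced to be equalities, and hence $(M,Q) \in \stau_{\epsilon_A} A$. The only substantive step is the identification $P^{-1} \in \add P_A^{\epsilon_A,-}$; everything else is routine translation between categorical and projective-support data.
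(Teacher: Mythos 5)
Your proposal is correct and follows essentially the same route as the paper: reduce to the characterization $\add P^0=\add P_A^{\epsilon_A,+}$, $\add(Q\oplus P^{-1})=\add P_A^{\epsilon_A,-}$ from Proposition \ref{restriction-epsilon}, and in the converse direction place $P^{-1}$ inside $\add(\rad P^0)\subseteq\mathcal{F}_{\epsilon_A}$ using that $\rad P^0$ is semisimple (the paper cites Lemma \ref{simple-injective}(2) for exactly this). Your explicit final step upgrading the inclusions to equalities via Proposition \ref{properties of two-silt}(4) is a detail the paper leaves implicit, but it is the same underlying fact (Proposition \ref{properties of g-vectors}(2)), so there is no substantive difference.
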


\begin{proof}
Let $(M,Q)$ be a $\tau$-tilting pair for $A$. 
Let $0\rightarrow P^{-1} \rightarrow P^0 \rightarrow M \rightarrow 0$ be a minimal projective presentation of $M$ in $\module A$.
By Proposition \ref{restriction-epsilon}, 
$(M,Q) \in \stau_{\epsilon_A} A$ if and only if 
$\add P^{0} =\add P_A^{\epsilon_A,+}$ and 
$\add (Q\oplus P^{-1}) = \add P_A^{\epsilon_A,-} =  \mathcal{F}_{\epsilon_A}$.

If $(M,Q) \in \stau_{\epsilon_A} A$, then we have 
$M \in \Fac P^0 \subseteq \mathcal{T}_{\epsilon_A}$ and 
$Q \in \mathcal{F}_{\epsilon_A}$ from the above argument. 
Conversely, we assume that $M \in \mathcal{T}_{\epsilon_A}$ and $Q \in \mathcal{F}_{\epsilon_A}$. 
Clearly, we have $P^0 \in \add P_A^{\epsilon,+}$.
In addition, $P^{-1} \in \add (\rad P^0) \subseteq  \mathcal{F}_{\epsilon_A}$ by Lemma \ref{simple-injective}(2). So, $P^{-1}\oplus Q \in \mathcal{F}_{\epsilon_A}$. 
Consequently, $(M,Q) \in \stau_{\epsilon_A} A$.
\end{proof}

Our result is the following.

\begin{proposition} \label{stau-tilt}
The following diagram is commutative, and all arrows are isomorphisms of partially ordered sets:
$$
\xymatrix@C50pt{
\ar@{}[rd]|{\circlearrowright}
\stau_{\epsilon_A} A \ar[r]^{\varphi} \ar[d]^{\Fac} & \tilt A^! \ar[d]^{\Fac} \\
\ftors_{\epsilon_A} A  \ar[r]^{R} & \ffators A^!  
}
$$
where $R$ is the map in Theorem \ref{tors-fators}, and $\varphi$ is defined by $\varphi(M, Q) := \mathbf{R}(M) \oplus Q_{A^!}$ for any 
$\tau$-tilting pair $(M, Q) \in \stau_{\epsilon_A} A$. 
\end{proposition}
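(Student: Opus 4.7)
The plan is to show, in order, (a) $\varphi(M,Q) = \mathbf{R}(M) \oplus Q_{A^!}$ is a basic tilting $A^!$-module, (b) the diagram commutes, i.e.\ $\Fac \varphi(M,Q) = R(\Fac M)$, and (c) $\varphi$ is a bijection via an explicit inverse, with order-preservation following from (b) and the corresponding properties of the other three arrows.

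For (a), since $A^!$ is hereditary, it suffices by Proposition \ref{hereditary tilting}(2) to verify that $\varphi(M,Q)$ is rigid with $|\varphi(M,Q)| = |A^!|$. The equality $|\mathbf{R}(M)| + |Q_{A^!}| = |M| + |Q| = |A| = |A^!|$ is clear since $\mathbf{R}$ and $(-)_{A^!}$ preserve numbers of indecomposable summands, and basic-ness follows because the two summands lie respectively in $\mathcal{X}_{\epsilon_A}$ and $\mathcal{Y}_{\epsilon_A}$, which share no nonzero object. For rigidity I would split $\Ext^1_{A^!}(\varphi(M,Q),\varphi(M,Q))$ into four pieces. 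The two with $Q_{A^!}$ on the right vanish because $Q_{A^!}$ is semisimple injective by Lemma \ref{simple-injective}(4); the piece $\Ext^1_{A^!}(Q_{A^!},\mathbf{R}(M))$ vanishes by Lemma \ref{radical} applied to $\Hom_A(Q,M) = 0$; and $\Ext^1_{A^!}(\mathbf{R}(M),\mathbf{R}(M))$ is identified with $\Ext^1_A(M,M) = 0$ via the exact equivalence of Proposition \ref{reflection}(c), using that $\tau$-rigidity implies rigidity over the hereditary algebra $A$ by Proposition \ref{hereditary tilting}(1).

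For (b), both summands of $\varphi(M,Q)$ lie in $R(\Fac M) = \add(\mathbf{R}(\Fac M), \mathcal{Y}_{\epsilon_A})$, a torsion class, so $\Fac \varphi(M,Q) \subseteq R(\Fac M)$. For the reverse, $\mathbf{R}(\Fac M) \subseteq \Fac \mathbf{R}(M)$ follows from exactness of $\mathbf{R}$ on $\mathcal{T}_{\epsilon_A}$, so the only nontrivial point is to show $\mathcal{Y}_{\epsilon_A} \subseteq \Fac \varphi(M,Q)$. For each $i$ with $\epsilon_A(i) = -1$, the projective-simple $P(i) = S(i)$ is either a direct summand of $Q$ (whence $S_{A^!}(i)$ is a summand of $Q_{A^!}$), or a summand of $P^{-1}$ in the minimal projective presentation $0 \to P^{-1} \to P^0 \to M \to 0$. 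In the latter case, applying $\mathbf{R}$ and using the natural identification $\Ext^1_A(T[\epsilon_A^{-1}(-1)], P^{-1}) \cong (P^{-1})_{A^!}$ from Proposition \ref{reflection}(c) produces an exact sequence
\[
\mathbf{R}(P^0) \longrightarrow \mathbf{R}(M) \longrightarrow (P^{-1})_{A^!} \longrightarrow 0,
\]
exhibiting $(P^{-1})_{A^!}$, and hence $S_{A^!}(i)$, as a quotient of $\mathbf{R}(M)$. This is the main obstacle of the proof, since every other step is essentially formal; here one must combine the structure of the minimal projective presentation with the connecting-map identification from tilting theory.

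For (c), I would construct an explicit inverse $\psi \colon \tilt A^! \to \stau_{\epsilon_A} A$: given $T \in \tilt A^!$, decompose $T = T_{\mathcal{X}} \oplus T_{\mathcal{Y}}$ via the splitting torsion pair $(\mathcal{Y}_{\epsilon_A}, \mathcal{X}_{\epsilon_A})$, and set $\psi(T) := (\mathbf{L}(T_{\mathcal{X}}), Q_T)$, where $Q_T \in \mathcal{F}_{\epsilon_A}$ is the unique module with $(Q_T)_{A^!} = T_{\mathcal{Y}}$. Well-definedness (i.e., that $\psi(T) \in \stau_{\epsilon_A} A$) is the mirror image of (a), combined with Lemma \ref{stau-T-F}, while $\psi\varphi = \mathrm{id}$ and $\varphi\psi = \mathrm{id}$ are immediate from $\mathbf{L}\mathbf{R} = \mathbf{R}\mathbf{L} = \mathrm{id}$. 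Order-preservation of $\varphi$ is then automatic from the commuting square in (b) together with the iso-of-posets properties of $R$ (Theorem \ref{hereditary case}), the left $\Fac$-arrow (Proposition \ref{restriction-epsilon}), and the right $\Fac$-arrow (the tilting restriction of Theorem \ref{bijection}).
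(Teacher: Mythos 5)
Your steps (a) and (c) follow essentially the same route as the paper: the same decomposition of $\Ext^1_{A^!}(\varphi(M,Q),\varphi(M,Q))$ into four pieces handled by injectivity of $Q_{A^!}$, Lemma \ref{radical}, and the exact equivalence; the same inverse $\psi$ built from the splitting torsion pair; and the same appeal to Lemma \ref{stau-T-F}. Your argument that $\mathcal{Y}_{\epsilon_A}\subseteq\Fac\varphi(M,Q)$ via the minimal projective presentation and the identification $\Ext^1_A(T[\epsilon_A^{-1}(-1)],P^{-1})\cong (P^{-1})_{A^!}$ is correct, though longer than necessary: once $\varphi(M,Q)$ is known to be tilting it is faithful, so $DA^!\in\Fac\varphi(M,Q)$, and $\mathcal{Y}_{\epsilon_A}=\add I_{A^!}^{\epsilon_A,-}$ consists of injectives; this is the paper's one-line argument.

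The genuine gap is in step (b), at exactly the point you dismiss as formal: the claim that $\mathbf{R}(\Fac M)\subseteq\Fac\mathbf{R}(M)$ ``follows from exactness of $\mathbf{R}$ on $\mathcal{T}_{\epsilon_A}$''. The functor $\mathbf{R}=\Hom_A(T[\epsilon_A^{-1}(-1)],-)$ is exact only on short exact sequences all of whose terms lie in $\mathcal{T}_{\epsilon_A}$. A surjection $M^m\to X$ with $X\in\Fac M$ has kernel $N$ which in general leaves $\mathcal{T}_{\epsilon_A}$, and then $\mathbf{R}(M)^m\to\mathbf{R}(X)$ is \emph{not} surjective: its cokernel is $\Ext^1_A(T[\epsilon_A^{-1}(-1)],N)$, which is the nonzero module $(fN)_{A^!}\in\mathcal{Y}_{\epsilon_A}$ whenever the torsion-free part $fN$ of $N$ is nonzero. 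Already for $A=k(1\to 2)$ with $\epsilon_A=(1,-1)$, the epimorphism $P(1)\to S(1)$ has $\mathbf{R}(P(1))$ one-dimensional and $\mathbf{R}(S(1))$ two-dimensional, so $\mathbf{R}$ destroys surjectivity. What the four-term exact sequence actually yields is that $\mathbf{R}(X)$ is an extension of a module in $\mathcal{Y}_{\epsilon_A}$ by a quotient of $\mathbf{R}(M)^m$; one then concludes $\mathbf{R}(X)\in\Fac\varphi(M,Q)$ because this torsion class is closed under extensions and already contains $\mathcal{Y}_{\epsilon_A}$ --- which is how the paper argues. So the logical order of your two claims must be reversed: the inclusion $\mathcal{Y}_{\epsilon_A}\subseteq\Fac\varphi(M,Q)$ has to come first, and what you then obtain directly is only $\mathbf{R}(\Fac M)\subseteq\Fac\varphi(M,Q)$, not the stronger containment $\mathbf{R}(\Fac M)\subseteq\Fac\mathbf{R}(M)$ (the latter is true a posteriori, but not for the reason you give).
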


\begin{proof}
We remark that an $A$-module $M$ is $\tau$-rigid if and only if it is rigid 
by Proposition \ref{hereditary tilting}(1) since $A$ is hereditary. 

Firstly, we prove that $\varphi(M,Q):=\mathbf{R}(M)\oplus Q_{A^!}$ is a tilting $A^!$-module 
for any $\tau$-tilting pair $(M,Q) \in \stau_{\epsilon_A} A$. 
Let $(M,Q) \in \stau_{\epsilon_A} A$ be a $\tau$-tilting pair. 
We need to check that $\mathbf{R}(M)\oplus Q_{A^!}$ is rigid and $|\mathbf{R}(M)|+ |Q_{A^!}|=n$. 
By Lemma \ref{stau-T-F}, we have $M \in \mathcal{T}_{\epsilon_A}$ and $Q\in \mathcal{F}_{\epsilon_A}$. 
From the above remark, $\Ext^1_A(M,M)=0$ holds since $M$ is ($\tau$-)rigid.   
Since $\mathbf{R}|_{\mathcal{T}_{\epsilon_A}}$ is exact, 
we have $\Ext^1_{A^!}(\mathbf{R}(M), \mathbf{R}(M))=0$. 
On the other hand, by Lemma \ref{radical}, $\Hom_{A}(Q,M)=0$ implies $\Ext^1_{A^!}(Q_{A^!}, \mathbf{R}(M))=0$.
Moreover, we have $\Ext^1_{A^!}(\mathbf{R}(M)\oplus Q_{A^!}, Q_{A^!})=0$ since $Q_{A^!} \in \mathcal{Y}_{\epsilon_A}$ is injective. 
Thus, $\mathbf{R}(M) \oplus Q_{A^!}$ is rigid. 
On the other hand, we have $|\mathbf{R}(M)|+ |Q_{A^!}|= |M| + |Q| = n$ 
because a pair $(M,Q)$ is 
$\tau$-tilting. Therefore, $\varphi(M,Q):=\mathbf{R}(M)\oplus Q_{A^!}$ is tilting. 

Next, we give an inverse map of $\varphi$. 
For every $T\in \tilt A$, let $\psi(T):=(\mathbf{L}(T'), T''_A)$ where 
$T=T'\oplus T''$ is a decomposition of $T$ into direct summands $T',T''$ such that $T'\in \mathcal{X}_{\epsilon_A}$ and $T'' \in \mathcal{Y}_{\epsilon_A}$. 
We show that $\psi(T)$ provides a $\tau$-tilting pair for $A$. 
Since $\mathbf{L}|_{\mathcal{X}_{\epsilon}}$ is exact, we have 
$\Ext^1_{A}(\mathbf{L}(T'), \mathbf{L}(T')) \cong \Ext^1_{A^!}(T', T') =0$.
So, $\mathbf{L}(T')$ is ($\tau$-)rigid. 
On the other hand, by Lemma \ref{radical}, 
$\Ext^1_{A^!}(T'', \mathbf{R}(\mathbf{L}(T'))) = \Ext^1_{A^!}(T'', T') =0$ implies $\Hom_A(T''_A, \mathbf{L}(T'))=0$. Thus, $(\mathbf{L}(T'), T'')$ is a $\tau$-rigid pair. 
Finally, we have $|\mathbf{L}(T')| + |T''_A| = |T'| + |T''|= |T|=|A^!| =n$.
Therefore, $\psi(T)=(\mathbf{L}(T'), T''_A)$ is a $\tau$-tilting pair.
By Lemma \ref{stau-T-F}, it is contained in $\stau_{\epsilon_A} A$ since 
$\mathbf{L}(T')\in \mathcal{T}_{\epsilon_A}$ and $T''_A \in \mathcal{F}_{\epsilon_A}$.
Consequently, we obtain a map $\psi \colon \tilt A^! \rightarrow \stau_{\epsilon_A} A$.

The maps $\varphi$ and $\psi$ are mutually inverse. In fact, we have 
$
\psi(\varphi(M, Q))=\psi(\mathbf{R}(M) \oplus Q_{A^!}) = (\mathbf{L}\mathbf{R}(M), Q_A)
=(M, Q)
$ 
and 
$\varphi(\psi(T)) = \varphi(\mathbf{L}(T'), T''_A) = 
\mathbf{R}\mathbf{L}(T') \oplus T''_{A^!} = T$
for any $(M,P) \in \stau_{\epsilon_A} A$ and $T \in \tilt A^!$. 

Next, we show the commutativity of the diagram in our statement.
Let $(M,Q)\in \stau_{\epsilon_A} A$ be a $\tau$-tilting pair.
We need to show that 
$\mathbf{R}(\Fac M):=\add (\mathbf{R}(\Fac M), \mathcal{Y}_{\epsilon})$ is equal to 
$\Fac \varphi (M,Q)$. 
Clearly, $\varphi(M,Q)=\mathbf{R}(M) \oplus Q_{A^!} \in \add (\mathbf{R}(\Fac M), \mathcal{Y}_{\epsilon})$, so we have 
$ \Fac \varphi (M,Q)  \subseteq \add (\mathbf{R}(\Fac M), \mathcal{Y}_{\epsilon})$.
To prove the converse, we are enough to check that 
$\mathbf{R}(\Fac M)$ and $\mathcal{Y}_{\epsilon}$ are contained in $\Fac \varphi(M,Q)$. 
Since $\varphi(M,Q)$ is tilting, we have $\mathcal{Y}_{\epsilon} \in \Fac \varphi(M,Q)$. 
On the other hand, let $X \in \Fac M$ and consider a short exact sequence 
$0 \rightarrow N \rightarrow  M^{m} \rightarrow X \rightarrow 0$ for some integer $m$.  
Applying $\mathbf{R}$ to this sequence, we have 
$$
0 \rightarrow \mathbf{R}(N)  \overset{f}{\rightarrow} \mathbf{R}(M)^{m} \rightarrow \mathbf{R}(X) \rightarrow \Ext^1_{A^!}(T[\epsilon^{-1}(-1)], N) \rightarrow 0.
$$ 
with $\coker f, \Ext^1_{A^!}(T[\epsilon^{-1}(-1)], N) \in \Fac \varphi(M,Q)$. 
Since $\Fac \varphi(M,Q)$ is a torsion class, it implies that 
$\mathbf{R}(X) \in \Fac \varphi (M,Q)$. Thus, we have $\mathbf{R}(\Fac M) \subset \Fac \varphi(M,Q)$. Consequently, we have $R(\Fac M) = \Fac \varphi (M,Q)$.

Finally, since the maps $\Fac$ and $R$ in the diagrams are isomorphism of partially ordered sets, $\varphi$ is also an isomorphism.
We complete the proof.
\end{proof}

\begin{example} \rm
Let $Q$ be a finite connected quiver. If $Q$ is acyclic and bipartite, then the path algebra $A:=kQ$ is a hereditary algebra with radical square zero, whose (valued) quiver is $Q$.  
By (\ref{A! as upper matrix algebra}), we find that the algebra $A^!$ is isomorphic to $kQ^{\rm op}$. 
 
Let $Q \colon 1 \longleftarrow 2 \longrightarrow 3$ be a quiver. Then 
the path algebra $A=kQ$ of $Q$ is a hereditary algebra with radical square zero. 
And the associated map $\epsilon_A \colon [3] \rightarrow \{\pm1\}$ is given by 
$\epsilon_A(1)=-1, \epsilon_A(2)=1, \epsilon_A(3)=-1$. 
From the above remark, there is an isomorphism of algebras $A^!\cong kQ^{\rm op}$. 
The set $\tors_{\epsilon_A} A$ consists of five torsion classes of $\module A$, which are all functorially finite.   
In the following table, we describe the bijection in Theorem \ref{hereditary case} between $\tors_{\epsilon_A} A$ and $\fators A^!$. 
Here, each black point in the Auslander-Reiten quiver corresponds to an indecomposable module contained in a given torsion class, 
and squared one in the figure corresponds to a simple module in $\mathcal{F}_{\epsilon_A}$ or in $\mathcal{Y}_{\epsilon_A}$:

\begin{table}[h] 
{\renewcommand{\arraystretch}{2}
  \begin{tabular}{|c||c|c|c|c|c|}
\hline $\tors_{\epsilon_A}A$ & 
$\begin{xy} 
(0,0)*{\bullet}="3",  (-4, 4)*{\scalebox{0.6}{$\square$}}="1", 
(-4, -4)*{\scalebox{0.6}{$\square$}}="2", 
(4, 4)*{\bullet}="4",  (4, -4)*{\bullet}="5",  (8, 0)*{\bullet}="6", 
{"1" \ar "3"},
{"2" \ar "3"},
{"3" \ar "4"},
{"3" \ar "5"},
{"4" \ar "6"},
{"5" \ar "6"},
\end{xy}$ 
& 
$\begin{xy} 
(0,0)*{\circ}="3",  (-4, 4)*{\scalebox{0.6}{$\square$}}="1", 
(-4, -4)*{\scalebox{0.6}{$\square$}}="2", 
(4, 4)*{\bullet}="4",  (4, -4)*{\bullet}="5",  (8, 0)*{\bullet}="6", 
{"1" \ar "3"},
{"2" \ar "3"},
{"3" \ar "4"},
{"3" \ar "5"},
{"4" \ar "6"},
{"5" \ar "6"},
\end{xy}$ 
&
$\begin{xy} 
(0,0)*{\circ}="3",  (-4, 4)*{\scalebox{0.6}{$\square$}}="1", 
(-4, -4)*{\scalebox{0.6}{$\square$}}="2", 
(4, 4)*{\circ}="4",  (4, -4)*{\bullet}="5",  (8, 0)*{\bullet}="6", 
{"1" \ar "3"},
{"2" \ar "3"},
{"3" \ar "4"},
{"3" \ar "5"},
{"4" \ar "6"},
{"5" \ar "6"},
\end{xy}$ 
&
$\begin{xy} 
(0,0)*{\circ}="3",  (-4, 4)*{\scalebox{0.6}{$\square$}}="1", 
(-4, -4)*{\scalebox{0.6}{$\square$}}="2", 
(4, 4)*{\bullet}="4",  (4, -4)*{\circ}="5",  (8, 0)*{\bullet}="6", 
{"1" \ar "3"},
{"2" \ar "3"},
{"3" \ar "4"},
{"3" \ar "5"},
{"4" \ar "6"},
{"5" \ar "6"},
\end{xy}$ 
&
$ \begin{xy} 
(0,0)*{\circ}="3",  (-4, 4)*{\scalebox{0.6}{$\square$}}="1", 
(-4, -4)*{\scalebox{0.6}{$\square$}}="2", 
(4, 4)*{\circ}="4",  (4, -4)*{\circ}="5",  (8, 0)*{\bullet}="6", 
{"1" \ar "3"},
{"2" \ar "3"},
{"3" \ar "4"},
{"3" \ar "5"},
{"4" \ar "6"},
{"5" \ar "6"},
\end{xy} $ \\ 
\hline 
$\fators A^!$
&
$\begin{xy} 
(0,0)*{\bullet}="3",  (12, 4)*{\scalebox{0.6}{$\blacksquare$}}="1", 
(12, -4)*{\scalebox{0.6}{$\blacksquare$}}="2", 
(4, 4)*{\bullet}="4",  (4, -4)*{\bullet}="5",  (8, 0)*{\bullet}="6", 
{"6" \ar "1"},
{"6" \ar "2"},
{"3" \ar "4"},
{"3" \ar "5"},
{"4" \ar "6"},
{"5" \ar "6"},
\end{xy}$ 
&
$\begin{xy} 
(0,0)*{\circ}="3",  
(12, 4)*{\scalebox{0.6}{$\blacksquare$}}="1", 
(12, -4)*{\scalebox{0.6}{$\blacksquare$}}="2", 
(4, 4)*{\bullet}="4",  (4, -4)*{\bullet}="5",  (8, 0)*{\bullet}="6", 
{"6" \ar "1"},
{"6" \ar "2"},
{"3" \ar "4"},
{"3" \ar "5"},
{"4" \ar "6"},
{"5" \ar "6"},
\end{xy}$ 
&
$\begin{xy} 
(0,0)*{\circ}="3",  
(12, 4)*{\scalebox{0.6}{$\blacksquare$}}="1", 
(12, -4)*{\scalebox{0.6}{$\blacksquare$}}="2", 
(4, 4)*{\circ}="4",  (4, -4)*{\bullet}="5",  (8, 0)*{\bullet}="6", 
{"6" \ar "1"},
{"6" \ar "2"},
{"3" \ar "4"},
{"3" \ar "5"},
{"4" \ar "6"},
{"5" \ar "6"},
\end{xy}$ 
&
$\begin{xy} 
(0,0)*{\circ}="3", 
(12, 4)*{\scalebox{0.6}{$\blacksquare$}}="1", 
(12, -4)*{\scalebox{0.6}{$\blacksquare$}}="2", 
(4, 4)*{\bullet}="4",  (4, -4)*{\circ}="5",  (8, 0)*{\bullet}="6", 
{"6" \ar "1"},
{"6" \ar "2"},
{"3" \ar "4"},
{"3" \ar "5"},
{"4" \ar "6"},
{"5" \ar "6"},
\end{xy}$ 
&
$\begin{xy} 
(0,0)*{\circ}="3", 
(12, 4)*{\scalebox{0.6}{$\blacksquare$}}="1", 
(12, -4)*{\scalebox{0.6}{$\blacksquare$}}="2", 
(4, 4)*{\circ}="4",  (4, -4)*{\circ}="5",  (8, 0)*{\bullet}="6", 
{"6" \ar "1"},
{"6" \ar "2"},
{"3" \ar "4"},
{"3" \ar "5"},
{"4" \ar "6"},
{"5" \ar "6"},
\end{xy}$ \\ \hline 
  \end{tabular}
}
\end{table}
\end{example}

\subsection{$g$-vectors and dimension vectors}
In this section, we show that the bijection $\varphi$ in Proposition \ref{stau-tilt} induces a $\mathbb{Z}$-linear transformation between the corresponding $g$-vectors and dimension vectors.
Let $A$ be a connected hereditary algebra with radical square zero and let $A^!$ be an algebra defined in Section \ref{torsion class for hereditary}. 
As in the previous section, we fix a natural bijection 
between $\mathcal{S}(A)=\{e_1,\ldots, e_n\}$ and $\mathcal{S}(A^!)=\{e'_1,\ldots,e'_n\}$ mapping $e_i\mapsto e'_i$ for all $i\in [n]$. 
We promise that $g$-vectors and dimension vectors of $A$-modules and $A^!$-modules 
are considered with respect to these indexes. 
Let $\mathrm{B}_{\epsilon_A}:={\rm diag}(\epsilon_A(1), \ldots, \epsilon_A(n))$ be a diagonal matrix. We prove the following.

\begin{proposition}\label{g-c-vector} 
For any $\tau$-tilting pair $(M, Q) \in \stau_{\epsilon_A} A$, we have $g_{A}^{(M, Q)}= 
\mathrm{B}_{\epsilon_A} \cdot c_{A^!}^{\varphi(M, Q)}$.
\end{proposition}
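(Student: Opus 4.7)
The plan is to compute both sides coordinate by coordinate, exploiting the bipartite structure of $\Gamma_A$ and the reflection formula from Proposition~\ref{reflection}(d). Since $\mathrm{B}_{\epsilon_A}$ is involutive, the asserted identity is equivalent to
\[
c_{A^!}^{\varphi(M,Q)} = \mathrm{B}_{\epsilon_A}\cdot g_A^{(M,Q)},
\]
which is the form I will verify.

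First I would unfold the hypothesis $(M,Q)\in\stau_{\epsilon_A} A$: by Lemma~\ref{stau-T-F} a minimal projective presentation $P^{-1}\to P^0\to M\to 0$ satisfies $P^0\in\add P_A^{\epsilon_A,+}$ and $P^{-1}\oplus Q\in\add P_A^{\epsilon_A,-}$, so I may write
$P^0 = \bigoplus_{\epsilon_A(i)=+1}P(i)^{m_i}$, $P^{-1}=\bigoplus_{\epsilon_A(i)=-1}P(i)^{m_i'}$, and $Q=\bigoplus_{\epsilon_A(i)=-1}P(i)^{q_i}$. Directly from the definition of the $g$-vector, the $i$-th entry of $\mathrm{B}_{\epsilon_A}\cdot g_A^{(M,Q)}$ is then $m_i$ at sources and $m_i'+q_i$ at sinks.

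Next I would treat the two summands of $\varphi(M,Q)=\mathbf{R}(M)\oplus Q_{A^!}$ separately. The $Q_{A^!}$ part is immediate from the equivalence $(-)_{A^!}\colon\add S_A^{\epsilon_A,-}\to\add S_{A^!}^{\epsilon_A,-}$, which matches simples by index and hence contributes $q_i$ at sinks and $0$ at sources. For $\mathbf{R}(M)$, hereditarity of $A$ makes the minimal presentation a short exact sequence, giving $c_A^M = c_A^{P^0}-c_A^{P^{-1}}$. Combining this with Lemma~\ref{simples} (namely $P(i)=S_A(i)$ at sinks, and for a source $i$ the composition factors of $P(i)$ are $S_A(i)$ once together with $S_A(a)$ with multiplicity $d_{ia}'$ for each arrow $i\to a$, a consequence of radical square zero), one obtains
\[
c_A^M(j) = m_j\ \ (\epsilon_A(j)=+1), \qquad c_A^M(a) = \sum_{i\to a}d_{ia}' m_i - m_a'\ \ (\epsilon_A(a)=-1).
\]

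Finally I would apply the reflection identity $c_{A^!}^{\mathbf{R}(M)} = \mathrm{S}_{\epsilon_A^{-1}(-1)}\cdot c_A^M$ of Proposition~\ref{reflection}(d). Since $\Gamma_A$ is bipartite, source-entries are left fixed, yielding $m_j$ at sources; at a sink $a$ the reflection formula telescopes:
\[
(c_{A^!}^{\mathbf{R}(M)})_a = -c_A^M(a)+\sum_{i\to a}d_{ia}'\, c_A^M(i) = -\Bigl(\sum_{i\to a}d_{ia}' m_i - m_a'\Bigr)+\sum_{i\to a}d_{ia}' m_i = m_a'.
\]
Adding the $Q_{A^!}$-contribution recovers exactly the values of $\mathrm{B}_{\epsilon_A}\cdot g_A^{(M,Q)}$ computed above. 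The degenerate case in which $A$ is simple must be dispatched separately, but is immediate from the ad hoc definitions of $\mathbf{R}$, $\mathbf{L}$, and $(-)_{A^!}$ given in that case. No real obstacle is anticipated; the only mildly delicate point is accurate bookkeeping with the bipartite structure, whose underlying mechanism is that simultaneously reflecting at all sinks precisely cancels the contribution of arrows from sources to sinks in the composition series of $M$, leaving the bare multiplicities $m_a'$.
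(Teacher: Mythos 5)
Your proof is correct and is in substance the paper's own argument: the coordinate computation of $c_A^M$ from the minimal projective presentation (using that $P^{-1}$ and $\rad P^0$ live at sinks and the valuations $d_{ia}'$ give the composition factors of $P(i)$) is exactly the content of Lemma~\ref{Benson} unpacked from matrix form, and the telescoping you observe at sinks under the reflection is the identity $\mathrm{S}_{\epsilon_A}=\mathrm{C}_A\mathrm{B}_{\epsilon_A}$ of Lemma~\ref{matrices}(1). The paper packages these as matrix identities and chains them in three lines, whereas you verify both sides entry by entry; the essential inputs (Lemma~\ref{stau-T-F}, Lemma~\ref{simples}, Proposition~\ref{reflection}(d)) coincide.
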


We need the following equations.
Let $\mathrm{S}_{\epsilon_A}:=\mathrm{S}_{\epsilon_A^{-1}(-1)}$ be a square matrix defined in Section \ref{valued quiver}, and let $\mathrm{I}_n$ be the identity matrix of size $n$. 

\begin{lemma} \label{matrices}
\begin{enumerate}
\item  $\mathrm{S}_{\epsilon_A}=\mathrm{C}_{A} \mathrm{B}_{\epsilon_A}$. 
\item  $2\, \mathrm{I}_n-\mathrm{C}_{A}=\mathrm{B}_{\epsilon_A} \mathrm{C}_{A} \mathrm{B}_{\epsilon_A}$.
\item $\mathrm{S}_{\epsilon_A}^2 = \mathrm{B}_{\epsilon_A}^2 = \mathrm{I}_n$
\end{enumerate}
\end{lemma}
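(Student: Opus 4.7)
The plan is to verify each of the three identities by a direct entry-wise calculation, exploiting the bipartite structure of $\Gamma_A$. After disposing of the simple case $n=1$ (a trivial $\pm 1$ check), I would assume $A$ is non-simple and read off explicit matrix entries. By Lemma \ref{simples}, for every sink $j$ we have $P(j)=S(j)$, while for every source $j$ the radical $\rad P(j)$ is semisimple and isomorphic to $\bigoplus_{j \to i} S(i)^{d_{ji}'}$ with each such $i$ a sink. Hence $[\mathrm{C}_A]_{ij}$ equals $d_{ji}'$ when $j$ is a source and $i$ a sink with $j\to i$, equals $1$ when $i=j$, and is $0$ otherwise. On the other side, since no arrow of $\Gamma_A$ joins two sinks, the reflections $s_a$ for $a \in \epsilon_A^{-1}(-1)$ pairwise commute, so $\mathrm{S}_{\epsilon_A}$ is well-defined independent of the ordering; reading off the formula in Section \ref{valued quiver} gives $[\mathrm{S}_{\epsilon_A}]_{ij}=\delta_{ij}$ whenever $i$ is a source, $[\mathrm{S}_{\epsilon_A}]_{aa}=-1$ for each sink $a$, and $[\mathrm{S}_{\epsilon_A}]_{aj}=d_{ja}'$ for $a$ a sink and $j$ a source with $j\to a$ (all other entries vanish).

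With these tables in hand, (1) is immediate: right-multiplication by $\mathrm{B}_{\epsilon_A}$ simply negates the sink columns of $\mathrm{C}_A$, and comparing the four source/sink cases of $(i,j)$ shows the resulting matrix coincides with $\mathrm{S}_{\epsilon_A}$ entry by entry. Identity (2) follows the same way: $[\mathrm{B}_{\epsilon_A} \mathrm{C}_A \mathrm{B}_{\epsilon_A}]_{ij} = \epsilon_A(i)\epsilon_A(j)[\mathrm{C}_A]_{ij}$, and the only off-diagonal entries of $\mathrm{C}_A$ sit in the source-column/sink-row block, where the sign $\epsilon_A(i)\epsilon_A(j)$ equals $-1$, producing exactly $2\mathrm{I}_n-\mathrm{C}_A$.

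For (3), $\mathrm{B}_{\epsilon_A}^2=\mathrm{I}_n$ is clear since $\mathrm{B}_{\epsilon_A}$ is diagonal with $\pm 1$ entries, and I would derive $\mathrm{S}_{\epsilon_A}^2=\mathrm{I}_n$ from (1) and (2) via
\[
\mathrm{S}_{\epsilon_A}^2 = \mathrm{C}_A\mathrm{B}_{\epsilon_A}\mathrm{C}_A\mathrm{B}_{\epsilon_A} = \mathrm{C}_A(2\mathrm{I}_n-\mathrm{C}_A) = \mathrm{I}_n - (\mathrm{C}_A-\mathrm{I}_n)^2,
\]
combined with the observation that $(\mathrm{C}_A-\mathrm{I}_n)^2=0$: every nonzero entry of $\mathrm{C}_A-\mathrm{I}_n$ lies in a sink-row/source-column slot, and the bipartite structure forbids any composition of two such entries from being nonzero. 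The main obstacle is purely bookkeeping, namely keeping consistent row/column conventions for $\mathrm{C}_A$, $\mathrm{S}_{\epsilon_A}$, and the valuation pair $(d_{ij}',d_{ij}'')$; once these are fixed, each identity reduces to a one-line case check.
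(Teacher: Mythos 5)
Your proof is correct and follows essentially the same route as the paper: both arguments reduce to writing down the explicit entries of $\mathrm{C}_A$, $\mathrm{S}_{\epsilon_A}$ and $\mathrm{B}_{\epsilon_A}$ from the bipartite structure of $\Gamma_A$ (the paper does this in block form after reordering so that sources precede sinks) and then verifying the three identities by direct multiplication. Your derivation of $\mathrm{S}_{\epsilon_A}^2=\mathrm{I}_n$ from (1), (2) and $(\mathrm{C}_A-\mathrm{I}_n)^2=0$ is a harmless variant of the paper's block computation.
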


\begin{proof}
Let $\Gamma_A$ be a valued quiver of $A$. 
Recall that each valuation $d_{ij}'$ of an arrow $i \rightarrow j$ is equal to 
$j$-th entry of the dimension vector of $\rad P(i)/ \rad^2 P(i)$. 
Since $A$ is a hereditary algebra with radical square zero,
this is equal to the number $(\mathrm{C}_A)_{ji}=(c_A^{\rad P(i)})_j$ for any $i \neq j$. 

Reordering the numbering of vertices if you need, we can assume that $\epsilon^{-1}(1)=\{1, \ldots, m\}$ is the set of sources and $\epsilon^{-1}(-1)=\{m+1, \ldots, n\}$ is the set of sinks in $\Gamma_A$.
In this situation, our matrices are given by 
$$
\mathrm{C}_A = \begin{pmatrix}
\mathrm{I}_m & 0 \\ 
\mathrm{D} & \mathrm{I}_{n-m}
\end{pmatrix}, \ 
\mathrm{S}_{\epsilon_A} = \begin{pmatrix}
\mathrm{I}_m & 0 \\ 
\mathrm{D} & -\mathrm{I}_{n-m}
\end{pmatrix} \  and \ 
\mathrm{B}_{\epsilon_A} = \begin{pmatrix}
\mathrm{I}_m & 0 \\ 
0 & -\mathrm{I}_{n-m}
\end{pmatrix} \  
$$
where $\mathrm{D}=(d_{ji}')_{ij}$ is a matrix of size $(n-m) \times m$.
From an easy calculation, we get equations (1), (2) and (3). 
\end{proof}

The following calculation is an analog of \cite{Ben} in the study of the projective resolution over symmetric algebras with radical cube zero.

\begin{lemma} 
\label{Benson}
For an $A$-module $M \in \mathcal{T}_{\epsilon_A}$, we have 
$g_{A}^M= \mathrm{B}_{\epsilon_A} \mathrm{C}_A \mathrm{B}_{\epsilon_A} \cdot c_{A}^M$.
\end{lemma}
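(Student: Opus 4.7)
The plan is to reduce the identity to a formal consequence of Lemma \ref{matrices}, using that $A$ is hereditary. The key input from the hereditary hypothesis is that every module has projective dimension at most $1$, so the minimal projective presentation is actually a genuine short exact sequence.

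More precisely, the first step I would take is the following. For any $M \in \module A$, the syzygy $\Omega M := \Ker(P^{0}\to M)$ is a submodule of the projective module $P^{0}$, and hence itself projective. Since a minimal projective cover of a projective module is the module itself, in the minimal projective presentation $P^{-1} \to P^{0} \to M \to 0$ one has $P^{-1} = \Omega M$ and the differential $P^{-1} \to P^{0}$ is the inclusion. Taking dimension vectors of the resulting short exact sequence $0 \to P^{-1} \to P^{0} \to M \to 0$ and recalling that the columns of $\mathrm{C}_A$ are the dimension vectors of the indecomposable projectives, I obtain the fundamental identity $c_A^{M} = \mathrm{C}_A \cdot g_A^{M}$.

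The second step is to invert $\mathrm{C}_A$ by invoking Lemma \ref{matrices}. Combining parts (1) and (3) of that lemma gives
$$\mathrm{I}_n = \mathrm{S}_{\epsilon_A}^{2} = (\mathrm{C}_A \mathrm{B}_{\epsilon_A})(\mathrm{C}_A \mathrm{B}_{\epsilon_A}) = \mathrm{C}_A \mathrm{B}_{\epsilon_A} \mathrm{C}_A \mathrm{B}_{\epsilon_A},$$
so $\mathrm{C}_A$ is invertible with $\mathrm{C}_A^{-1} = \mathrm{B}_{\epsilon_A} \mathrm{C}_A \mathrm{B}_{\epsilon_A}$. Multiplying the identity $c_A^{M} = \mathrm{C}_A \cdot g_A^{M}$ by $\mathrm{C}_A^{-1}$ on the left immediately yields $g_A^{M} = \mathrm{B}_{\epsilon_A} \mathrm{C}_A \mathrm{B}_{\epsilon_A} \cdot c_A^{M}$, which is the assertion.

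I expect no substantial obstacle, since the argument is purely formal once the hereditary structure is exploited. I also note that the hypothesis $M \in \mathcal{T}_{\epsilon_A}$ is not logically needed for the identity itself — the same proof goes through for arbitrary $M \in \module A$. Its role is rather to guarantee sign-compatibility: when $M \in \mathcal{T}_{\epsilon_A} = \Fac P_A^{\epsilon_A,+}$, one has $P^{0} \in \add P_A^{\epsilon_A,+}$ and, by Lemma \ref{simple-injective}(2), $P^{-1} \subseteq \rad P^{0}$ lies in $\mathcal{F}_{\epsilon_A} = \add P_A^{\epsilon_A,-}$, so the signs of the entries of $g_A^{M}$ are dictated by $\epsilon_A$; this is what is actually relevant for the subsequent application to Proposition \ref{g-c-vector}.
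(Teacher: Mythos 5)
Your proof is correct, but it takes a genuinely different route from the paper's. You exploit heredity at the outset: $\Omega M$ is projective, so the minimal presentation is a short exact sequence $0\to P^{-1}\to P^0\to M\to 0$, giving the Euler-form relation $c_A^M=\mathrm{C}_A\cdot g_A^M$; you then observe that Lemma \ref{matrices}(1) and (3) say precisely that $\mathrm{C}_A$ is invertible with $\mathrm{C}_A^{-1}=\mathrm{B}_{\epsilon_A}\mathrm{C}_A\mathrm{B}_{\epsilon_A}$ (as one also sees directly from the block forms in that lemma's proof), and invert. The paper instead computes $g^{P^0}$ and $g^{P^{-1}}$ separately: it writes $c_A^M=c_A^{\tops M}+c_A^{\rad M}$, identifies $g^{P^0}=c_A^{\tops M}$ and $c_A^{\rad P^0}=(\mathrm{C}_A-\mathrm{I}_n)c_A^{\tops M}$, uses that $0\to\ker f\to\rad P^0\to\rad M\to 0$ splits (because $\rad P^0$ is semisimple in $\mathcal{F}_{\epsilon_A}$, by Lemma \ref{simple-injective}(2)) to get $g^{P^{-1}}=c_A^{\rad P^0}-c_A^{\rad M}$, and then assembles the answer via Lemma \ref{matrices}(2) together with the observation that $2\mathrm{I}_n-\mathrm{C}_A$ fixes dimension vectors of modules in $\mathcal{F}_{\epsilon_A}$. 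Your argument is shorter and more conceptual -- it makes transparent that the lemma is exactly the statement ``$\mathrm{B}_{\epsilon_A}\mathrm{C}_A\mathrm{B}_{\epsilon_A}$ inverts the Cartan matrix'' combined with the standard $c=\mathrm{C}g$ identity for modules of projective dimension at most one -- and, as you note, it establishes the identity for every $M\in\module A$, with the hypothesis $M\in\mathcal{T}_{\epsilon_A}$ only governing the signs of $g_A^M$ needed downstream in Proposition \ref{g-c-vector}. The paper's more hands-on computation has the mild virtue of isolating where the radical-square-zero structure (semisimplicity of $\rad P^0$) enters, but both proofs are complete and rest on the same matrix identities.
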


\begin{proof}
We start with a few basic observations of dimension vectors. 
Let $M\in \mathcal{T}_{\epsilon_A}$. 
From the short exact sequence $0\rightarrow \rad M \rightarrow M \rightarrow \tops M \rightarrow 0$, we obtain an equality $c_A^{M}=c_A^{\tops M}+c_A^{\rad M}$. 
On the other hand, let $0 \rightarrow P^{-1} \rightarrow P^0 \overset{f}{\rightarrow} M \rightarrow 0$ be a minimal projective presentation of $M$. 
With respect to the index $e_1,\ldots, e_n$, we have 
$g^{P^0}_A=c_A^{\tops P^0}= c_A^{\tops M}$. 
By definition of Cartan matrix of $A$, we have $c_A^{P^0} = \mathrm{C}_A \cdot c_{A}^{\tops P^0}= \mathrm{C}_A \cdot c_{A}^{\tops M}$. 
Therefore, $c_A^{\rad P^0} = (\mathrm{C}_A - \mathrm{I}_n) c_A^{\tops M}$ holds.

In our case, $\rad P^0$ is a semisimple $A$-module contained in $\mathcal{F}_{\epsilon_A}$ by Lemma \ref{simple-injective}(2). 
It implies that the short exact sequence 
$0\rightarrow \ker f \rightarrow \rad P^0 \rightarrow \rad M \rightarrow 0$ splits. Therefore, $g^{P^{-1}}_A = c^{\ker f}_A = c^{\rad P^0}_A - c^{\rad M}_A$ holds.

Consequently, we have 
\begin{eqnarray*}
g^M = g^{P^0} -g^{P^{-1}} &=& 
c_A^{\tops M} - c^{\rad P^0}_A + c^{\rad M}_A = 
c_A^{\tops M}  - (\mathrm{C}_A - \mathrm{I}_n) c_A^{\tops M} + c^{\rad M}_A \\
&=& 
(2\mathrm{I}_n - \mathrm{C}_A) (c_A^{\tops M} + c^{\rad M}_A) \\
&\overset{\rm Lem. \ref{matrices}(2)}{=} & \mathrm{B}_{\epsilon_A} \mathrm{C}_A \mathrm{B}_{\epsilon_A}\cdot c_A^M, 
\end{eqnarray*}
where we use $(2\mathrm{I}_n - \mathrm{C}_A) \cdot c_A^{\rad M} 
=c_A^{\rad M}$ since $\rad M \in \mathcal{F}_{\epsilon_A}$.
Thus, the assertion holds.
\end{proof}

Now, we are ready to prove Proposition \ref{g-c-vector}.

\begin{proof}[Proof of Proposition \ref{g-c-vector}]
If $A$ is simple, it is clear. 
So we assume that $A$ is non-simple. 
Let $(M, Q) \in \stau_{\epsilon_A} A$ be a $\tau$-tilting pair.   
By Lemma \ref{stau-T-F}, we have $M \in \mathcal{T}_{\epsilon_A}$ and $Q \in \mathcal{F}_{\epsilon_A}$. 
Since $Q$ is semisimple projective, we have $g_{A}^{Q} = c_A^{Q}=c_{A^!}^{Q_{A^!}}$. 
On the other hand, $c_{A}^{M} = \mathrm{S}_{\epsilon_A} \cdot c_{A^!}^{\mathbf{R}(M)}$ holds by Proposition \ref{reflection}(d).  
So, we have
\begin{eqnarray*}
 g_{A}^{(M, Q)} &=& g_{A}^M - g_{A}^Q
\overset{\rm Lem.\ref{Benson}}{=} {\rm B}_{\epsilon_A} \mathrm{C}_A \mathrm{B}_{\epsilon_A} \cdot c_{A}^M - c_{A^!}^{Q_{A^!}}  \overset{\rm Lem.\ref{matrices}(1)}{=} {\rm B}_{\epsilon_A}\mathrm{S}_{\epsilon_A}^2 \cdot c_{A^!}^{\mathbf{R}(M)} - c_{A^!}^{Q_{A^!}} 
\\ &\overset{\rm Lem. \ref{matrices}(3)}{=}& {\rm B}_{\epsilon_A} (c_{A^!}^{\mathbf{R}(M)} + c_{A^!}^{Q_{A^!}})  
= {\rm B}_{\epsilon_A}\cdot c_{A^!}^{\varphi(M, Q)}. 
\end{eqnarray*} 
We obtain the desired equality.
\end{proof}

\subsection{Another approach to $\varphi$} \label{two-silt-version}
The aim of this subsection is to understand the bijection $\varphi$ in Proposition \ref{stau-tilt}, in the level of the homotopy category. 
Recall that $\Ktwo(\proj A)$ is a full subcategory of $\Kb(\proj A)$ consisting of two-term complexes $T$ such that 
$g_A^T \in \{x \in \mathbb{Z}^n \mid 
\text{$x_i \in \epsilon(i) \cdot \mathbb{Z}_{\geq0}$ for all $i \in [n]$\}}$. 
Our result is the following.

\begin{theorem} \label{Koszul duality}
Let $A$ be a connected hereditary algebra with radical square zero. 
Then, there is an equivalence of additive categories 
$$
\xi \colon \module A^! \overset{\sim}{\longrightarrow} 
\mathsf{K}^{\rm 2}_{\epsilon_A}(\proj A) 
$$
and it induces the following commutative diagram:
$$
\begin{xy}
(0,0)*+{\twosilt_{\epsilon_A} A}="1", (50,0)*+{\stau_{\epsilon_A} A}="2", 
(25, -13)*+{\tilt A^!}="3", 
(25, -5)*{\circlearrowright},
{"1" \ar^{H^0} "2"},
{"2" \ar^{\varphi} "3"},  
{"1" \ar_{\xi^{-1}} "3"}, 
\end{xy}
$$
\end{theorem}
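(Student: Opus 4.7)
The plan is to construct $\xi$ explicitly using the splitting torsion pair on $\module A^!$, and then verify the triangle by unwinding definitions. On objects, given $X \in \module A^!$, decompose $X \cong X' \oplus X''$ with $X' \in \mathcal{X}_{\epsilon_A}$ and $X'' \in \mathcal{Y}_{\epsilon_A}$, using that $(\mathcal{Y}_{\epsilon_A}, \mathcal{X}_{\epsilon_A})$ is a splitting torsion pair. Set $M := \mathbf{L}(X') \in \mathcal{T}_{\epsilon_A}$ and $Q := (X'')_A \in \mathcal{F}_{\epsilon_A}$, take a minimal projective presentation $P^{-1} \xrightarrow{f} P^0 \to M \to 0$ in $\module A$, and define
$$\xi(X) := \bigl(P^{-1} \oplus Q \xrightarrow{(f,\,0)} P^0\bigr).$$
By Lemma \ref{simple-injective}, $P^0 \in \add P_A^{\epsilon_A,+}$ (because $M \in \Fac P_A^{\epsilon_A,+}$) and $P^{-1} \oplus Q \in \add P_A^{\epsilon_A,-}$ (because $\rad P^0 \in \mathcal{F}_{\epsilon_A}$), so $\xi(X)$ lies in $\Ktwo(\proj A)$.

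To extend $\xi$ to morphisms, observe that a morphism $h \colon X \to Y$ splits into three non-trivial components, since the torsion pair forces $\Hom_{A^!}(X'', Y') = 0$: namely $h' \in \Hom_{A^!}(X', Y')$, $h'' \in \Hom_{A^!}(X'', Y'')$, and a ghost component $h^{\mathrm{gh}} \in \Hom_{A^!}(X', Y'')$. Via the equivalences $\mathbf{L}$ and $(-)_A$, $h'$ corresponds to a morphism $M \to N$ that lifts uniquely (up to chain homotopy) to a chain map $(h^{-1}_{11}, h^0)$ between projective presentations, and $h''$ gives $h^{-1}_{22} \colon Q \to R$, where $R := (Y'')_A$. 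The ghost component is the subtle piece: standard tilting (Brenner-Butler) combined with the vanishings $\Hom_A(M,R) = 0$ (immediate from $M \in {}^{\perp}S_A^{\epsilon_A,-}$ and $R \in \add S_A^{\epsilon_A,-}$) and $\Hom_A(P^0, R) = 0$ (disjoint idempotent supports) yields natural isomorphisms
$$\Hom_{A^!}(\mathbf{R}(M), R_{A^!}) \cong \Ext^1_A(M,R) \cong \Hom_A(P^{-1}, R)/f^*\Hom_A(P^0, R),$$
and thus produces $h^{-1}_{21} \colon P^{-1} \to R$. Setting $h^{-1}_{12} = 0$ (forced by the chain-map condition and injectivity of the differential of $\xi(Y)$, since $A$ is hereditary), these components assemble into a well-defined $\xi(h) \in \Hom_{\Kb(\proj A)}(\xi(X), \xi(Y))$.

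For essential surjectivity, any $T = (T^{-1} \xrightarrow{d} T^0) \in \Ktwo(\proj A)$ splits, up to isomorphism in $\Kb(\proj A)$, as $(P^{-1} \xrightarrow{f} P^0) \oplus (Q \to 0)$, where $f$ is a minimal projective presentation of $M := \coker d \in \mathcal{T}_{\epsilon_A}$ and $Q \in \mathcal{F}_{\epsilon_A}$ absorbs the summand of $T^{-1}$ mapping trivially; hence $T \cong \xi(\mathbf{R}(M) \oplus Q_{A^!})$. For fully faithfulness, Lemma \ref{exa} expresses $\Hom_{\Kb(\proj A)}(\xi(X), \xi(Y))$ as chain maps modulo homotopies, and direct bookkeeping shows that it splits into exactly the three pieces $\Hom_A(M,N)$, $\Ext^1_A(M,R)$, and $\Hom_A(Q,R)$, matching the decomposition of $\Hom_{A^!}(X,Y)$ above. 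The triangle then commutes by inspection: for $T_{(M,Q)} \in \twosilt_{\epsilon_A} A$, the essential-surjectivity computation gives $\xi^{-1}(T_{(M,Q)}) = \mathbf{R}(M) \oplus Q_{A^!} = \varphi(M,Q) = \varphi(H^0(T_{(M,Q)}))$ by the definitions of $\varphi$ and $H^0$. The main obstacle is handling the ghost component; once the identification $\Hom_{A^!}(\mathbf{R}(M), R_{A^!}) \cong \Ext^1_A(M,R)$ is pinned down via tilting theory, the rest of the fully-faithfulness check is routine.
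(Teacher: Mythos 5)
Your construction is correct in substance but takes a genuinely different route from the paper. The paper does not pass through $\mathbf{L}$ and minimal projective presentations at all: it writes an $A^!$-module as a triple $(X^-,X^+,f_X)$ over the upper triangular matrix algebra and defines $\xi(X)$ directly as $(X^-\otimes P_A^{\epsilon_A,-}\xrightarrow{\widehat{f_X}}X^+\otimes P_A^{\epsilon_A,+})$ via the adjunction isomorphism of Lemma \ref{simple proj}; full faithfulness and the identification $\Ext^1_{A^!}(X,Y)\cong\Hom_{\Kb(\proj A)}(\xi X,\xi Y[1])$ then drop out simultaneously from comparing the four-term exact sequences of Lemma \ref{exa} and Lemma \ref{Gab-Roi}. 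This is choice-free, so functoriality and naturality are automatic. Your version buys a transparent commutativity of the triangle (since $\xi^{-1}$ is literally $T_{(M,Q)}\mapsto \mathbf{R}(M)\oplus Q_{A^!}=\varphi(H^0(T))$) at the cost of two things you should not wave away: (i) the assignment on morphisms depends on choices (decomposition of $X$, lift of $M\to N$ to the presentations), and while Lemma \ref{Kb=Cb} makes the lift unique, you still owe a check that $\xi(h\circ g)=\xi(h)\circ\xi(g)$ — the ghost components compose according to the bilinear naturality of the Brenner--Butler isomorphism $\Hom_{A^!}(\mathbf{R}(M),R_{A^!})\cong\Ext^1_A(M,R)$ in both arguments, which is true but is exactly the work the paper's triple description avoids; (ii) for the diagram to "induce" the bijection $\tilt A^!\leftrightarrow\twosilt_{\epsilon_A}A$ one also needs the $\Ext^1$-to-$\Hom(-,-[1])$ comparison (or an appeal to Proposition \ref{stau-tilt}), which your bookkeeping of the three Hom-pieces does supply but which you should state explicitly. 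With those two points filled in, your argument is a valid alternative proof.
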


Let $A$ be a connected hereditary algebra with radical square zero. 
We start with easy lemma. 

\begin{lemma} \label{Kb=Cb}
For any two-term complexes $T,U\in \mathsf{K}^{\rm 2}_{\epsilon_A}(\proj A)$, we have 
$$
\Hom_{\Kb(\proj A)}(T,U) \cong \Hom_{\mathsf{C}^{\rm b}(\proj A)}(T,U).
$$
\end{lemma}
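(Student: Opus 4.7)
The plan is to verify that every null-homotopic chain map from $T$ to $U$ is actually zero as a chain map, which immediately yields the isomorphism: by definition $\Hom_{\Kb(\proj A)}(T,U)$ is the quotient of $\Hom_{\mathsf{C}^{\rm b}(\proj A)}(T,U)$ by the subgroup of null-homotopic maps, and the natural quotient is always surjective. A chain homotopy from $T$ to $U$ is parametrized by a single morphism $s\colon T^0\to U^{-1}$, giving the null-homotopic chain map $(sd_T,\, d_U s)$. So it suffices to show
\[
\Hom_A(T^0,U^{-1})=0.
\]

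To establish this vanishing, recall that by hypothesis $T^0\in\add P_A^{\epsilon_A,+}$ and $U^{-1}\in\add P_A^{\epsilon_A,-}$. When $A$ is non-simple, Lemma \ref{simples} gives $P_A^{\epsilon_A,-}\cong S_A^{\epsilon_A,-}$ as $A$-modules, so $U^{-1}$ is a direct sum of simples $S(j)$ with $j\in\epsilon_A^{-1}(-1)$ a sink. Any morphism $P(i)\to S(j)$ from an indecomposable projective at a source $i\in\epsilon_A^{-1}(+1)$ factors through the simple top $\tops P(i)=S(i)$. Since the valued quiver $\Gamma_A$ is connected and bipartite in the non-simple case, no vertex is simultaneously a source and a sink, so $i\neq j$ and hence $\Hom_A(S(i),S(j))=0$. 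The simple case is immediate because then one of $P_A^{\epsilon_A,+}$, $P_A^{\epsilon_A,-}$ is already zero, forcing either $T^0=0$ or $U^{-1}=0$.

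There is no real obstacle here: the result is a direct consequence of the fact that the bipartite structure of the quiver, combined with the radical-square-zero hypothesis (which makes every projective at a sink simple), kills every homomorphism from $\add P_A^{\epsilon_A,+}$ into $\add P_A^{\epsilon_A,-}$. Once this Hom-vanishing is observed, the absence of non-trivial homotopies and thus the claimed isomorphism follow at once.
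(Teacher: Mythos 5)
Your proof is correct, but the key vanishing is obtained by a different mechanism than in the paper. Both arguments reduce the claim to showing that every null-homotopic chain map is zero, and a homotopy is indeed just a map $s\colon T^0\to U^{-1}$. The paper, however, does not show that $s$ itself vanishes: it observes that $\add T^0\cap\add U^{-1}=0$ forces $s$ to lie in the radical of $\proj A$, and then the compositions $s\circ d_T$ and $d_U\circ s$ are compositions of two radical maps between projectives, hence land in $J_A^2=0$. You instead prove the stronger statement $\Hom_A(T^0,U^{-1})=0$, using the special structure of $\epsilon_A$ for a connected bipartite hereditary algebra: projectives at sinks are simple (Lemma \ref{simples}), equivalently $e^{\epsilon_A,-}Ae^{\epsilon_A,+}=0$ in the upper-triangular presentation of $A$, so there are no nonzero maps from $\add P_A^{\epsilon_A,+}$ to $\add P_A^{\epsilon_A,-}$ at all. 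Your route is more direct in this specific setting, and your treatment of the simple case is fine; the trade-off is that the paper's argument is more robust, as it uses only $\add T^0\cap\add T^{-1}=0$ and $J_A^2=0$ and would therefore apply verbatim to $\Ktwo(\proj A)$ for an arbitrary sign $\epsilon$ on an arbitrary radical-square-zero algebra, whereas your Hom-vanishing genuinely needs the hereditary bipartite situation. One small point of hygiene: objects of $\mathsf{K}^{\rm 2}_{\epsilon_A}(\proj A)$ are only required to be isomorphic to complexes with $T^0\in\add P_A^{\epsilon_A,+}$ and $T^{-1}\in\add P_A^{\epsilon_A,-}$, so you should say (as the paper does) that you first replace $T$ and $U$ by such representatives before invoking the membership of $T^0$ and $U^{-1}$.
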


\begin{proof}
We show that any morphism between complexes which is null-homotopic is zero.  
Let $T=(T^{-1} \overset{d_T}{\rightarrow} T^{0})$ and $U=(U^{-1} \overset{d_U}{\rightarrow} U^{0})$ be complexes in $\mathsf{K}^{\rm 2}_{\epsilon_A}( \proj A)$.  
Up to isomorphism, we can assume each differential $d_T, d_U$ is in the radical of $\proj A$. 
Assume that $(f^{-1}, f^0) \in \Hom_{\Kb(\proj A)} (T,U)$ is null-homotopic. 
By definition, there exists a morphism $h \colon T^0 \rightarrow U^{-1}$ in $\proj A$ such that $f^{-1}=h \circ d_T$ and $f^{0}= d_U \circ h$. 
Since both of $T$ and $U$ are in $\mathsf{K}^{\rm 2}_{\epsilon_A}(\proj A)$, we have  $\add T^0 \cap \add U^{-1}=0$, and hence $h$ is in the radical of the category $\proj A$.  
Since $A$ is an algebra with radical square zero, 
$f^{-1}=h \circ d_T = 0$ and $f^{0}= d_U \circ h = 0$ hold. Thus, the assertion holds.
\end{proof}

Now, for each $\sigma \in \{\pm\}$, let $F^{\epsilon_A, \sigma} := e^{\epsilon_A, \sigma} (A/J_A) e^{\epsilon_A, \sigma}$ be a semisimple algebra.  
Recall that $A^!$ is defined as a upper triangular matrix algebra 
$\begin{pmatrix} F^{\epsilon_A, -} & DJ_A \\ 0 & F^{\epsilon_A, +} \end{pmatrix}$.  
According to \cite[Section III.2]{ARS}, each $A^{!}$-module $X$ can be 
written as a triple $(X^-, X^+, f_X)$ such that $X^+ \in \module F^{\epsilon_A,+}$, 
$X^- \in \module F^{\epsilon_A,-}$ 
and $f_X \colon X^- \otimes_{F^{\epsilon_A,-}} DJ_A \rightarrow X^+$ is a morphism in 
$\module F^{\epsilon_A,+}$.
Next lemma is useful for the vanishing condition of the $\Ext$-group.

\begin{lemma} \label{Gab-Roi} 
In the above, there is a short exact sequence: 
$$
0 \rightarrow \Hom_{A^!}(X,Y) \rightarrow 
\bigoplus_{\sigma \in \{\pm\}} \Hom_{F^{\epsilon_A, \sigma}}(X^{\sigma}, Y^{\sigma}) \overset{\delta}{\rightarrow} 
\Hom_{F^{\epsilon_A, +}}(X^-\otimes_{F^{\epsilon_A, +}}DJ_A, Y^+) \rightarrow 
\Ext^1_{A^!}(X,Y) \rightarrow0,
$$
where $\delta$ maps  $h=(h^-, h^+)$  
onto $h^+f_X-f_Y(h^-\otimes1_{DJ_A})$.
\end{lemma}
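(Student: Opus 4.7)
The approach is to exploit the description of $A^!$-modules as triples and construct an explicit projective resolution of $X = (X^-, X^+, f_X)$ of length one, then apply $\Hom_{A^!}(-,Y)$ term-by-term.

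Since $F^{\epsilon_A,+}$ and $F^{\epsilon_A,-}$ are semisimple, all the $F^{\epsilon_A,\sigma}$-modules appearing below are projective, so the triples
$$P_0^- := (X^-, X^-\otimes_{F^{\epsilon_A,-}} DJ_A, 1), \quad P_0^+ := (0, X^+, 0), \quad P_1 := (0, X^-\otimes_{F^{\epsilon_A,-}} DJ_A, 0)$$
are projective $A^!$-modules (direct sums of copies of $e^-A^!$ and $e^+A^!$, using $e^-A^! \cong (F^{\epsilon_A,-}, DJ_A, 1)$ and $e^+A^! \cong (0, F^{\epsilon_A,+}, 0)$ as triples). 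The map $P_0^-\oplus P_0^+ \to X$ which is the identity on the $-$-component and $(f_X, 1_{X^+})$ on the $+$-component is surjective, with kernel $P_1$ embedded via $x \mapsto (0,(x,-f_X(x)))$. This gives a projective presentation
$$0 \longrightarrow P_1 \longrightarrow P_0^-\oplus P_0^+ \longrightarrow X \longrightarrow 0.$$

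The second step is to apply $\Hom_{A^!}(-,Y)$ to this resolution. Via the standard hom-tensor adjunctions along $e^\sigma A^!$, one identifies $\Hom_{A^!}(P_0^\sigma, Y) \cong \Hom_{F^{\epsilon_A,\sigma}}(X^\sigma, Y^\sigma)$ and $\Hom_{A^!}(P_1, Y) \cong \Hom_{F^{\epsilon_A,+}}(X^-\otimes_{F^{\epsilon_A,-}} DJ_A, Y^+)$. Since $A^!$ is hereditary, $\Ext^{\geq2}_{A^!}(X,Y) = 0$, so the long exact sequence terminates at $\Ext^1_{A^!}(X,Y)$, producing a four-term sequence of the desired shape.

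Finally, I would verify that the connecting map matches $\delta$. Unwinding the adjunction, a pair $(h^-, h^+) \in \Hom_{F^{\epsilon_A,-}}(X^-, Y^-)\oplus \Hom_{F^{\epsilon_A,+}}(X^+, Y^+)$ corresponds to the morphism $P_0^-\oplus P_0^+ \to Y$ whose $+$-component sends $(u,v)\in (X^-\otimes_{F^{\epsilon_A,-}} DJ_A)\oplus X^+$ to $f_Y(h^-\otimes 1)(u)+h^+(v)$. Restricting along $P_1 \hookrightarrow P_0^-\oplus P_0^+$ then yields $x \mapsto f_Y(h^-\otimes 1)(x) - h^+f_X(x)$, which agrees with $\delta$ up to an overall sign (harmless for kernels and images). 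I do not anticipate a serious obstacle; the content is entirely the bookkeeping around the triangular matrix description of $\module A^!$, and the subtlest point is correctly tracking the sign in the connecting morphism, arising from the choice of splitting in the $+$-component of $P_0^-\oplus P_0^+$.
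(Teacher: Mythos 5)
Your proposal is correct. The paper itself does not give an argument at all: it disposes of the lemma in one line by citing the general result of Gabriel--Roiter for representations of a bimodule (triples over a triangular matrix algebra), whereas you reprove that result from scratch. Your route --- the canonical projective presentation $0 \to (0, X^-\otimes DJ_A, 0) \to (X^-, X^-\otimes DJ_A, 1)\oplus(0,X^+,0)\to X \to 0$, followed by $\Hom_{A^!}(-,Y)$ and the adjunction identifications of the three Hom-terms --- is exactly the standard proof of the cited fact, and all the details check out: the three triples are projective because $F^{\epsilon_A,\pm}$ are semisimple, the kernel of the evaluation map is the antidiagonal copy of $X^-\otimes DJ_A$ sitting in the $+$-component (hence carries the zero structure map), and the connecting homomorphism computes to $f_Y(h^-\otimes 1)-h^+f_X=-\delta$, which changes neither kernel nor image. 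One small simplification: you do not need to invoke heredity of $A^!$ to truncate the long exact sequence, since your presentation already has projective kernel, so the sequence ends at $\Ext^1_{A^!}(X,Y)$ because $\Ext^1_{A^!}(P_0^-\oplus P_0^+,Y)=0$. The trade-off is the expected one: the paper's citation is shorter, your version is self-contained and makes the map $\delta$ visibly the differential of an explicit two-term projective resolution, which is in the spirit of how Lemma \ref{exa} is used alongside it in the proof of Theorem \ref{Koszul duality}.
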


\begin{proof}
We just apply the result \cite[Section 7]{GR} for triples $(X^-, X^+, f_X)$ and $(Y^-, Y^+, f_Y)$.
\end{proof}

On the other hand, there is an equivalence of additive categories for each $\sigma\in \{\pm1\}$: 
\begin{equation} \label{S-P}
-\otimes_{F^{\epsilon_A, \sigma}} P_A^{\epsilon_A, \sigma} \colon \module F^{\epsilon_A, \sigma} \overset{\sim}{\longrightarrow} \add P_A^{\epsilon_A, \sigma}.
\end{equation}

A key observation is the following.

\begin{lemma} \label{simple proj}
For any $M \in \mod F^{\epsilon_A,+}$ and $N \in \module F^{\epsilon_A, -}$, we have an isomorphism of bifunctors
\begin{equation}\label{homo}
\widehat{(-)}: \Hom_{F^{\epsilon_A,+}}(N \otimes_{F^{\epsilon_A,-}} DJ_A, M) 
\overset{\sim}{\longrightarrow} \Hom_{A}(N \otimes_{F^{\epsilon_A,-}}P_{A}^{\epsilon_A,-}, M\otimes_{F^{\epsilon_A,+}} P_{A^!}^{\epsilon_A,+}). 
\end{equation}
\end{lemma}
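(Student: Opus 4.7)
The plan is to reduce both sides of (\ref{homo}) to the common bifunctor $\Hom_{F^{\epsilon_A,-}}(N,\, M\otimes_{F^{\epsilon_A,+}} J_A)$ via Hom--tensor adjunction, and then to compose the two identifications to define $\widehat{(-)}$ and verify it is an isomorphism.

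For the right-hand side, I would apply the standard adjunction for the $F^{\epsilon_A,-}$-$A$-bimodule $P_A^{\epsilon_A,-}=e^{\epsilon_A,-}A$, namely $\Hom_A(N\otimes_{F^{\epsilon_A,-}} P_A^{\epsilon_A,-},\,X)\cong \Hom_{F^{\epsilon_A,-}}(N,\,X\,e^{\epsilon_A,-})$, which rests only on the identification $\Hom_A(e^{\epsilon_A,-}A,X)\cong X\,e^{\epsilon_A,-}$ as right $F^{\epsilon_A,-}$-modules. Taking $X=M\otimes_{F^{\epsilon_A,+}}P_A^{\epsilon_A,+}$ and pushing the idempotent $e^{\epsilon_A,-}$ through the tensor product yields $X\,e^{\epsilon_A,-}\cong M\otimes_{F^{\epsilon_A,+}}(e^{\epsilon_A,+}Ae^{\epsilon_A,-})=M\otimes_{F^{\epsilon_A,+}}J_A$, where the last equality is the upper-triangular presentation (\ref{A as upper matrix algebra}) of $A$.

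For the left-hand side I would similarly apply Hom--tensor adjunction for the $F^{\epsilon_A,-}$-$F^{\epsilon_A,+}$-bimodule $DJ_A$, obtaining the isomorphism $\Hom_{F^{\epsilon_A,+}}(N\otimes_{F^{\epsilon_A,-}} DJ_A,\,M)\cong \Hom_{F^{\epsilon_A,-}}(N,\,\Hom_{F^{\epsilon_A,+}}(DJ_A,\,M))$. What remains---the main substantive point of the lemma---is to identify $\Hom_{F^{\epsilon_A,+}}(DJ_A,M)$ with $M\otimes_{F^{\epsilon_A,+}}J_A$ as right $F^{\epsilon_A,-}$-modules. This uses that $F^{\epsilon_A,+}$ is a finite-dimensional semisimple $k$-algebra, so every finite-dimensional $F^{\epsilon_A,+}$-module is projective, and the canonical comparison map induced by the evaluation pairing $J_A\otimes_k DJ_A\to k$ is an isomorphism.

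The principal obstacle is the careful bookkeeping required to see that this last identification respects the right $F^{\epsilon_A,-}$-module structures on both sides---arising from the left $F^{\epsilon_A,-}$-action on $DJ_A$ and the right $F^{\epsilon_A,-}$-action on $J_A$, respectively---so that the resulting composite isomorphism is natural in both $N$ and $M$. Once this is in place, $\widehat{(-)}$ is defined as the composition of the two adjunction-reductions above, and bifunctoriality in $(N,M)$ is automatic from the naturality of Hom--tensor adjunction.
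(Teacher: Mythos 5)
Your proposal is correct and takes essentially the same route as the paper, which proves the lemma by chaining exactly the identifications you describe: the hom--tensor adjunction over $DJ_A$ together with the duality $\Hom_{F^{\epsilon_A,+}}(DJ_A,M)\cong M\otimes_{F^{\epsilon_A,+}}J_A$, the equality $J_A=e^{\epsilon_A,+}Ae^{\epsilon_A,-}$ giving $M\otimes_{F^{\epsilon_A,+}}J_A\cong \Hom_A(e^{\epsilon_A,-}A, M\otimes_{F^{\epsilon_A,+}}e^{\epsilon_A,+}A)$, and a second adjunction over $P_A^{\epsilon_A,-}=e^{\epsilon_A,-}A$. The only difference is presentational (you reduce both sides to the common term $\Hom_{F^{\epsilon_A,-}}(N, M\otimes_{F^{\epsilon_A,+}}J_A)$, while the paper composes the isomorphisms in one chain), and your explicit justification of the duality step via semisimplicity of $F^{\epsilon_A,+}$ is, if anything, more careful than the paper's.
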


\begin{proof}
Let $M \in \mod F^{\epsilon_A,+}$ and $N \in \module F^{\epsilon_A,-}$. 
Note that $J_A= e^{\epsilon_A, +} A e^{\epsilon_A, -}$ holds. 
By the hom-tensor adjoint, we have 
\begin{eqnarray*}
\Hom_{F^{\epsilon_A,+}}(N\otimes_{F^{\epsilon_A,-}} DJ_A, M) &\cong& \Hom_{F^{\epsilon_A,-}}(N, M \otimes_{F^{\epsilon_A,+}} J_A) 
\\ &\cong& \Hom_{F^{\epsilon_A,-}}(N, \Hom_{A}(e^{\epsilon_A,-} A, M\otimes_{F^{\epsilon_A,+}} e^{\epsilon_A,+}A)) \\ 
&\cong& \Hom_{A}(N \otimes_{F^{\epsilon_A,-}} P_{A}^{\epsilon_A,-} , M \otimes_{F^{\epsilon_A,+}}P_{A^!}^{\epsilon_A,+}).
\end{eqnarray*}
Thus, we obtain the desired isomorphism.
It is easy to check that it is a natural transformation.
\end{proof}

Now, we are ready to prove Theorem \ref{Koszul duality}.

\begin{proof}[Proof of Theorem \ref{Koszul duality}]
We construct an equivalence $\xi$. 
For any $X\in \module A^{!}$, we set a two-term complex
$$
\xi(X) :=((X^-\otimes_{F^{\epsilon_A,-}}P_{A}^{\epsilon_A,-}) \overset{\widehat{f_X}}{\longrightarrow} (X^+ \otimes_{F^{\epsilon_A,+}}P_{A}^{\epsilon_A,+})) \in \mathsf{K}^{\rm 2}_{\epsilon_A}(\proj A), 
$$
where $(X^-,X^+, f_X)$ is the corresponding triple to $X$, and
$\widehat{f_X}$ is a morphism given by Lemma \ref{simple proj}. 

Let $X, Y \in \module A^!$. 
By Lemma \ref{Kb=Cb}, we have 
$\Hom_{\Kb(\proj A)}(\xi(X), \xi(Y))=\Hom_{\mathsf{C}^{\rm b}(\proj A)}(\xi(X), \xi(Y))$.
By Lemma \ref{exa} and \ref{Gab-Roi}, 
we obtain the following diagram: 
$$\hspace{-14mm} 
\scalebox{0.85}{
\xymatrix@C14pt
{
0  \ar[r] 
& \Hom_{A^!}(X,Y) \ar[r] \ar@{.>}[d]^{\xi}_{\rotatebox{90}{$\sim$}}
& {\displaystyle\bigoplus_{\sigma\in \{\pm\}}} \Hom_{F^{\epsilon_A, \sigma}}(X^\sigma, Y^\sigma) \ar[r] \ar[d]_{\rotatebox{90}{$\sim$}}^{(\ref{S-P})} \ar@{}[rd]|{\circlearrowright}
& \Hom_{F^{\epsilon_A,+}}(X^- \otimes_{F^{\epsilon_A,-}}DJ_A, Y^+) \ar[r] \ar[d]_{\rotatebox{90}{$\sim$}}^{\widehat{(-)}} 
& \Ext^1_{A^!}(X,Y) \ar[r] \ar@{.>}[d]_{\rotatebox{90}{$\sim$}}
& 0\\ 
0  \ar[r] 
& \Hom_{\Kb(\proj A)}(\xi(X),\xi(Y)) \ar[r]  
& {\displaystyle \bigoplus_{i\in \{-1,0\}}} \Hom_{A}(\xi(X)^i, \xi(Y)^i) \ar[r] 
& \Hom_{A}(\xi(X)^{-1}, \xi(Y)^0) \ar[r]  
& \Hom_{\Kb(\proj A)}(\xi(X), \xi(Y)[1]) \ar[r]  
& 0
}}
$$  
Since the square in center commutes,
the map $\xi$ in the above diagram is induced, 
and it gives rise to a fully faithful functor $\xi \colon \module A^! \rightarrow \mathsf{K}^{\rm 2}_{\epsilon_A}(\proj A)$. 
Moreover, it is dense by Lemma \ref{simple proj}. Therefore, $\xi$ is an equivalence.  

On the other hand, from the above diagram, we have 
$$
\Ext^1_{A^!}(X,Y) \cong \Hom_{\Kb(\proj A)}(\xi(X), \xi(Y)[1]) 
$$
for any $X, Y \in \module A^!$. 
Since $\xi$ gives a bijection of isomorphism classes of indecomposable objects, 
it induces a bijection $\xi \colon \tilt A^! \rightarrow \twosilt_{\epsilon_A} A$  
by Proposition \ref{properties of two-silt}(1).  
Furthermore, $g_{A}^{\xi(X)} = {\rm B}_{\epsilon_A} \cdot c_{A^!}^X$ holds for any $X \in \module A^!$ by our construction. 
The commutativity of the diagram in the assertion follows from Proposition \ref{properties of g-vectors}(1) and Theorem \ref{g-c-vector}. 
\end{proof}

\begin{remark} \rm \label{extend to non-connected}
We can naturally extend Theorem \ref{hereditary case} and Proposition \ref{stau-tilt} to non-connected hereditary algebras $A$ with radical square zero in the following way: 
Suppose that $\mathcal{S}(A):=\{e_1,\ldots, e_n\}$ be a complete set of primitive orthogonal idempotents of $A$ and $\Gamma_A$ a valued quiver of $A$ with vertex set $1,\ldots,n$. 
Each vertex $i\in [n]$ of $\Gamma_A$ is either a source, a sink or isolated. 
Take a map $\epsilon_A \colon [n] \rightarrow \{\pm1\}$ such that  
$\epsilon(i)=1$ if $i$ is a source, $\epsilon(i)=-1$ if $i$ is a sink, and $\epsilon(i) \in \{\pm1\}$ if $i$ is isolated. 
If $B$ is a block of $A$, then its valued quiver 
$\Gamma_B$ corresponds to some connected component of $\Gamma_A$.
Suppose that $1,\ldots, k$ be the set of vertices of $\Gamma_B$ ($1\leq k \leq n$),
then the restriction $\epsilon_A|_{[k]} \colon [k] \rightarrow \{\pm1\}$ gives the map defined in Section \ref{torsion class for hereditary}, namely, $\epsilon_B:=\epsilon_A|_{[k]}$. 
By Theorem \ref{hereditary case}, we have an isomorphism $\tors_{\epsilon_B} B \rightarrow \fators B^!$.

Consequently, if $A$ is decomposed into $A=B_1\times \cdots \times B_m$ with $B_i$ connected algebras, then we obtain a family of isomorphisms $R_i \colon \tors_{\epsilon_{B_i}} B_i \rightarrow \fators B_i^!$ for all $i=1,\ldots, m$ induced by $\epsilon_A$.
If we set $A^!:=B_1^! \times \cdots \times B_m^!$, then it is easy to see that $R_1,\ldots, R_m$
gives rise to a unique isomorphism of partially ordered sets
$$
R \colon \tors_{\epsilon_A} A \rightarrow \fators A^!
$$
such that $R|_{\tors B_i} = R_i$ for all $i = 1,\ldots,m$. 
Similarly, we get an isomorphism $\stau_{\epsilon_A} A \rightarrow \tilt A^!$ from Theorem \ref{stau-tilt}. 
\end{remark}

\section{Torsion classes of algebras with radical square zero} 
\label{algebras with RSZ}

In this section, we study torsion classes of an arbitrary algebra with radical square zero. 
Throughout this section, let $A$ be an algebra with radical square zero and $\mathcal{S}(A)=\{e_1, \ldots, e_n\}$ a complete set of primitive orthogonal idempotents of $A$. 
To each map $\epsilon \colon [n] \rightarrow \{\pm1\}$, we associate the following upper triangular matrix algebras: 
$$
A_{\epsilon}:=\begin{pmatrix}
e^{\epsilon,+}(A/J_A)e^{\epsilon,+} & e^{\epsilon,+} J_A e^{\epsilon,-} \\ 
0 & e^{\epsilon,-}(A/J_A) e^{\epsilon,-}
\end{pmatrix} \ and \
A_{\epsilon}^! := \begin{pmatrix} 
e^{\epsilon,-}(A/J_A)e^{\epsilon,-} & D(e^{\epsilon,+} J_A e^{\epsilon,-}) \\ 
0   &  e^{\epsilon,+}(A/J_A)e^{\epsilon,+}
\end{pmatrix}. 
$$

The following is basic observations. 
 
\begin{proposition} \label{A and A epsilon}
\begin{enumerate}
\item $A_{\epsilon}$ and $A_{\epsilon}^!$ are hereditary algebras with radical square zero. 
\item The Jacobson radical of $A_{\epsilon}$ (resp. $A_{\epsilon}^!$) is 
$e^{\epsilon,+} J_A e^{\epsilon,-}$ (resp. $D(e^{\epsilon,+} J_A e^{\epsilon,-})$).
\item $A_{\epsilon}\cong A/I_{\epsilon}$, where 
$I_{\epsilon}=e^{\epsilon,+}J_A e^{\epsilon,+} + e^{\epsilon,-}J_A e^{\epsilon,+} + 
e^{\epsilon,-} J_A e^{\epsilon,-}$ is an two-sided ideal in $A$.
\end{enumerate}
\end{proposition}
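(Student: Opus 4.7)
The plan is to work out the three parts more or less in reverse order, since (3) gives the most concrete handle on $A_\epsilon$ and (2) and (1) fall out of that description together with the radical-square-zero hypothesis on $A$.

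First I would verify (3). Writing $A$ as an internal direct sum
\[
A = e^{\epsilon,+}Ae^{\epsilon,+} \oplus e^{\epsilon,+}Ae^{\epsilon,-} \oplus e^{\epsilon,-}Ae^{\epsilon,+} \oplus e^{\epsilon,-}Ae^{\epsilon,-},
\]
I would observe that because $A/J_A$ is semisimple with primitive central idempotents separating the $e_i$, the cross terms $e^{\epsilon,\sigma}(A/J_A)e^{\epsilon,-\sigma}$ vanish, so $e^{\epsilon,+}Ae^{\epsilon,-}=e^{\epsilon,+}J_Ae^{\epsilon,-}$ and likewise for the opposite cross term. Thus $I_\epsilon$ is obtained by taking, inside each of the four blocks, the full $J_A$-piece except for $e^{\epsilon,+}J_Ae^{\epsilon,-}$. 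That $I_\epsilon$ is a two-sided ideal is immediate from $J_A^2=0$: any product of an element of $I_\epsilon$ with an element of $A$ either lies in $J_A\cdot J_A=0$ or remains in one of the three forbidden blocks. The quotient $A/I_\epsilon$ is then canonically a direct sum of $e^{\epsilon,+}(A/J_A)e^{\epsilon,+}$, $e^{\epsilon,-}(A/J_A)e^{\epsilon,-}$, and $e^{\epsilon,+}J_Ae^{\epsilon,-}$, and the multiplication it inherits matches exactly the upper triangular matrix multiplication on $A_\epsilon$ (the bimodule structure on $e^{\epsilon,+}J_Ae^{\epsilon,-}$ factors through $A/J_A$ on both sides precisely because $J_A^2=0$). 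This gives the isomorphism $A/I_\epsilon\cong A_\epsilon$.

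For (2), once $A_\epsilon$ is exhibited as an upper triangular matrix algebra with semisimple diagonal entries $e^{\epsilon,\pm}(A/J_A)e^{\epsilon,\pm}$ and off-diagonal bimodule $e^{\epsilon,+}J_Ae^{\epsilon,-}$, the off-diagonal part is a nilpotent two-sided ideal (its square is zero because any product lies in the lower-left block, which is $0$) and the quotient by it is the semisimple algebra $e^{\epsilon,+}(A/J_A)e^{\epsilon,+}\times e^{\epsilon,-}(A/J_A)e^{\epsilon,-}$. Hence it must coincide with the Jacobson radical. The same argument applied to $A_\epsilon^!$, whose off-diagonal bimodule is $D(e^{\epsilon,+}J_Ae^{\epsilon,-})$, gives the corresponding identification. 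In particular both radicals square to zero.

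For (1), hereditariness follows from the standard fact that for a triangular matrix ring $\bigl(\begin{smallmatrix} R & M \\ 0 & S \end{smallmatrix}\bigr)$ with $R$ and $S$ semisimple the global dimension is at most $1$; see for instance \cite[Section III.2]{ARS}. Both $A_\epsilon$ and $A_\epsilon^!$ fit this template, so both are hereditary. Combined with (2), this completes the proof. The only mildly delicate point is checking that the bimodule action on $e^{\epsilon,+}J_Ae^{\epsilon,-}$ (and dually on $D(e^{\epsilon,+}J_Ae^{\epsilon,-})$) really descends from $A$ to $A/J_A$ on both sides; as noted above, this is where $J_A^2=0$ is used, and it is the step I would be most careful to spell out.
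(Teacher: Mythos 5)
Your proposal is correct and follows essentially the same route as the paper: the paper also verifies that $I_{\epsilon}$ is a two-sided ideal by a Peirce-block computation using $J_A^2=0$, identifies $A/I_{\epsilon}$ with the triangular matrix algebra via the decomposition $A=\sum_{\sigma,\sigma'}e^{\epsilon,\sigma}Ae^{\epsilon,\sigma'}$, and deduces (1) and (2) from the standard facts about upper triangular matrix algebras with semisimple diagonal cited from \cite[Section III.2]{ARS}. You merely spell out the ARS citations (nilpotency of the off-diagonal ideal, semisimplicity of the quotient, global dimension at most one) in slightly more detail.
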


\begin{proof}
(1): It follows from \cite[Section III.2]{ARS}. 

(2): Since $A_{\epsilon}$ and $A^!_{\epsilon}$ are defined as upper triangular matrix algebra, the assertion follows from \cite[Section III.2, Proposition 2.5(a)]{ARS}.

(3): Firstly, we check that $I_{\epsilon}$ is a two-sided ideal in $A$. 
Using $J_A^2=0$, we have 
\begin{eqnarray*}
I_{\epsilon}A &=& 
e^{\epsilon,+}J_A e^{\epsilon,+}A + e^{\epsilon,-}J_A e^{\epsilon,+}A + 
e^{\epsilon,-} J_A e^{\epsilon,-}A \\ &=&
e^{\epsilon,+}(J_A e^{\epsilon,+}A)e^{\epsilon,+} + 
e^{\epsilon,-}(J_A e^{\epsilon,+}A) e^{\epsilon,+} + 
e^{\epsilon,-}(J_A e^{\epsilon,-}A) e^{\epsilon,-} \\ &\subseteq &
e^{\epsilon,+} J_A e^{\epsilon,+} + 
e^{\epsilon,-} J_A  e^{\epsilon,+} + 
e^{\epsilon,-} J_A e^{\epsilon,-} = I_{\epsilon}
\end{eqnarray*}
Therefore, we have $I_{\epsilon} A \subseteq I_{\epsilon}$. 
Similarly, we have $A I_{\epsilon} \subseteq I_{\epsilon}$, and hence $I_{\epsilon}$ is a two-sided ideal in $A$.
By comparing $A_{\epsilon}$ with 
a decomposition 
$A = e^{\epsilon,+}A e^{\epsilon,+} + e^{\epsilon,+} A e^{\epsilon,-} + e^{\epsilon,-} A e^{\epsilon,+} +e^{\epsilon,-} Ae^{\epsilon,-}$, we get the assertion.
\end{proof}

From our construction, there is a natural bijection $\mathcal{S}(A) \rightarrow \mathcal{S}(A_{\epsilon}):=\{e'_1,\ldots, e'_n\}$ mapping $e_i \mapsto e_i'$ for all $i=1,\ldots, n$.

\begin{remark} \rm \label{components}
Let $\Gamma=\Gamma_A$ be a valued quiver of $A$ and $\epsilon \colon [n] \rightarrow \{\pm1\}$ a map. 
Let $\Gamma_{\epsilon}$ be a valued quiver given by
\begin{itemize}
\item the set of vertices is the same as $\Gamma$, and 
\item the set of arrows in $\Gamma_{\epsilon}$ is 
$$\{\text{$i \overset{(d_{ij}', d_{ij}'')}{\longrightarrow} j$ in $\Gamma$} \mid \text{$\epsilon(i)=1$, $\epsilon(j)=-1$}\}.$$
\end{itemize}
Then it gives a valued quiver of $A_{\epsilon}$. 
In particular, this $\epsilon$ gives a map $\epsilon_{A_{\epsilon}}=\epsilon \colon [n] \rightarrow \{\pm1\}$ for $A_{\epsilon}$ discussed in Remark \ref{extend to non-connected} because $\epsilon(i)=1, -1$ if $i$ is a source and a sink respectively.
\end{remark}

\begin{example} \rm \label{example tri}
Let $Q$ be a finite quiver and 
$A:=kQ/I$ where $I$ is a two-sided ideal generated by all path of length $2$. 
Then $A$ is an algebra with radical square zero whose (valued) quiver is $Q$.
In this situation, we have isomorphisms $A_{\epsilon}\cong kQ_{\epsilon}$ and 
$A_{\epsilon}^! \cong kQ_{\epsilon}^{\rm op}$ for any $\epsilon \in \{\pm1\}^n$, where $Q_{\epsilon}$ is a quiver defined in Remark \ref{components}. 

Let $Q=\begin{xy}(5,3)*{1}="1", (10,-2)*{2}="2",  (0,-2)*{3}="3",
{"1" \ar "2"}, {"2" \ar "3"}, {"3" \ar "1"}, 
\end{xy}$ be a quiver. For each $\epsilon \in \{\pm1\}^3$, the quiver $Q_{\epsilon}$ is given as follows:

\begin{table}[h]
\begin{tabular}{|c|c|c|c|}
\hline 
\begin{xy}
(-10,0)*{Q_{(1,1,1)} \colon}, (5,3)*{1}="1", (10,-2)*{2}="2", (0,-2)*{3}="3", 
\end{xy} & \begin{xy}
(-10,0)*{Q_{(-1,1,1)} \colon}, (5,3)*{1}="1", (10,-2)*{2}="2", (0,-2)*{3}="3",  {"3" \ar "1"}
\end{xy} &
\begin{xy}
(-10,0)*{Q_{(1,-1,1)} \colon}, (5,3)*{1}="1", (10,-2)*{2}="2", (0,-2)*{3}="3", {"1" \ar "2"}
\end{xy} &
\begin{xy}
(-10,0)*{Q_{(1,1,-1)} \colon}, (5,3)*{1}="1", (10,-2)*{2}="2", (0,-2)*{3}="3", {"2" \ar "3"}
\end{xy} \\ \hline
\begin{xy}
(-10,0)*{Q_{(-1,-1,1)} \colon}, (5,3)*{1}="1", (10,-2)*{2}="2", (0,-2)*{3}="3", {"3" \ar "1"}
\end{xy} & 
\begin{xy}
(-10,0)*{Q_{(1,-1,-1)} \colon}, (5,3)*{1}="1", (10,-2)*{2}="2", (0,-2)*{3}="3", {"1" \ar "2"}
\end{xy}&
\begin{xy}
(-10,0)*{Q_{(-1,1,-1)} \colon}, (5,3)*{1}="1", (10,-2)*{2}="2", (0,-2)*{3}="3", {"2" \ar "3"}
\end{xy}&
\begin{xy}
(-12,0)*{Q_{(-1,-1,-1)} \colon}, (5,3)*{1}="1", (10,-2)*{2}="2", (0,-2)*{3}="3",
\end{xy} \\  \hline
\end{tabular}
\end{table}
\end{example}

\subsection{Torsion classes} 
An aim of this section is to prove the following result.

\begin{theorem} \label{tors-fators}
Let $A$ be a finite dimensional  algebra with radical square zero. 
Let $\mathcal{S}(A):=\{e_1,\ldots, e_n\}$ be a complete set of primitive orthogonal idempotents of $A$.
For each map $\epsilon \colon [n] \rightarrow \{\pm1\}$, 
there are isomorphisms of partially ordered sets between
\begin{enumerate}
\item $\tors_{\epsilon} A$,
\item $\tors_{\epsilon} A_{\epsilon}$,
\item $\fators A_{\epsilon}^!$.
\end{enumerate}

In particular, there is a bijection
$$
\tors A \ \overset{1-1}{\longleftrightarrow} \ \bigsqcup_{\epsilon \colon [n] \rightarrow \{\pm1\}} \fators A_{\epsilon}^!. 
$$
\end{theorem}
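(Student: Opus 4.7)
My plan is to split the theorem into two bijections, $(1)\leftrightarrow(2)$ and $(2)\leftrightarrow(3)$, which combine with the sign-decomposition $\tors A = \bigsqcup_\epsilon \tors_\epsilon A$ to give the displayed disjoint-union description of $\tors A$.

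The bijection $(2)\leftrightarrow(3)$ is essentially already known. By Proposition \ref{A and A epsilon}(1), $A_\epsilon$ is hereditary with radical square zero, and Remark \ref{components} identifies its valued quiver: every non-isolated vertex is a source precisely when $\epsilon(i)=+$ and a sink precisely when $\epsilon(i)=-$. Choosing the sign on isolated vertices to agree with $\epsilon$ (as Remark \ref{extend to non-connected} permits), the associated sign map of $A_\epsilon$ becomes $\epsilon$ itself; a direct comparison of the upper-triangular matrix presentations shows $(A_\epsilon)^! = A_\epsilon^!$. Theorem \ref{hereditary case}, extended to the non-connected hereditary case via Remark \ref{extend to non-connected}, then yields the order isomorphism $\tors_\epsilon A_\epsilon \cong \fators A_\epsilon^!$.

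For the main bijection $(1)\leftrightarrow(2)$, I use that $A_\epsilon \cong A/I_\epsilon$ by Proposition \ref{A and A epsilon}(3), so Proposition \ref{sign and factor} provides a sign-preserving restriction $\overline{(-)}\colon \tors_\epsilon A \to \tors_\epsilon A_\epsilon$ defined by $\mathcal{T}\mapsto \mathcal{T}\cap \module A_\epsilon$. As inverse I propose
\[
\widetilde{\mathcal{U}} := \{M \in \Fac P_A^{\epsilon,+} \mid \bar M \in \mathcal{U}\}, \qquad \bar M := M/MI_\epsilon \in \module A_\epsilon,
\]
for each $\mathcal{U} \in \tors_\epsilon A_\epsilon$. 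Both maps preserve inclusions, so it only remains to verify that they are mutually inverse and that $\widetilde{\mathcal{U}} \in \tors_\epsilon A$. The technical core is the radical computation that $MI_\epsilon$ is semisimple with composition factors in $\add S_A^{\epsilon,+}$ for every $M \in \Fac P_A^{\epsilon,+}$: $\tops M \in \add S_A^{\epsilon,+}$ forces $Me^{\epsilon,-} \subseteq \rad M$, and $J_A^2=0$ makes $\rad M$ annihilated by $J_A$, so expanding
\[
MI_\epsilon = Me^{\epsilon,+}J_A e^{\epsilon,+} + Me^{\epsilon,-}J_A e^{\epsilon,+} + Me^{\epsilon,-}J_A e^{\epsilon,-}
\]
the last two summands vanish and the first lies in $Me^{\epsilon,+}\cap \rad M \subseteq \add S_A^{\epsilon,+}$. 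Consequently, for $\mathcal{T} \in \tors_\epsilon A$ containing $M$ the short exact sequence $0\to MI_\epsilon \to M \to \bar M \to 0$ has both end terms in $\overline{\mathcal{T}}$ (the submodule because $\add S_A^{\epsilon,+} \subseteq \mathcal{T}$, the quotient as a factor of $M$ lying in $\module A_\epsilon$), whence $\widetilde{\overline{\mathcal{T}}} = \mathcal{T}$; and $\overline{\widetilde{\mathcal{U}}}=\mathcal{U}$ is immediate since $M\in \module A_\epsilon$ forces $\bar M = M$ while $\mathcal{U} \subseteq \Fac P_{A_\epsilon}^{\epsilon,+}$ by Lemma \ref{simple-injective}(1) applied to $A_\epsilon$. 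The step I expect to be most delicate is confirming that $\widetilde{\mathcal{U}}$ is closed under extensions in $\module A$: for $0\to M' \to M \to M'' \to 0$ with $M', M'' \in \widetilde{\mathcal{U}}$, one controls $\tops M$ and $\bar M$ via the right-exact sequences obtained by applying $-\otimes_A A/J_A$ and $-\otimes_A A_\epsilon$, placing $M \in \Fac P_A^{\epsilon,+}$ and exhibiting $\bar M$ as an extension of $\bar M''$ by a factor of $\bar M'$ in $\module A_\epsilon$, which then lies in $\mathcal{U}$.
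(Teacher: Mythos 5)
Your proof is correct and follows the paper's strategy closely: the bijection $(2)\leftrightarrow(3)$ is obtained exactly as in the paper from Remark \ref{components}, Remark \ref{extend to non-connected} and Theorem \ref{hereditary case}, and the bijection $(1)\leftrightarrow(2)$ uses the same restriction map $\overline{(-)}$ of Proposition \ref{sign and factor} together with the same key computation (your analysis of $MI_\epsilon$ is precisely the content of Lemma \ref{mod-str}(b): for $M\in\Fac P_A^{\epsilon,+}$ the module $MI_\epsilon$ is semisimple and lies in $\add S_A^{\epsilon,+}$). The only genuine difference is how the inverse of $\overline{(-)}$ is packaged. The paper takes $\mathcal{U}\mapsto\mathsf{T}(\mathcal{U})$, the smallest torsion class of $\module A$ containing $\mathcal{U}$, so that being a torsion class is automatic and the work goes into showing that the fibre of $\overline{(-)}$ over $\mathcal{U}$ is the singleton $\{\mathsf{T}(\mathcal{U})\}$, via the exact sequence $0\to X\otimes_A I_\epsilon\to X\to F_\epsilon(X)\to 0$. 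You instead give the explicit description $\widetilde{\mathcal{U}}=\{M\in\Fac P_A^{\epsilon,+}\mid \bar M\in\mathcal{U}\}$, which makes the mutual-inverse check immediate but transfers the work to verifying that $\widetilde{\mathcal{U}}$ is a torsion class of the correct signature; your sketch of the extension-closure argument (right-exactness of $-\otimes_A A/J_A$ and $-\otimes_A A_\epsilon$, plus extension-closure of $\Fac P_A^{\epsilon,+}$) is sound, and closure under factor modules is immediate for the same reasons. A small bonus of your version is that working with the submodule $MI_\epsilon$ directly, rather than with $M\otimes_A I_\epsilon$, sidesteps the question of injectivity of $M\otimes_A I_\epsilon\to M$, which the paper handles through the split bottom row in the diagram of Lemma \ref{mod-str}(b). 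In the end $\mathsf{T}(\mathcal{U})=\widetilde{\mathcal{U}}$, so the two inverses coincide and the two arguments are interchangeable.
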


From now on, we prove Theorem \ref{tors-fators} for a fixed map $\epsilon \colon [n] \rightarrow \{\pm1\}$. 
By Proposition \ref{A and A epsilon}(3), $A_{\epsilon}\cong A/I_{\epsilon}$ where $I_{\epsilon} = e^{\epsilon,+}J_Ae^{\epsilon,+} +  e^{\epsilon,-}J_Ae^{\epsilon,+} +  e^{\epsilon,-}J_Ae^{\epsilon,-} 
$. So, $\module A_{\epsilon}$ can be regarded as a full subcategory of $\module A$. 
On the other hand, there is a full functor
$$
F_{\epsilon}:= - \otimes_{A} A_{\epsilon} \colon \module A \longrightarrow \module A_{\epsilon}.
$$

\begin{lemma} 
\label{mod-str} The following hold. 
\begin{enumerate}
\item[(a)] $P_{A_{\epsilon}}^{\epsilon,-} \cong S_{A}^{\epsilon,-}$ and $\rad P_{A_{\epsilon}}^{\epsilon,+} \in \add S_A^{\epsilon,-}$ in $\module A$.

\item[(b)] For an $A$-module $M$, we have the following commutative diagram of short exact sequences in $\module A$. 
$$\hspace{10mm}
\xymatrix{
   &    & 0 &0  &  \\
   &    & \tops M  \ar^{\sim \quad}[r] \ar[u] & \tops F_{\epsilon}(M) \ar[u] \ar[r] & 0\\
 0 \ar[r] & M\otimes_A I_{\epsilon} \ar[r] & M  \ar[r]\ar[u] & F_{\epsilon}(M) \ar[u] \ar[r] &0\\
 0  \ar[r] & M \otimes_{A} I_{\epsilon} \ar[r] \ar@{=}[u] &  \rad M \ar[r]\ar[u] & \rad F_{\epsilon}(M) \ar[r]\ar[u]  & 0 &\text{\rm (split)}\\
   &    &  0 \ar[u]  & 0 \ar[u]  &  
}
$$
In particular, 
there is an isomorphism 
$\rad M \cong \rad F_{\epsilon}(M)  \oplus (M \otimes_A I_{\epsilon})$ of semisimple modules. 
Moreover, 
\begin{itemize}
\item if $M \in \Fac P_{A}^{\epsilon,+}$, then $M\otimes_A I_{\epsilon} \in \add S_{A}^{\epsilon,+}$ and  $\rad F_{\epsilon}(M) \in \add S_A^{\epsilon,-}$, and 
\item if $M \in \Fac P_{A}^{\epsilon,-}$, then $F_{\epsilon}(M) \in \add S_A^{\epsilon,-}$. 
\end{itemize} 
\end{enumerate}
\end{lemma}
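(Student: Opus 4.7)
My plan is to exploit the identification $A_\epsilon \cong A/I_\epsilon$ from Proposition \ref{A and A epsilon}(3) and the hypothesis $J_A^2 = 0$ throughout. For part (a), I will compute $P_{A_\epsilon}^{\epsilon,-} = e^{\epsilon,-} A_\epsilon$ and $\rad P_{A_\epsilon}^{\epsilon,+} = e^{\epsilon,+} A_\epsilon \cdot J_{A_\epsilon}$ directly. Decomposing $e^{\epsilon,-}A = e^{\epsilon,-}J_A e^{\epsilon,+} \oplus e^{\epsilon,-}(A/J_A)e^{\epsilon,-} \oplus e^{\epsilon,-}J_A e^{\epsilon,-}$ and noting that $e^{\epsilon,-}I_\epsilon = e^{\epsilon,-}J_A e^{\epsilon,+} + e^{\epsilon,-}J_A e^{\epsilon,-}$, passage to the quotient yields $e^{\epsilon,-}A_\epsilon \cong e^{\epsilon,-}(A/J_A)e^{\epsilon,-} = S_A^{\epsilon,-}$ as $A$-modules. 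For the second assertion, Proposition \ref{A and A epsilon}(2) supplies $J_{A_\epsilon} = e^{\epsilon,+}J_A e^{\epsilon,-}$, so $\rad P_{A_\epsilon}^{\epsilon,+} \subseteq A e^{\epsilon,-}$; since $J_A^2 = 0$, this submodule is annihilated by $J_A$ on the right, hence semisimple and contained in $\add S_A^{\epsilon,-}$.

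For part (b), I first identify $F_\epsilon(M) = M/MI_\epsilon$, interpreting $M \otimes_A I_\epsilon$ as the image $MI_\epsilon \subseteq M$, so the top horizontal sequence is simply the defining quotient. Because $I_\epsilon \subseteq J_A$, we have $MI_\epsilon \subseteq MJ_A = \rad M$, and since $J_A/I_\epsilon = J_{A_\epsilon}$ inside $A_\epsilon$, the $A$-radical of $F_\epsilon(M)$ agrees with its $A_\epsilon$-radical; namely $\rad F_\epsilon(M) = F_\epsilon(M) J_A = MJ_A/MI_\epsilon$. This produces the bottom row $0 \to MI_\epsilon \to \rad M \to \rad F_\epsilon(M) \to 0$. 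Applying the snake lemma to the commutative diagram of radical--module--top short exact sequences, the two left-hand kernels are both $MI_\epsilon$ and cancel, forcing $\tops M \cong \tops F_\epsilon(M)$. The splitting of the bottom row is immediate: $\rad M = MJ_A$ is annihilated by $J_A$ on the right (because $J_A^2 = 0$), hence semisimple as an $A$-module, so any submodule is a direct summand.

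For the two bullet points, I invoke the right exactness of $F_\epsilon$ together with the computations $F_\epsilon(P_A^{\epsilon,+}) = P_{A_\epsilon}^{\epsilon,+}$ and $F_\epsilon(P_A^{\epsilon,-}) = P_{A_\epsilon}^{\epsilon,-}$, the latter isomorphic to $S_A^{\epsilon,-}$ by part (a). If $M \in \Fac P_A^{\epsilon,+}$, then $F_\epsilon(M) \in \Fac P_{A_\epsilon}^{\epsilon,+}$, and Lemma \ref{simple-injective}(2) applied to the hereditary algebra $A_\epsilon$ (with its signature $\epsilon$ via Remark \ref{components}) gives $\rad F_\epsilon(M) \in \add S_{A_\epsilon}^{\epsilon,-} = \add S_A^{\epsilon,-}$. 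For the summand $MI_\epsilon$, the direct calculation $e^{\epsilon,+}A \cdot I_\epsilon = e^{\epsilon,+}I_\epsilon = e^{\epsilon,+}J_A e^{\epsilon,+}$ shows this module is semisimple and supported at $+$, so $MI_\epsilon$, being a quotient of a module in $\add S_A^{\epsilon,+}$, also lies in $\add S_A^{\epsilon,+}$. The second bullet is even more direct: $F_\epsilon(M) \in \Fac F_\epsilon(P_A^{\epsilon,-}) = \Fac S_A^{\epsilon,-} = \add S_A^{\epsilon,-}$.

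The main obstacle, such as it is, lies in bookkeeping: carefully tracking which ring structure ($A$ versus $A_\epsilon$) acts on each module and verifying that the restriction of scalars along $A \twoheadrightarrow A_\epsilon$ preserves tops and radicals. All the structural content ultimately reduces to the single computational fact $J_A^2 = 0$ together with the explicit description of $I_\epsilon$.
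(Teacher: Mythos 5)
Your proof is correct and follows the same basic strategy as the paper: reduce everything to the presentation $A_{\epsilon}\cong A/I_{\epsilon}$ and the hypothesis $J_A^2=0$. One point where you genuinely diverge, and for the better: the paper obtains the diagram in (b) by applying $M\otimes_A-$ to the commutative square of short exact sequences $0\to I_{\epsilon}\to A\to A_{\epsilon}\to 0$ and $0\to I_{\epsilon}\to \rad A\to \rad A_{\epsilon}\to 0$, which silently uses left exactness of $M\otimes_A-$; in fact the literal tensor product $M\otimes_A I_{\epsilon}\cong (\tops M)\otimes_{A/J_A}I_{\epsilon}$ need not inject into $M$ (already for $M$ simple). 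Your decision to read $M\otimes_A I_{\epsilon}$ as the image $MI_{\epsilon}\subseteq M$ is exactly what makes the exact rows correct, and your direct computations ($\rad F_{\epsilon}(M)=MJ_A/MI_{\epsilon}$, splitting from semisimplicity of $MJ_A$, $\tops M\cong\tops F_{\epsilon}(M)$) are all valid. Two cosmetic remarks: the decomposition $e^{\epsilon,-}Ae^{\epsilon,-}=e^{\epsilon,-}(A/J_A)e^{\epsilon,-}\oplus e^{\epsilon,-}J_Ae^{\epsilon,-}$ presumes a Wedderburn complement, but all you actually need is $e^{\epsilon,-}I_{\epsilon}=e^{\epsilon,-}J_A$, which holds outright; and for $\rad F_{\epsilon}(M)\in\add S_A^{\epsilon,-}$ the paper argues directly from part (a) via the surjection $(\rad P_{A_{\epsilon}}^{\epsilon,+})^r\to\rad F_{\epsilon}(M)$, which avoids invoking Lemma \ref{simple-injective}(2) for the possibly non-connected hereditary algebra $A_{\epsilon}$ (your route still works via Remark \ref{extend to non-connected}, but the direct argument is cleaner).
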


\begin{proof}
(a) It is clear by the definition of $A_{\epsilon}$ as an upper triangular matrix algebra.

(b) 
There is a commutative diagram of short exact sequence: 
$$ \hspace{15mm}
\xymatrix{
0 \ar[r] & I_{\epsilon}  \ar[r]& A  \ar[r] & A_{\epsilon} \ar[r]& 0 \\ 
0 \ar[r] & I_{\epsilon}  \ar[r] \ar@{=}[u]& \rad A  \ar[r] \ar@{^{(}->}[u]& \rad A_{\epsilon} \ar[r] \ar@{^{(}->}[u]& 0  & \text{(split)}
}
$$
where the bottom one splits since $\rad A$ is semisimple.  
Applying $M\otimes_{A}-$, we obtain the desired diagram. 

Next, let $M\in \Fac P_A^{\epsilon,+}$ and take a surjection $f\colon (P_{A}^{\epsilon,+})^{r} \rightarrow M$ for some integer $r>0$.    
Since $F_{\epsilon}$ is right exact, it induces a surjective homomorphism 
$(\rad P_{A_{\epsilon}}^{\epsilon,+})^{r} \rightarrow \rad F_{\epsilon}(M)$. 
By (a), $\rad P_{A_{\epsilon}}^{\epsilon,+} \in \add S_A^{\epsilon,-}$, 
so $\rad F_{\epsilon} (M) \in \add S^{\epsilon,-}_{A}$. 
On the other hand, we have $M\otimes_AI_{\epsilon} \in \add S_{A}^{\epsilon,+}$ since 
it is a semisimple module satisfying $(M\otimes_AI_{\epsilon})e^{\epsilon,-}=0$.
Finally, if $M \in \Fac P_{A}^{\epsilon,-}$, then $F_{\epsilon}(M) \in \Fac P_{A_{\epsilon}}^{\epsilon,-} = \add S_{A}^{\epsilon, -}$ by (a). 
\end{proof}

Now, we are ready to prove Theorem \ref{tors-fators}.

\begin{proof}[Proof of Theorem \ref{tors-fators}] \ \\ 
(1) and (2): Since $A_{\epsilon}$ is a factor algebra of $A$, we have a morphism of partially ordered sets by Proposition \ref{sign and factor}: 
\begin{equation} \label{tors-epsilon}
\overline{(-)} \colon \tors_{\epsilon} A \longrightarrow \tors_{\epsilon} A_{\epsilon}.
\end{equation}

Let $\mathcal{U} \in \tors_{\epsilon} A_{\epsilon}$ be a torsion class 
and $\mathsf{T}(\mathcal{U})$ the smallest torsion class in $\module A$ containing $\mathcal{U}$.
Clearly, $\mathsf{T}(\mathcal{U})$ lies in $\tors_{\epsilon} A$ and satisfies $\overline{\mathsf{T}(\mathcal{U})}=\mathcal{U}$. 
So the map (\ref{tors-epsilon}) is surjective. 
To prove that this is injective, it is enough to see that 
$\overline{\mathcal{T}}=\mathcal{U}$ implies $\mathcal{T}=\mathsf{T}(\mathcal{U})$
for any $\mathcal{T}\in \tors_{\epsilon} A$. 
Let $\mathcal{T} \in \tors_{\epsilon} A$ be a torsion class such that $\overline{\mathcal{T}}=\mathcal{U}$.
By definition, we have $\mathsf{T}(\mathcal{U})\subseteq \mathcal{T}$. 
Conversely, let $X \in \mathcal{T} \subseteq \Fac P_{A}^{\epsilon,+}$. 
By Lemma \ref{mod-str}(b), there is a short exact sequence
$$
0 \rightarrow X\otimes_A I_{\epsilon} \rightarrow X \rightarrow F_{\epsilon}(X) \rightarrow 0
$$
with $X\otimes_A I_{\epsilon} \in \add S_{A}^{\epsilon,+} \subseteq \mathcal{U}$   
and $F_{\epsilon}(X) \in \mathcal{T}\cap \module A_{\epsilon} = \overline{\mathcal{T}} = \mathcal{U}$. 
Since $\mathsf{T}(\mathcal{U})$ is closed under extensions over $\module A$, 
we obtain $X \in \mathsf{T}(\mathcal{U})$, and hence $\mathcal{T} = \mathsf{T}(\mathcal{U})$.
Therefore, the map (\ref{tors-epsilon}) is a bijection.

(2) and (3): Since $\epsilon$ provides the map $\epsilon_{A_{\epsilon}}$ for $A_{\epsilon}$ 
in Remark \ref{extend to non-connected}, 
there is an isomorphism of partially ordered sets $\tors_{\epsilon} A_{\epsilon} \rightarrow \fators A_{\epsilon}^!$ as in Remark \ref{extend to non-connected}. 

It completes the proof.
\end{proof}

Next, we focus on functorially finite torsion classes. 
Let $R_{\epsilon} \colon \tors_{\epsilon} A_{\epsilon} \rightarrow \fators A_{\epsilon}^!$ be an isomorphism given in Remark \ref{extend to non-connected}, 
as in the proof of Theorem \ref{tors-fators}. 
Similarly, let $\varphi_{\epsilon} \colon \stau_{\epsilon} A \rightarrow \fators A^!_{\epsilon}$ be an isomorphism. 
Our result is the following.

\begin{theorem} \label{stau-tilt-tilt}
Let $A$ be an algebra with radical square zero. 
For each map $\epsilon \colon [n] \rightarrow \{\pm1\}$, 
the following diagram is commutative, and all arrows are isomorphisms of partially ordered sets:
$$
\xymatrix@C40pt{\ar@{}[rd]|{\circlearrowright}
\stau_{\epsilon} A \ar[r]^{F_{\epsilon}} \ar[d]^{\Fac} & \stau_{\epsilon} A_{\epsilon} \ar[r]^{\varphi_{\epsilon}} \ar[d]^{\Fac} \ar@{}[rd]|{\circlearrowright}& \tilt A_{\epsilon}^!. \ar[d]^{\Fac} \\
\ftors_{\epsilon} A \ar[r]^{\overline{(-)}} & \ftors_{\epsilon} A_{\epsilon} \ar[r]^{R_{\epsilon}} & \ffators A_{\epsilon}^!. \\
}
$$
In particular, we have a bijection 
$$
\stau A \ \overset{1-1}{\longleftrightarrow} \ \bigsqcup_{\epsilon \colon [n] \rightarrow \{\pm1\}} \tilt A_{\epsilon}^!.
$$
\end{theorem}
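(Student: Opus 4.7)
The right-hand square commutes with $\varphi_{\epsilon}$ an isomorphism by Proposition \ref{stau-tilt}, applied block-wise as in Remark \ref{extend to non-connected} to the (possibly non-connected) hereditary algebra $A_{\epsilon}$, whose associated sign map coincides with $\epsilon$ itself by Remark \ref{components}. The vertical $\Fac$ arrows are bijections by Theorem \ref{bijection} combined with Proposition \ref{restriction-epsilon}, and the bottom arrow $R_{\epsilon}$ is the isomorphism from Theorem \ref{tors-fators}. The task thus reduces to showing that $F_{\epsilon}$ carries $\stau_{\epsilon} A$ bijectively onto $\stau_{\epsilon} A_{\epsilon}$ and that the left square commutes.

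My plan is to first lift $F_{\epsilon}$ to two-term silting complexes. Since $F_{\epsilon}(P_{A}(i)) = P_{A_{\epsilon}}(i)$ for every $i \in [n]$, the functor sends $\Ktwo(\proj A)$ into $\Ktwo(\proj A_{\epsilon})$, preserves $g$-vectors, and satisfies $|F_{\epsilon}(T)| = |T|$ and $|A| = |A_{\epsilon}|$. The key computation is that $e_j I_{\epsilon} e_i = 0$ whenever $\epsilon(i) = -1$ and $\epsilon(j) = +1$, which yields an isomorphism
\[
\Hom_{A}(T^{-1}, T'^{0}) \xrightarrow{\sim} \Hom_{A_{\epsilon}}(F_{\epsilon}(T^{-1}), F_{\epsilon}(T'^{0}))
\]
for all $T, T' \in \Ktwo(\proj A)$. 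The proof of Lemma \ref{Kb=Cb} uses only $\rad^2 A = 0$ and the sign condition, so it extends verbatim here. Combining this with Lemma \ref{exa} and the observation that the map $\eta$ kills the radical parts of its arguments (again by $\rad^2 A = 0$) and therefore factors through the semisimple quotients of the relevant endomorphism rings, which live in $A/J_A \cong A_{\epsilon}/\rad A_{\epsilon}$, one obtains a natural isomorphism
\[
\Hom_{\Kb(\proj A)}(T, T'[1]) \cong \Hom_{\Kb(\proj A_{\epsilon})}(F_{\epsilon}(T), F_{\epsilon}(T')[1]).
\]
Hence the presilting condition transfers in both directions, and Proposition \ref{properties of two-silt}(1) promotes this to silting. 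Injectivity of $F_{\epsilon}$ on two-term silting follows from $g^T = g^{F_{\epsilon}(T)}$ together with Proposition \ref{properties of g-vectors}(1); surjectivity follows by lifting differentials via the $\Hom$-isomorphism above, giving a bijection $F_{\epsilon} \colon \twosilt_{\epsilon} A \xrightarrow{\sim} \twosilt_{\epsilon} A_{\epsilon}$.

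To descend from $\twosilt$ to $\stau$, take $(M, Q) \in \stau_{\epsilon} A$ with minimal projective presentation $P^{-1} \xrightarrow{f} P^0 \to M \to 0$. By Proposition \ref{restriction-epsilon}, $P^0 \in \add P_{A}^{\epsilon, +}$ and $P^{-1} \in \add P_{A}^{\epsilon, -}$, so $\Image f \subseteq \rad P^0$ is semisimple; minimality of the cover $P^{-1} \twoheadrightarrow \Image f$ gives $\Image f \cong \tops P^{-1}$, which by Lemma \ref{mod-str}(a) matches $F_{\epsilon}(P^{-1})$ as an $A_{\epsilon}$-module. Hence $F_{\epsilon}(P^{-1}) \to F_{\epsilon}(P^0) \to F_{\epsilon}(M) \to 0$ is a minimal projective presentation in $\module A_{\epsilon}$, from which $F_{\epsilon}(T_{(M, Q)}) = T_{(F_{\epsilon}(M), F_{\epsilon}(Q))}$; Theorem \ref{bijection} then transports the two-term bijection to $F_{\epsilon} \colon \stau_{\epsilon} A \xrightarrow{\sim} \stau_{\epsilon} A_{\epsilon}$. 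The left square commutes because any $X \in \overline{\Fac M}$ satisfies $X I_{\epsilon} = 0$, so a surjection $M^r \twoheadrightarrow X$ in $\module A$ factors through $F_{\epsilon}(M)^r$, yielding $\overline{\Fac M} = \Fac F_{\epsilon}(M)$; the reverse inclusion is immediate as $F_{\epsilon}(M)$ is a quotient of $M$. The global bijection in the \emph{in particular} statement then follows from the sign decomposition $\stau A = \bigsqcup_{\epsilon} \stau_{\epsilon} A$ of Proposition \ref{properties of g-vectors}(2). The subtlest step is the $\Hom$-analysis above: $F_{\epsilon}$ is not a homotopy equivalence, but the radical-square-zero hypothesis forces the relevant class in $\Hom_{\Kb}(T, T'[1])$ to depend only on data that $F_{\epsilon}$ preserves exactly, which is what makes the silting structure transfer cleanly.
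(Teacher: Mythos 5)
Your argument is correct and reaches the same conclusion, but it runs through the homotopy category rather than through the $\tau$-rigidity criterion that the paper uses. The paper proves the key bijection $\stau_{\epsilon} A \to \stau_{\epsilon} A_{\epsilon}$ at the module level: Lemma \ref{proj-str} gives $\Hom_A(Q,M)\cong\Hom_{A_\epsilon}(F_\epsilon(Q),F_\epsilon(M))$ for $Q\in\add P_A^{\epsilon,-}$, and Lemma \ref{tau-rigid-pair} transfers $\tau$-rigidity by comparing the surjectivity condition of Proposition \ref{char-tau}(1) for $M$ and $F_\epsilon(M)$; the left square is then handled by citing \cite[Proposition 5.6(b)]{DIRRT}. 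You instead lift $F_\epsilon$ to $\Ktwo(\proj A)\to\mathsf{K}^2_\epsilon(\proj A_\epsilon)$ and show it induces an isomorphism on $\Hom(-,-[1])$ via Lemma \ref{exa}, exploiting that $\eta$ factors through the common semisimple quotient and that the target $\Hom_A(T^{-1},T'^0)$ is preserved exactly. Both proofs hinge on the same identity $e^{\epsilon,+}Ae^{\epsilon,-}=e^{\epsilon,+}A_\epsilon e^{\epsilon,-}$, so the mechanism is shared; what your version buys is a uniform treatment of rigidity and of the descent to $\stau$ (your explicit check that $F_\epsilon$ preserves minimal projective presentations is cleaner than the paper's implicit use of it), plus a self-contained argument for the left square. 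One step you assert but do not justify is $|F_\epsilon(T)|=|T|$: since $F_\epsilon$ is not fully faithful on $\Ktwo(\proj A)$ (the maps $\Hom_A(T^0,T'^0)\to\Hom_{A_\epsilon}(F_\epsilon T^0,F_\epsilon T'^0)$ have kernel $e^{\epsilon,+}J_Ae^{\epsilon,+}$), preservation of the number of indecomposable summands is not automatic. It does follow within your framework, either by noting that the induced map on endomorphism rings of two-term complexes is surjective with square-zero kernel (so local rings stay local and non-isomorphic summands stay non-isomorphic by the $g$-vector basis of Proposition \ref{properties of two-silt}(3)), or by the inequality argument combining Proposition \ref{properties of two-silt}(1) with the fact that the classes $g^{T_i}$ already form a basis of $\mathbb{Z}^n$; you should include one of these.
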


We need the following observations.

\begin{lemma} \label{proj-str}
For any $X \in \add P_{A}^{\epsilon,+}$ and $Y \in \add P_{A}^{\epsilon,-}$, we have 
\begin{equation} \label{ef-epsilon}
\Hom_{A}(Y,X) \cong \Hom_{A_{\epsilon}} (F_{\epsilon}(Y), F_{\epsilon}(X)).
\end{equation}
\end{lemma}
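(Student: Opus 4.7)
The plan is to use the tensor--hom adjunction between $F_{\epsilon}=-\otimes_{A}A_{\epsilon}$ and the restriction of scalars $\module A_{\epsilon}\hookrightarrow \module A$ coming from the surjection $A\twoheadrightarrow A_{\epsilon}$. This adjunction gives a natural isomorphism
$$
\Hom_{A_{\epsilon}}(F_{\epsilon}(Y),F_{\epsilon}(X))\ \cong\ \Hom_{A}(Y,F_{\epsilon}(X))
$$
for every $Y\in\module A$ and $X\in\module A$, where $F_{\epsilon}(X)$ on the right is viewed as an $A$-module via $A\twoheadrightarrow A_{\epsilon}$. So it is enough to show that the quotient map $q_{X}\colon X\to F_{\epsilon}(X)$ induces an isomorphism
$\Hom_{A}(Y,X)\to \Hom_{A}(Y,F_{\epsilon}(X))$ whenever $Y\in\add P_{A}^{\epsilon,-}$ and $X\in\add P_{A}^{\epsilon,+}$.

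To establish this, I would apply $\Hom_{A}(Y,-)$ to the short exact sequence
$$
0\longrightarrow X\otimes_{A}I_{\epsilon}\longrightarrow X\overset{q_X}{\longrightarrow} F_{\epsilon}(X)\longrightarrow 0
$$
supplied by Lemma \ref{mod-str}(b). Since $Y\in\add P_{A}^{\epsilon,-}$ is projective, $\Ext^{1}_{A}(Y,-)$ vanishes, so the induced sequence is short exact; thus it suffices to prove $\Hom_{A}(Y,X\otimes_{A}I_{\epsilon})=0$.

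The key input is again Lemma \ref{mod-str}(b): because $X\in \add P_{A}^{\epsilon,+}\subseteq \Fac P_{A}^{\epsilon,+}$, we have $X\otimes_{A}I_{\epsilon}\in\add S_{A}^{\epsilon,+}$. On the other hand, $Y\in\add P_{A}^{\epsilon,-}$ has $\tops Y\in\add S_{A}^{\epsilon,-}$, and the simples indexed by $\epsilon^{-1}(+)$ and $\epsilon^{-1}(-)$ are pairwise non-isomorphic. Hence any morphism from $Y$ to a module in $\add S_{A}^{\epsilon,+}$ factors through $\tops Y$ and must vanish, yielding $\Hom_{A}(Y,X\otimes_{A}I_{\epsilon})=0$ as required.

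This strategy is essentially formal once one has Lemma \ref{mod-str}(b); I do not expect a genuine obstacle. The one point that merits care is the identification of the natural isomorphism produced by the adjunction with the map $f\mapsto F_{\epsilon}(f)$, but this is automatic since the unit of the adjunction at $Y$ is the quotient map $Y\to F_{\epsilon}(Y)$.
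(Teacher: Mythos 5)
Your proof is correct, but it takes a genuinely different route from the paper's. The paper handles this lemma in one line: using the standard identification $\Hom_A(eA,e'A)\cong e'Ae$ for idempotents, both sides of (\ref{ef-epsilon}) are computed as corner spaces, and one observes that $e^{\epsilon,+}Ae^{\epsilon,-}=e^{\epsilon,+}A_{\epsilon}e^{\epsilon,-}$ because the ideal $I_{\epsilon}=e^{\epsilon,+}J_Ae^{\epsilon,+}+e^{\epsilon,-}J_Ae^{\epsilon,+}+e^{\epsilon,-}J_Ae^{\epsilon,-}$ has zero component in the $(+,-)$ corner. You instead combine the adjunction between $F_{\epsilon}=-\otimes_A A_{\epsilon}$ and restriction of scalars with the vanishing $\Hom_A(Y,X\otimes_AI_{\epsilon})=0$, which you correctly deduce from Lemma \ref{mod-str}(b) together with the observation that any map from $Y$ into a semisimple module factors through $\tops Y\in\add S_A^{\epsilon,-}$, which shares no simple summand with $\add S_A^{\epsilon,+}$. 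Both arguments are sound; the paper's is shorter, while yours has the advantage of exhibiting the isomorphism explicitly as $f\mapsto F_{\epsilon}(f)$ and of verifying its naturality, which the paper leaves tacit even though it is implicitly used later (e.g.\ in the commutative diagram in the proof of Lemma \ref{tau-rigid-pair}(2)). One small point worth being aware of: for a general module $M$ the map $M\otimes_AI_{\epsilon}\to M$ need not be injective, but since your $X$ is projective we have $\Tor_1^A(X,A_{\epsilon})=0$, so the short exact sequence you apply $\Hom_A(Y,-)$ to is genuinely exact and there is no gap.
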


\begin{proof}
It is immediate from the equality
\[
\Hom_A(P_{A}^{\epsilon,-}, P_{A}^{\epsilon,+})=e^{\epsilon,+}Ae^{\epsilon,-} =e^{\epsilon,+} A_{\epsilon}e^{\epsilon,-} =\Hom_{A_{\epsilon}}(P_{A_{\epsilon}}^{\epsilon,-},  P_{A_{\epsilon}}^{\epsilon,+}).  \qedhere
\] 
\end{proof}

\begin{lemma} \label{tau-rigid-pair}
Let $M$ be an $A$-module and 
$P^{-1} \rightarrow P^0 \overset{f}{\rightarrow} M \rightarrow 0$ a minimal projective presentation of $M$ such that $P^0 \in \add P_A^{\epsilon,+}$ and $P^{-1} \in \add P_{A}^{\epsilon,-}$. 
Then the following hold. 
\begin{enumerate}
\item For any $Q \in \add P_{A}^{\epsilon,-}$, we have $\Hom_A(Q, M)\cong \Hom_{A_{\epsilon}}(F_{\epsilon}(Q), F_{\epsilon}(M))$. 
\item  $M$ is $\tau$-rigid if and only if $F_{\epsilon}(M)$ is $\tau$-rigid.
\end{enumerate}
\end{lemma}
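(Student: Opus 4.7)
The plan is to prove (1) first and then deduce (2) from it. For (1), I would use that $F_\epsilon = -\otimes_A A_\epsilon$ is extension of scalars along the surjection $A \twoheadrightarrow A_\epsilon$ from Proposition \ref{A and A epsilon}(3), so its right adjoint is the natural inclusion $\module A_\epsilon \hookrightarrow \module A$ and the adjunction gives $\Hom_A(Q, N) \cong \Hom_{A_\epsilon}(F_\epsilon Q, N)$ for any $N \in \module A_\epsilon$. Taking $N = F_\epsilon M$ reduces the claim to showing that the canonical map $\Hom_A(Q, M) \to \Hom_A(Q, F_\epsilon M)$ is an isomorphism. Applying $\Hom_A(Q, -)$ to the short exact sequence $0 \to M \otimes_A I_\epsilon \to M \to F_\epsilon M \to 0$ from Lemma \ref{mod-str}(b) and using projectivity of $Q$, this reduces to $\Hom_A(Q, M \otimes_A I_\epsilon) = 0$. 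Since $P^0 \in \add P_A^{\epsilon,+}$ forces $M \in \Fac P_A^{\epsilon,+}$, Lemma \ref{mod-str}(b) gives $M \otimes_A I_\epsilon \in \add S_A^{\epsilon,+}$, and the vanishing follows from $\Hom_A(P_A^{\epsilon,-}, S_A^{\epsilon,+}) = 0$.

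For (2), the strategy is to apply Proposition \ref{char-tau}(1) to both $M$ and $F_\epsilon M$, which requires identifying a minimal projective presentation of $F_\epsilon M$ in $\module A_\epsilon$. I claim that $F_\epsilon P^{-1} \to F_\epsilon P^{0} \to F_\epsilon M \to 0$ is such a presentation. Minimality at $F_\epsilon P^0$ holds because $\tops F_\epsilon P^0 \cong \tops P^0 \cong \tops M \cong \tops F_\epsilon M$ (Lemma \ref{mod-str}(b)). For injectivity of $F_\epsilon f$, I would compute $\Tor_1^A(M, A_\epsilon)$ using the short exact sequence $0 \to I_\epsilon \to A \to A_\epsilon \to 0$: the resulting long exact sequence identifies this Tor-group with $\Ker(M \otimes_A I_\epsilon \to M)$, which is zero since Lemma \ref{mod-str}(b) shows $M \otimes_A I_\epsilon \hookrightarrow M$ is injective. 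Hence $F_\epsilon P^{-1}$ is isomorphic to $K' := \Ker(F_\epsilon P^0 \to F_\epsilon M) \subseteq \rad F_\epsilon P^0$, a semisimple module in $\add S_{A_\epsilon}^{\epsilon,-}$; since $A_\epsilon$ is hereditary with radical square zero and $\epsilon^{-1}(-)$ is the set of sinks of its bipartite valued quiver (Remark \ref{components}), these simples are already projective, so $F_\epsilon P^{-1} \cong K'$ is its own projective cover.

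With the minimal presentation of $F_\epsilon M$ identified, one forms the canonical commutative square
\[
\begin{array}{ccc}
\Hom_A(P^0, M) & \longrightarrow & \Hom_A(P^{-1}, M) \\
\alpha \downarrow & & \downarrow \beta \\
\Hom_{A_\epsilon}(F_\epsilon P^0, F_\epsilon M) & \longrightarrow & \Hom_{A_\epsilon}(F_\epsilon P^{-1}, F_\epsilon M),
\end{array}
\]
in which the horizontal maps are the ones from Proposition \ref{char-tau}(1), $\beta$ is an isomorphism by (1), and $\alpha$ is component-wise the quotient $Me_i \twoheadrightarrow F_\epsilon(M)e_i = Me_i / MI_\epsilon e_i$ for $i \in \epsilon^{-1}(+)$, hence surjective. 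A standard two-step diagram chase (lift a class along the bottom row, then lift back along $\alpha$) shows that the top row is surjective if and only if the bottom row is, and Proposition \ref{char-tau}(1) then yields the equivalence in (2). The main technical point will be verifying that $F_\epsilon$ carries the minimal projective presentation of $M$ to a minimal one of $F_\epsilon M$, but this is a straightforward consequence of Lemma \ref{mod-str}(b) together with the structural fact that radical-square-zero algebras have semisimple syzygies; the rest is a formal diagram chase using the tensor-restriction adjunction.
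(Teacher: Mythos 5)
Your proof is correct and follows essentially the same route as the paper: part (1) by comparing $\Hom_A(Q,M)$ with $\Hom_A(Q,F_\epsilon M)$ using the sequence from Lemma \ref{mod-str}(b) together with the tensor--restriction adjunction, and part (2) by the commutative square with the left vertical map surjective and the right one an isomorphism, combined with Proposition \ref{char-tau}(1). The only difference is that you explicitly verify that $F_\epsilon$ carries the minimal projective presentation of $M$ to a minimal one of $F_\epsilon(M)$ (via the Tor computation and the semisimplicity of the syzygy), a point the paper's proof uses implicitly without comment.
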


\begin{proof}
(1): It follows from the fact that any non-zero homomorphism $g \colon Q \rightarrow M$ factor through $\rad F_{\epsilon}(M)$ by Lemma \ref{mod-str}.
 
(2): By (1), we have the following commutative diagram:
$$
\xymatrix@C60pt{
\Hom_A(P_0, M) \ar[r]^{(f, M)} \ar@{->>}[d] \ar@{}[rd]|{\circlearrowright}& \Hom_A(P_1, M) \ar[d]_{(1)}^{\rotatebox{90}{$\sim$}} \\ 
\Hom_{A_{\epsilon}}(F_{\epsilon}(P_0), F_{\epsilon}(M)) \ar[r]^{(F_{\epsilon}(f), F_{\epsilon}(M))} & \Hom_{A_{\epsilon}}(F_{\epsilon}(P_1), F_{\epsilon}(M)). 
}
$$
By Proposition \ref{char-tau}, $M$ is $\tau$-rigid if and only if $(f,M)$ is surjective. 
From the above diagram, this is equivalent to the condition that 
$(F_{\epsilon}(f), F_{\epsilon}(M))$ is surjective, and also that $F_{\epsilon}(M)$ is $\tau$-rigid by Proposition \ref{char-tau} again.
\end{proof}

Now, we are ready to prove Theorem \ref{stau-tilt-tilt}.

\begin{proof}[Proof of Theorem \ref{stau-tilt-tilt}]
By Lemma \ref{proj-str}, $F_{\epsilon}=-\otimes_A A_{\epsilon}$ gives a bijection between 
isomorphism classes of indecomposable modules in $\module A$ and in $\module A_{\epsilon}$ 
whose $g$-vectors lie in $\{ x \in \mathbb{Z}^n \mid \text{$x_i \in \epsilon(i) \cdot \mathbb{Z}_{>0}$ for all $i\in [n]$}\}$. 
By Proposition \ref{char-tau}(2) and Lemma \ref{tau-rigid-pair}, 
$F_{\epsilon}$ induces an isomorphism $\stau_{\epsilon} A \overset{\sim}{\rightarrow} \stau_{\epsilon} A_{\epsilon}$.  
Commutativity of the left square follows from \cite[Proposition 5.6(b)]{DIRRT}.
On the other hand, by Proposition \ref{stau-tilt}, the map $\varphi_{\epsilon}$ commutes the right square. 
We finish the proof.  
\end{proof}

We can also extend our calculation on $g$-vectors. 
The following result means that the diagonal matrix $\mathrm{B}_{\epsilon}:={\rm diag}(\epsilon(1), \ldots, \epsilon(n))$ gives a transformation between $g$-vectors and dimension vectors.

\begin{theorem} \label{transformation}
For each map $\epsilon \colon [n] \rightarrow \{\pm1\}$, we have the following commutative diagram:
$$
\xymatrix@C40pt{
\stau_{\epsilon} A  \ar[r]^{F_{\epsilon}} \ar[d]^{g_{A}} \ar@{}[rd]|{\circlearrowright}&\stau_{\epsilon} A_{\epsilon} \ar[r]^{\varphi_{\epsilon}} \ar[d]^{g_{A_{\epsilon}}}  \ar@{}[rd]|{\circlearrowright}& \tilt A_{\epsilon}^{!} 
\ar[d]^{c_{A_{\epsilon}^{!}}} \\ 
\mathbb{Z}^n \ar[r]^{{\rm I}_n} &  \mathbb{Z}^n \ar[r]^{{\rm B}_\epsilon}& \mathbb{Z}^n
}
$$
\end{theorem}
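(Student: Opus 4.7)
The diagram splits into a left square (horizontal maps $F_\epsilon$ and ${\rm I}_n$) and a right square (horizontal maps $\varphi_\epsilon$ and ${\rm B}_\epsilon$). The right square reduces immediately to Proposition \ref{g-c-vector}, applied to the hereditary radical-square-zero algebra $A_\epsilon$ and extended to the possibly non-connected case as in Remark \ref{extend to non-connected}. By Remark \ref{components}, the map associated to $A_\epsilon$ in the sense of Section \ref{torsion class for hereditary} is $\epsilon$ itself, so the proposition gives $g_{A_\epsilon}^{(M',Q')} = {\rm B}_\epsilon \cdot c_{A_\epsilon^!}^{\varphi_\epsilon(M',Q')}$ for every $(M',Q') \in \stau_\epsilon A_\epsilon$; multiplying on the left by ${\rm B}_\epsilon$ and invoking ${\rm B}_\epsilon^2 = {\rm I}_n$ (Lemma \ref{matrices}(3)) yields the commutativity of the right square.

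The content therefore lies in the left square: the plan is to show $g_A^{(M,Q)} = g_{A_\epsilon}^{F_\epsilon(M,Q)}$ for every $(M,Q) \in \stau_\epsilon A$, viewed as vectors in $\mathbb{Z}^n$ via the fixed bijection $\mathcal{S}(A) \leftrightarrow \mathcal{S}(A_\epsilon)$. The basic observation is that $F_\epsilon = - \otimes_A A_\epsilon$ sends $P_A(i)$ to $P_{A_\epsilon}(i)$, and hence preserves multiplicities of indecomposable projective summands. Take a minimal projective presentation $P^{-1} \to P^0 \to M \to 0$; the sign condition on $g^{(M,Q)}$ together with Proposition \ref{properties of two-silt}(2) forces $P^0 \in \add P_A^{\epsilon,+}$ and $Q \oplus P^{-1} \in \add P_A^{\epsilon,-}$. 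I will verify that applying $F_\epsilon$ produces a \emph{minimal} projective presentation $F_\epsilon(P^{-1}) \to F_\epsilon(P^0) \to F_\epsilon(M) \to 0$ of $F_\epsilon(M)$ over $A_\epsilon$. Granting this, $g_{A_\epsilon}^{F_\epsilon(M)}$ matches $g_A^M$ entrywise, and since $F_\epsilon(Q)$ has the same indecomposable projective multiplicities as $Q$, we conclude $g_{A_\epsilon}^{F_\epsilon(M,Q)} = g_A^{(M,Q)}$.

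For the minimality check, the projective-cover condition in degree $0$ follows from $\tops F_\epsilon(P^0) \cong \tops P^0 \cong \tops M \cong \tops F_\epsilon(M)$, where the last isomorphism is part of Lemma \ref{mod-str}(b). For the kernel, set $K := \ker(P^0 \to M)$ and $K' := \ker(F_\epsilon(P^0) \to F_\epsilon(M))$; identifying $F_\epsilon(P^0) = P^0/P^0 I_\epsilon$ and $F_\epsilon(M) = M/MI_\epsilon$ gives $K' \cong K/(K \cap P^0 I_\epsilon)$. Minimality of the original presentation together with $P^{-1} \in \add P_A^{\epsilon,-}$ yields $K \cong \tops P^{-1} \in \add S_A^{\epsilon,-}$, while the explicit identity $e_i I_\epsilon = e_i J_A e^{\epsilon,+}$ for $i \in \epsilon^{-1}(+)$ shows $P^0 I_\epsilon \in \add S_A^{\epsilon,+}$. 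The sign-incompatibility inside the semisimple module $\rad P^0$ then forces $K \cap P^0 I_\epsilon = 0$, so $K' \cong K$; the induced surjection $F_\epsilon(P^{-1}) \twoheadrightarrow K'$ between semisimples of equal dimension is therefore an isomorphism, which completes minimality.

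I expect this sign-disjointness step to be the main obstacle: it is the only place where the precise form of the ideal $I_\epsilon$ from Proposition \ref{A and A epsilon}(3) interacts with the bipartite structure imposed by $\epsilon$ to separate the top of $P^{-1}$ from the ``noise'' $P^0 I_\epsilon$ introduced by the passage $A \to A_\epsilon$. Once this separation is established, the rest of the argument is routine dimension counting with indecomposable projective multiplicities.
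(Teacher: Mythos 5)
Your proposal is correct and follows the paper's route: the right square is exactly Proposition \ref{g-c-vector} applied to the hereditary algebra $A_{\epsilon}$ (for which $\epsilon_{A_{\epsilon}}=\epsilon$ by Remark \ref{components}) combined with ${\rm B}_{\epsilon}^2={\rm I}_n$, while the left square is the $g$-vector compatibility of $F_{\epsilon}$ that the paper treats as immediate from the construction of the bijection in Theorem \ref{stau-tilt-tilt} via Lemma \ref{proj-str}. Your explicit check that $F_{\epsilon}$ carries a minimal projective presentation of $M$ to a minimal one of $F_{\epsilon}(M)$ --- using $K\cong\tops P^{-1}\in\add S_A^{\epsilon,-}$, $P^0I_{\epsilon}\in\add S_A^{\epsilon,+}$, and hence $K\cap P^0I_{\epsilon}=0$ inside the semisimple module $\rad P^0$ --- is a correct and welcome filling-in of the detail the paper leaves implicit.
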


\begin{proof}
It is immediate from Proposition \ref{g-c-vector} and Theorem \ref{stau-tilt-tilt}.
\end{proof} 

\begin{example}  \rm \label{example epsilon}
Let $Q: \begin{xy}(5,3)*{1}="1", (10,-2)*{2}="2",  (0,-2)*{3}="3",
{"1" \ar "2"}, {"2" \ar "3"}, {"3" \ar "1"}, 
\end{xy}$ be a quiver and $A:=kQ/I$ where $I$ is a two-sided ideal generated by all path of length $2$. 
Take an element $\epsilon=(1,-1,1) \in \{\pm1\}^3$, we have 
$A_{\epsilon}=kQ_{(1,-1,1)}$ and $A^!_{\epsilon} = kQ_{(1,-1,1)}^{\rm op}$ as in Example \ref{example tri}.
By Theorem \ref{stau-tilt-tilt}, 
there is the following isomorphisms of the Hasse quivers, where we describe a given module by its composition series. 

\begin{table}[h] 
  \begin{tabular}{|c|c|c|c}
\hline $\Hasse(\stau_{\epsilon} A)$ & $\Hasse(\stau_{\epsilon} A_{\epsilon})$ & $\Hasse(\tilt A_{\epsilon}^!)$ \\ \hline \hline
\big(\scalebox{0.8}{$\begin{xy}(0,2)*{1},(0,-1)*{2} \end{xy} \oplus \begin{xy}(0,0)*{1} \end{xy} \oplus \begin{xy}(0,2)*{3},(0,-1)*{1} \end{xy}$} \big)  & 
\big(\scalebox{0.8}{$\begin{xy}(0,2)*{1},(0,-1)*{2} \end{xy} \oplus \begin{xy}(0,0)*{1} \end{xy} \oplus \begin{xy}(0,0)*{3} \end{xy}$} \big)  & 
\big(\scalebox{0.8}{$\begin{xy}(0,0)*{1} \end{xy} \oplus \begin{xy}(0,2)*{2}, (0,-1)*{1} \end{xy} \oplus \begin{xy}(0,0)*{3} \end{xy}$} \big)  \\ 
\rotatebox{90}{$\longleftarrow$} & \rotatebox{90}{$\longleftarrow$} &\rotatebox{90}{$\longleftarrow$} \\ 
\big(\scalebox{0.8}{$ \begin{xy}(0,0)*{1} \end{xy} \oplus \begin{xy}(0,2)*{3},(0,-1)*{1} \end{xy}$} \big)  & 
\big(\scalebox{0.8}{$ \begin{xy}(0,0)*{1} \end{xy} \oplus \begin{xy}(0,0)*{3} \end{xy}$} \big)  & 
\big(\scalebox{0.8}{$ \begin{xy}(0,0)*{2} \end{xy} \oplus \begin{xy}(0,2)*{2}, (0,-1)*{1} \end{xy} \oplus \begin{xy}(0,0)*{3} \end{xy}$} \big)  \\ \hline
\end{tabular}
 \end{table}

Considering all $\epsilon \in \{\pm1\}^3$, we get a disjoint union of $\Hasse (\stau_{\epsilon} A) \subset \Hasse(\stau A)$, whose arrows are described as a real line in the following figure:

$$
\begin{xy}
(0,0)*{\big(\scalebox{0.8}{$ \begin{xy}(0,2)*{1}, (0,-1)*{2}  \end{xy} \oplus \begin{xy}(0,2)*{2},(0,-1)*{3} \end{xy} \oplus  \begin{xy}(0,2)*{3}, (0,-1)*{1}  \end{xy} $} \big)}="11",
(-40, -10)*{\big(\scalebox{0.8}{$ \begin{xy} (0,0)*{3}  \end{xy} \oplus \begin{xy}(0,2)*{2},(0,-1)*{3} \end{xy} \oplus  \begin{xy}(0,2)*{3}, (0,-1)*{1}  \end{xy} $} \big)}="21",
(-50,-20)*{\big(\scalebox{0.8}{$ \begin{xy}(0,0)*{3}  \end{xy} \oplus \begin{xy}(0,2)*{2},(0,-1)*{3} \end{xy}  $} \big)}="22",
{"21" \ar "22"},
(0,-10)*{\big(\scalebox{0.8}{$ \begin{xy} (0,2)*{1}, (0,-1)*{2}  \end{xy} \oplus \begin{xy}(0,0)*{1} \end{xy} \oplus  \begin{xy}(0,2)*{3}, (0,-1)*{1}  \end{xy} $} \big)}="31",
(-10, -20)*{\big(\scalebox{0.8}{$ \begin{xy}(0,0)*{1}  \end{xy} \oplus \begin{xy}(0,2)*{3},(0,-1)*{1} \end{xy}  $} \big)}="32",
{"31" \ar "32"},
(40,-10)*{\big(\scalebox{0.8}{$ \begin{xy} (0,2)*{1}, (0,-1)*{2}  \end{xy} \oplus \begin{xy}(0,2)*{2},(0,-1)*{3} \end{xy} \oplus  \begin{xy} (0,0)*{2}  \end{xy} $} \big)}="41",
(30,-20)*{\big(\scalebox{0.8}{$ \begin{xy}(0,2)*{1}, (0,-1)*{2}  \end{xy} \oplus \begin{xy}(0,0)*{2} \end{xy}  $} \big)}="42",
{"41" \ar "42"},
(-30, -20)*{\big(\scalebox{0.8}{$ \begin{xy} (0,0)*{3}  \end{xy} \oplus \begin{xy}(0,2)*{3}, (0,-1)*{1}  \end{xy} $} \big)}="51",
(-40,-30)*{\big(\scalebox{0.8}{$ \begin{xy}(0,0)*{3}  \end{xy}  $} \big)}="52",
{"51" \ar "52"},
(10,-20)*{\big(\scalebox{0.8}{$ \begin{xy} (0,2)*{1}, (0,-1)*{2}  \end{xy} \oplus \begin{xy}(0,0)*{1} \end{xy} $} \big)}="61",
(0, -30)*{\big(\scalebox{0.8}{$ \begin{xy}(0,0)*{1}  \end{xy}$} \big)}="62",
{"61" \ar "62"},
(50,-20)*{\big(\scalebox{0.8}{$ \begin{xy}(0,2)*{2},(0,-1)*{3} \end{xy} \oplus  \begin{xy} (0,0)*{2}  \end{xy} $} \big)}="71",
(40,-30)*{\big(\scalebox{0.8}{$ \begin{xy}(0,0)*{2} \end{xy}  $} \big)}="72",
{"71" \ar "72"},
(0,-40)*{\big(\scalebox{0.8}{$ \begin{xy}(0,0)*{0} \end{xy}$} \big)}="81",
{"11" \ar@{-->} "21"}, {"11" \ar@{-->} "31"}, {"11" \ar@{-->} "41"},
{"21" \ar@{-->} "51"}, {"31" \ar@{-->} "61"}, {"41" \ar@{-->} "71"},
{"22" \ar@{-->} "52"}, {"32" \ar@{-->} "62"}, {"42" \ar@{-->} "72"},
{"22" \ar@{--} (-60, -20)}, {"32" \ar@{-->} "51"}, {"42" \ar@{-->} "61"}, {(60, -20) \ar@{-->} "71"},
{"52" \ar@{-->} "81"}, {"62" \ar@{-->} "81"}, {"72" \ar@{-->} "81"},
\end{xy}
$$

As wee see, dotted arrows in $\Hasse (\stau A)$ are missing,   
which connect components having different signatures.  
In the next section, we will reconstruct such arrows 
and give a complete description of $\Hasse (\stau A)$.
\end{example}

We end this subsection with a result on silting theory. 
It is important to know whether a given silting complex $T$ is {\it tilting}, 
that is, satisfying the additional condition $\Hom_A(T,T[i])=0$ for any integer $i\neq 0$.
For the algebra $A$ with radical square zero, we obtain the following characterization of 
two-term tilting complexes.

\begin{proposition} 
For a given map $\epsilon \colon [n] \rightarrow \{\pm1\}$, the following conditions are equivalent.  
\begin{itemize}
\item[(1)] $e^{\epsilon,-}Ae^{\epsilon,+}=0$,
\item[(2)] Any complex in $\twosilt_{\epsilon} A$ is tilting,  
\item[(3)] There exists a tilting complex in $\twosilt_{\epsilon} A$.
\end{itemize}
\end{proposition}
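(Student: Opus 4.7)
The plan is to prove (1) $\Leftrightarrow$ (2) via a single computation of $\Hom_{\Kb(\proj A)}(T, T[-1])$ for $T \in \twosilt_\epsilon A$, and then handle (3) by producing some silting complex in $\twosilt_\epsilon A$. The starting observation I would use is that for any $T \in \twosilt_\epsilon A$, combining Proposition \ref{properties of two-silt}(4) with the sign constraint on $g^T$ forces $\add T^0 = \add P_A^{\epsilon,+}$ and $\add T^{-1} = \add P_A^{\epsilon,-}$: each indecomposable projective must appear in exactly one of $T^0$ or $T^{-1}$, and the sign of the corresponding coordinate of $g^T$ decides which.

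The heart of the argument is the claim that for every $T \in \twosilt_\epsilon A$, the only potentially non-trivial negative-shift Hom is $\Hom_{\Kb(\proj A)}(T, T[-1]) \cong \Hom_A(T^0, T^{-1})$. For $i \leq -2$ the complex $T[i]$ lives in degrees disjoint from those of $T$, so $\Hom_{\Kb}(T, T[i]) = 0$ trivially. For $i = -1$, a chain map is determined by a single morphism $f \colon T^0 \to T^{-1}$ subject to $f \circ d_T = 0$ and $d_T \circ f = 0$; the short but crucial step is that $\add T^0 \cap \add T^{-1} = 0$ forces both $f$ and $d_T$ to lie in the radical of $\proj A$, so under $J_A^2 = 0$ these conditions are automatic. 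A quick check on degrees rules out any non-trivial homotopy, so the claimed isomorphism drops out. This is the main technical point; everything else is formal.

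Combining this with $\Hom_A(P(i), P(j)) \cong e_j A e_i$ and the identification $\add T^0 = \add P_A^{\epsilon,+}$, $\add T^{-1} = \add P_A^{\epsilon,-}$, the group $\Hom_A(T^0, T^{-1})$ assembles into a direct sum of blocks of $e^{\epsilon,-} A e^{\epsilon,+}$, and hence vanishes precisely when $e^{\epsilon,-} A e^{\epsilon,+} = 0$. Since a silting complex is tilting exactly when $\Hom_{\Kb}(T, T[i])$ vanishes for all $i \neq 0$ (the case $i>0$ being built into the definition), this shows \emph{individually} for each $T \in \twosilt_\epsilon A$ that $T$ is tilting if and only if (1) holds. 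From this, (1) $\Rightarrow$ (2) is immediate, and applying the equivalence to the tilting complex guaranteed by (3) yields (3) $\Rightarrow$ (1). For (2) $\Rightarrow$ (3) I only need $\twosilt_\epsilon A \neq \emptyset$, which is immediate from Theorem \ref{stau-tilt-tilt} because $A_\epsilon^!$ itself is a tilting $A_\epsilon^!$-module.
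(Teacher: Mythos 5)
Your proposal is correct and follows essentially the same route as the paper: both reduce the question to the single group $\Hom_{\Kb(\proj A)}(T,T[-1])\cong\Hom_A(T^0,T^{-1})$ via the radical-square-zero argument (both $d_T$ and any $f\colon T^0\to T^{-1}$ are radical maps because $\add T^0\cap\add T^{-1}=0$), identify this with copies of $e^{\epsilon,-}Ae^{\epsilon,+}$ using $\add T^0=\add P_A^{\epsilon,+}$ and $\add T^{-1}=\add P_A^{\epsilon,-}$, and settle $(2)\Rightarrow(3)$ by the non-emptiness of $\twosilt_{\epsilon}A\cong\tilt A_{\epsilon}^!$. Your version is slightly more explicit than the paper's in spelling out why the homotopies and the shifts $i\le -2$ contribute nothing, but the substance is identical.
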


\begin{proof}
Let $T=(T^{-1} \overset{d_T}{\rightarrow} T^{0})$ be a basic two-term silting complex in $\twosilt_{\epsilon} A$. By definition, $T^0 \in \add P_{A^!}^{\epsilon,+}$ and $T^{-1} \in \add P_{A}^{\epsilon,-}$. 
In this case, we have $\Hom_{\Kb(\proj A)}(T, T[-1]) \cong \Hom_{A}(T^{0}, T^{-1})$.  
In fact, we have $f \circ d_T = 0 = d_T\circ f$ for any $f \in \Hom_{A}(T^{0}, T^{-1})$ since 
both of $d_T$ and $f$ lie in the radical of $\proj A$. 
Therefore, $T$ is tilting if and only if $\Hom_{\Kb(\proj A)}(T^{0}, T^{-1})=0$ by definition.   
On the other hand, there is a natural isomorphism $e^{\epsilon,-}Ae^{\epsilon,+}\cong \Hom_A(P_A^{\epsilon,+}, P_{A}^{\epsilon,-})$.

$(1) \Rightarrow (2)$ 
Assume (1). Then $T$ is a tilting complex from the above discussion. 

$(2) \Rightarrow (3)$ 
It is clear since $\twosilt_{\epsilon} A \cong \tilt A_{\epsilon}^!$ is non-empty. 

$(3) \Rightarrow (1)$ 
We show the contraposition. 
Assume that there exists a non-zero homomorphism $f \in \Hom_{A}(P_{A}^{\epsilon,+}, P_{A}^{\epsilon,-})$. 
By Proposition \ref{properties of two-silt}(4), there is a direct summand of $T^{0}$ (resp. $T^{-1}$) which is isomorphic to $P_{A}^{\epsilon,+}$(resp. $P_{A}^{\epsilon,-}$).
So, $f$ gives a non-zero homomorphism in $\Hom_{A}(T^{0}, T^{-1})$. 
From the above discussion, it implies that $T$ is not tilting. 
\[
\begin{gathered}[b]
  \xymatrix{ 
    T^{-1} \ar[r]^{d_T} \ar[d] & T^{0} \ar_{f}[d]  \ar[r]    & \ar[d] 0  \\
0 \ar[r]  & T^{-1}  \ar[r]^{d_T} & T^0}\\
\end{gathered}
\qedhere
\] 
\end{proof}

\subsection{Gluing the Hasse quivers} 

The bijection in Theorem \ref{stau-tilt-tilt} provides a partial data of the Hasse quiver of $\stau A$, indeed, some arrows are missing (see Example \ref{example epsilon}). 
In contrast, we can reconstruct it in terms of tilting mutation (Theorem \ref{Hasse}).  

\begin{proposition} \cite[Theorem 2.18]{AIR}  \rm 
\label{mutation} 
Up to isomorphism, any basic almost complete $\tau$-tilting pair $(U,P)$ for $A$ is a direct summand of 
precisely two basic $\tau$-tilting pairs $(M, Q), (N, Q')$ for $A$, 
that is, $U$ (respectively, $P$) is a direct summand of $M, N$ (respectively, $Q,Q'$). 
In this case, either $N<M$ or $N>M$ holds in $\stau A$.
\end{proposition}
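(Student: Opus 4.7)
The plan is to pass through the bijection $\stau A \cong \twosilt A$ of Theorem~\ref{bijection} and prove the statement at the level of two-term silting complexes. Under this bijection, the basic almost complete $\tau$-tilting pair $(U,P)$ corresponds to a basic two-term presilting complex $T\in\Kb(\proj A)$ with $|T|=|A|-1=n-1$, and any completion of $(U,P)$ to a basic $\tau$-tilting pair corresponds, by Proposition~\ref{properties of two-silt}(1), to an indecomposable $X\in\Kb(\proj A)$ such that $T\oplus X$ is two-term silting. It therefore suffices to show that such an $X$ exists in exactly two isomorphism classes $X_+,X_-$, and that the resulting silting complexes are strictly comparable in the silting order.

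For the counting, I would first use Proposition~\ref{properties of two-silt}(3): the $g$-vectors of the indecomposable summands of any basic two-term silting complex form a $\mathbb{Z}$-basis of $K_0(\Kb(\proj A))\cong \mathbb{Z}^n$. Hence the $g$-vectors of the $n-1$ summands of $T$ span a primitive corank-$1$ sublattice $L\subset\mathbb{Z}^n$, and any completion $T\oplus X$ forces $g^X\notin L_{\mathbb{R}}$; this confines $g^X$ to one of the two open half-spaces cut out by the hyperplane $L_{\mathbb{R}}$, and an argument using injectivity of the $g$-vector on indecomposables of a fixed silting complex shows uniqueness of $X$ in each half-space.

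For existence, I would run a silting-mutation argument in the spirit of Aihara--Iyama. Starting from one completion $S=T\oplus X$ (which exists by Proposition~\ref{properties of two-silt}(1)), take a minimal right $\add T$-approximation $f\colon E\to X$ in $\Kb(\proj A)$ and embed it in a triangle $Y\to E \overset{f}{\to} X\to Y[1]$. Using the exact sequence of Lemma~\ref{exa} and the presilting vanishing $\Hom_{\Kb(\proj A)}(S,S[1])=0$, one verifies that $Y$ is again a two-term complex, that $T\oplus Y$ is a basic two-term presilting complex of size $n$, and hence silting by Proposition~\ref{properties of two-silt}(1). The triangle forces $g^X+g^Y\in L_{\mathbb{R}}$, so $g^Y$ lies in the opposite half-space from $g^X$; by the uniqueness step this produces the second completion, and shows that these are the only two.

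Finally, comparability of the two completions follows from the same exchange triangle: after applying $H^0$, the approximation $f$ yields a comparison map between the two support $\tau$-tilting modules $M=H^0(T\oplus X)$ and $N=H^0(T\oplus Y)$ which makes one of $\Fac M, \Fac N$ strictly contain the other, and then $N<M$ or $N>M$ in $\stau A$ via the order-preserving bijection $\stau A\cong \ftors A$ of Theorem~\ref{bijection}. The main obstacle I expect is the exchange step: one needs to show both that $Y$ is \emph{two-term} (not merely bounded) and that $T\oplus Y$ is presilting, which requires a careful diagram chase through Lemma~\ref{exa} using minimality of the approximation $f$ and the fact that the differential of $T$ lies in the radical of $\proj A$.
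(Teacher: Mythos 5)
First, a point of comparison: the paper offers no proof of this proposition at all --- it is imported verbatim from \cite[Theorem 2.18]{AIR}. Your sketch is therefore an attempt to reconstruct the AIR/Aihara--Iyama mutation theory itself, and judged on those terms it is a correct roadmap but has a genuine gap in each direction of the count.

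For ``at most two'': confining $g^X$ to one of the two open half-spaces cut out by $L_{\mathbb{R}}$ is fine, but the claim that injectivity of the $g$-vector map yields uniqueness of the complement in each half-space is not an argument. Proposition~\ref{properties of g-vectors}(1) only says that distinct silting complexes have distinct $g$-vectors; it does not exclude two non-isomorphic complements $X,X'$ of $T$ whose $g$-vectors lie on the same side of $L_{\mathbb{R}}$. What would exclude this is the fact that the $g$-vector cones of two-term silting complexes form a fan, but in the literature that fan property is \emph{deduced from} the two-complement theorem, so invoking it here is circular. The proof in \cite{AIR} avoids $g$-vectors entirely: it shows that any completion $(M,Q)$ of $(U,P)$ satisfies $\Fac M \in \{\Fac U,\ {}^{\perp}(\tau U)\cap P^{\perp}\}$, so the completions are pinned to the two endpoints of a torsion-class interval, and then applies the bijection of Theorem~\ref{bijection} with functorially finite torsion classes; this also gives the comparability statement for free. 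For ``at least two'': you correctly flag that the mapping cone $Y$ must be shown to be two-term and that $T\oplus Y$ must be shown presilting, but this verification is precisely the substance of the existence direction (for a two-term silting complex, exactly one of the two silting mutations at a given summand remains two-term), and the sketch defers rather than performs it. As written, the proposal identifies the right skeleton of the standard proof but leaves both load-bearing steps unproved.
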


In the situation of Proposition \ref{mutation}, we say that $N$ is a {\it left mutation} (respectively,{\it right mutation}) of $M$ when 
 $N < M$ (respectively, $N > M$). 
By \cite[Corollary 2.34]{AIR}, the Hasse quiver $\mathsf{Hasse}(\stau A)$ of $\stau A$ is given by
\begin{itemize}
\item the set of vertices is $\stau A$, 
\item there is an arrow $M \rightarrow N$ if and only if $N$ is a left mutation of $M$.  
\end{itemize}

Now, we define a partial order on $\{\pm1\}^n$ by $\eta \leq \epsilon$ for $\epsilon,\eta \in \{\pm1\}^n$ if $\eta(i) \leq \epsilon(i)$ for all $i \in [n]$. 
On this partial order, there is an arrow $a \colon \epsilon \rightarrow \eta$ in 
$\Hasse \{\pm1\}^n$ if and only if there exists an element $i_{a} \in [n]$ such that 
$\epsilon(e_{i_a})=1$, $\eta(e_{i_a})=-1$ and $\epsilon(e_j)=\eta(e_j)$ for all $j \neq i_a$.

For each arrow $a \colon \epsilon \rightarrow \eta$ in $\Hasse \{\pm1\}^n$, 
there is an natural isomorphism 
$$A_{\epsilon}^!/ \langle e'_{i_{a}} \rangle \cong A_{\eta}^!/ \langle e''_{i_{a}} \rangle$$
where $e'_{i_a}$ and $e''_{i_a}$ is an idempotent of $A^!_{\epsilon}$ and $A^!_{\eta}$ corresponding to $i_a$ respectively. 
We denote this factor algebra by $A_{a}^!$. 
We have two kinds of embedding of module categories:

$$
\module A_{a}^{!} \rightarrow \module A_{\epsilon}^{!}  \quad \text{and}  \quad 
\module A_{a}^{!} \rightarrow \module A_{\eta}^{!}. 
$$

\begin{def-prop} 
\label{tilt-comp}
Let $a \colon \epsilon_1 \rightarrow \epsilon_2$ be an arrow in $\Hasse \{\pm1\}^n$. 
Let $U \in \tilt A_a^!$. 
For each $i=1,2$, $U$ is a support $\tau$-tilting $A_{\epsilon_i}^!$-module. 
Moreover, there exists a unique element $T_i \in \tilt A_{\epsilon_i}^{!}$ such that $U$ is a left mutation of $T_i$. 
In this case, we draw an arrow $a_U \colon T_1 \rightarrow T_2$. 
\end{def-prop}

\begin{proof}
Firstly, a pair $(U, e'_{i_a}A_{\epsilon_1}^!)$ is a support $\tau$-tilting pair for $A_{\epsilon_1}^!$. 
In fact, we have $\Hom_{A_{\epsilon_1}^!}(e'_{i_a}A_{\epsilon_1}^!, U)= Ue'_{i_a}= 0$ and 
$|U|+ |e'_{i_a}A_{\epsilon_1}^!|= (n-1)+1=n$. 
And a pair $(U,0)$ is an almost complete $\tau$-tilting pair for $A^!_{\epsilon_1}$,
which is a direct summand of $(U, e'_{i_a} A_{\epsilon_1}^!)$.
By Proposition \ref{mutation}, another complement of $(U,0)$ must be of the form $(T_1, 0)$, where $T_1$ is a tilting $A^!_{\epsilon_1}$-module $T_1$ satisfying $U<T_1$.  
It means that $U$ is a left mutation of $T_1$. 
The same discussion holds for $\epsilon_2$.
\end{proof}

We get a complete description of the Hasse quiver of $\stau A$.

\begin{theorem} \label{Hasse}
There is an isomorphism of quivers  
$\mathsf{Hasse}(\stau A) \cong \mathrm{Q}(A)$,  
where $\mathrm{Q}(A)$ consists of a disjoint union of quivers $\Hasse(\tilt A_{\epsilon}^{!})$ 
for all $\epsilon \in \{\pm1\}^n$
with additional arrows $a_U$ in Definition \ref{tilt-comp}.
\end{theorem}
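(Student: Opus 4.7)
The plan is to extend the vertex bijection $\stau A \overset{1\text{-}1}{\longleftrightarrow} \bigsqcup_{\epsilon}\tilt A^!_{\epsilon}$ of Theorem~\ref{stau-tilt-tilt} to an isomorphism of Hasse quivers, treating arrows that stay within one signature component and arrows that cross between components separately.

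\emph{Intra-component arrows.} I will first show that each $\stau_{\epsilon} A$ is a convex subposet of $\stau A$: if $(M,Q)>(L,L')>(N,Q')$ in $\stau A$ with $(M,Q),(N,Q')\in \stau_{\epsilon} A$, then the sandwich $\Fac M \supseteq \Fac L \supseteq \Fac N$ forces $S(i)\in \Fac L \Leftrightarrow \epsilon(i)=1$ for every $i\in[n]$, so $(L,L')\in \stau_{\epsilon} A$. Convexity then implies that Hasse arrows of $\stau A$ whose endpoints both lie in $\stau_{\epsilon} A$ are exactly the Hasse arrows of the induced subposet $\stau_{\epsilon} A$, which Theorem~\ref{stau-tilt-tilt} identifies with the Hasse arrows of $\tilt A^!_{\epsilon}$.

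\emph{Inter-component arrows.} For a Hasse arrow $(M,Q)\to (N,Q')$ with $(M,Q)\in \stau_{\epsilon_1} A$ and $(N,Q')\in \stau_{\epsilon_2} A$, $\epsilon_1\neq \epsilon_2$, Proposition~\ref{mutation} supplies a unique common basic almost complete $\tau$-tilting pair $(U_0,P_0)$; equivalently, the two-term silting complexes $T_{(M,Q)}$ and $T_{(N,Q')}$ share $n-1$ indecomposable summands and differ in exactly one. I will case-analyse the differing pair of summands and show that, among the possibilities, only the case ``indecomposable shifted projective $P(i_a)[1]$ versus a genuine two-term complex whose top is supported at vertex $i_a$'' produces $\epsilon_1\neq \epsilon_2$, and in that case exactly the $i_a$-th coordinate of the signature flips. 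Hence $\epsilon_1(i_a)=1,\,\epsilon_2(i_a)=-1$ and $\epsilon_1,\epsilon_2$ agree elsewhere, yielding an arrow $a\colon \epsilon_1 \to \epsilon_2$ in $\Hasse(\{\pm1\}^n)$. Transporting through Theorem~\ref{stau-tilt-tilt}, the common presilting $(U_0,P_0)$ corresponds to a common summand $U$ of the tilting modules $T_1\in \tilt A^!_{\epsilon_1}$ and $T_2\in \tilt A^!_{\epsilon_2}$ attached to $(M,Q)$ and $(N,Q')$; this $U$ is a tilting module over the common factor $A^!_a = A^!_{\epsilon_1}/\langle e'_{i_a}\rangle \cong A^!_{\epsilon_2}/\langle e''_{i_a}\rangle$ and matches the label of the arrow $a_U$ of Definition--Proposition~\ref{tilt-comp}. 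Conversely, every $U\in \tilt A^!_a$ gives tilting completions $T_1,T_2$ whose preimages in $\stau A$ share an almost complete $\tau$-tilting pair and so are connected by a Hasse arrow matching $a_U$.

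\emph{Main obstacle.} The hardest step is the signature analysis for inter-component Hasse arrows: verifying that a mutation which changes the signature does so at exactly one coordinate, and pinning down that coordinate as $i_a$. The argument rests on the observation that shifted indecomposable projectives $P(i)[1]$ have $g$-vector $-\mathbf{e}_i$ concentrated at a single index, so exchanging such a summand with a genuine two-term complex is the only mutation type compatible with a single-coordinate sign flip in the overall $g$-vector; the ``module against module'' swaps will have to be excluded using the exchange triangle for two-term silting, which constrains the relationship between the swapped summands' $g$-vectors and those of the unchanged summands, and the compatibility $g^{(M,Q)} = \mathrm{B}_{\epsilon_1}\cdot c_{A^!_{\epsilon_1}}^{T_1}$ from Theorem~\ref{transformation} that forces the absolute values of the signature coordinates to agree with strictly positive dimension-vector entries of the associated tilting modules.
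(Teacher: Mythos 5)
Your overall architecture matches the paper's: split the arrows of $\Hasse(\stau A)$ into those staying inside one signature component and those crossing components, identify the former with the arrows of $\Hasse(\tilt A_{\epsilon}^{!})$ via Theorem \ref{stau-tilt-tilt}, and match the latter bijectively with the labelled arrows $a_U$ of Definition and Proposition \ref{tilt-comp}. Your convexity argument for the intra-component arrows is correct and in fact makes explicit a point the paper leaves implicit.

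However, your treatment of the inter-component arrows rests on a false claim. You assert that a mutation which changes the signature must exchange an indecomposable shifted projective $P(i_a)[1]$ against a genuine two-term complex, and you plan to exclude the ``module against module'' swaps. The paper's own example refutes this: for $A=kQ/\rad^2$ with $Q$ the oriented $3$-cycle, the cross-component Hasse arrow from $M_1=S(1)\oplus P(3)$ (signature $(1,-1,1)$) to $M_2=S(3)\oplus P(3)$ (signature $(-1,-1,1)$) exchanges the summand $(P(2)\rightarrow P(1))$ of $T_{(M_1,Q_1)}$ against the summand $(P(1)\rightarrow P(3))$ of $T_{(M_2,Q_2)}$, the common part being $P(3)\oplus P(2)[1]$; neither exchanged summand is a shifted projective, yet the first coordinate of the signature flips. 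So your case analysis would miss exactly the arrows it is supposed to produce. The correct observation is different: if coordinate $i$ flips, then by Proposition \ref{properties of two-silt}(2) and (4) one gets $P(i)\in\add X^{0}\cap\add Y^{-1}$ for the two exchanged summands $X,Y$, and consequently $P(i)$ does not occur in the common summand $V$ at all. Uniqueness of the flipped coordinate then follows from Proposition \ref{properties of two-silt}(3): if two coordinates $i\neq j$ both flipped, the $g$-vectors of the $n-1$ indecomposable summands of $V$ would all lie in the subspace $\{x_i=x_j=0\}$ and could not, together with a single further vector, form a basis of $\mathbb{Z}^n$. That the flip goes from $+$ to $-$ in the direction of the arrow is forced by $\Fac M_2\subseteq\Fac M_1$. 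With the single flipped coordinate $i_a$ identified this way, the remainder of your matching with the arrows $a_U$ can be carried out, but as written the key step of your signature analysis fails.
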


\begin{proof}
By Theorem \ref{stau-tilt-tilt}, there are isomorphisms of 
full subquivers $\Hasse (\stau_{\epsilon} A)$ and $\Hasse (\tilt A_{\epsilon}^{!})$.
Thus, it is enough to show a bijection between arrows not lie in those subquivers. 

Let $a \colon \epsilon_1 \rightarrow \epsilon_2$ be an arrow in $\Hasse \{\pm1\}^n$ and $U \in \tilt A_a^!$.  
Let $a_U \colon T_1 \rightarrow T_2$ be an arrow in Definition \ref{tilt-comp}. 
For $i=1,2$, we denote by $M_i \in \stau_{\epsilon_i} A$ the support $\tau$-tilting module corresponding to $T_i$ under the bijection in Theorem \ref{stau-tilt-tilt}. 
Then $M_2$ is a left mutation of $M_1$. 
In fact, they have an almost complete support $\tau$-tilting module $M_U$ as a common direct summand and satisfy $\Fac M_2 \subseteq \Fac M_1$ by a choice of signatures such that 
$\epsilon_1(i_a)=1, \epsilon_2(i_a)=-1$ and $\epsilon_1(j)=\epsilon_2(j)$ for all $j\neq i_a$.  
Thus, there is an arrow $\overline{a_U} \colon M_1 \rightarrow M_2$ in $\Hasse (\stau A)$.

On the other hand, let $f \colon M_1 \rightarrow M_2$ be an arrow which not lie in 
$\Hasse(\stau_{\epsilon} A)$ for any $\epsilon \in \{\pm1\}^n$. 
It is easy to see that there is a unique arrow $a \colon \epsilon_1 \rightarrow \epsilon_2$ in 
$\Hasse \{\pm1\}^n$ such that $M_i \in \stau_{\epsilon_i} A$ for $i=1,2$. 
Following the previous paragraph in reverse, we find that the correspondence 
$a_U \mapsto \overline{a_{U}}$ is bijective. 
Finally, we get an isomorphism $\Hasse (\stau A) \cong {\rm Q}(A)$. 
\end{proof}

\begin{example} \rm
Let $Q$ be the quiver in Example \ref{example epsilon}.
Let $a \colon \epsilon \rightarrow \eta$ be an arrow in $\Hasse \{\pm1\}^n$ such that 
$\epsilon=(1,-1,1), \eta=(-1,-1,1) \in \{\pm1\}^3$.
Since $A_{a}^!\cong k(2 \quad 3)$ is semisimple, 
there is a unique tilting $A_a^!$-module $U:=A_a^!$ itself. 
We have an arrow $a_{U}$ in Definition \ref{tilt-comp}, and 
it gives rise to an arrow $\overline{a_{U}}$ of $\Hasse (\stau A)$. 

$$
\begin{xy}
{(-15, 15) \ar@{-} (125, 15)},
{(-15, -28) \ar@{-} (125, -28)},
{(-15, 15) \ar@{-} (-15, -28)},
{(55, 15) \ar@{-} (55, -28)},
{(54, 15) \ar@{-} (54, -28)},
{(125, 15) \ar@{-} (125, -28)},
(0,10)*{\Hasse (\tilt A_{\epsilon}^!)}, (40,10)*{\Hasse (\tilt A_{\eta}^!)},
(0,0)*{\big(\scalebox{0.8}{$ \begin{xy} (0,0)*{1} \end{xy} \oplus \begin{xy}(0,2)*{2}, (0,-1)*{1} \end{xy} \oplus  \begin{xy}(0,0)*{3}\end{xy} $} \big)}="31",
(0,-12)*{\big(\scalebox{0.8}{$ \begin{xy} (0,0)*{2} \end{xy} \oplus \begin{xy}(0,2)*{2}, (0,-1)*{1} \end{xy} \oplus  \begin{xy}(0,0)*{3}\end{xy} $} \big)}="32",
{"31" \ar "32"},
(20,-22)*+{U=A_a^!}="1",
(40,-10)*{\big(\scalebox{0.8}{$ \begin{xy} (0,2)*{1}, (0,-1)*{3}  \end{xy} \oplus \begin{xy}(0,0)*{2} \end{xy} \oplus  \begin{xy}(0,0)*{3} \end{xy} $} \big)}="41",
(40,-22)*{\big(\scalebox{0.8}{$ \begin{xy} (0,2)*{1}, (0,-1)*{3}  \end{xy} \oplus \begin{xy}(0,0)*{2} \end{xy} \oplus  \begin{xy}(0,0)*{1} \end{xy} $} \big)}="42",
{"41" \ar "42"},
{"32" \ar@{.>}_{\rm left} "1"}, 
{"41" \ar@{.>}^{\rm left} "1"},
{"32" \ar@{-->}^{a_{U}} "41"},
(70,10)*{\Hasse (\stau_{\epsilon} A)}, (110,10)*{\Hasse (\stau_{\eta} A)},
(70,0)*{\big(\scalebox{0.8}{$ \begin{xy} (0,2)*{1}, (0,-1)*{2}  \end{xy} \oplus \begin{xy}(0,0)*{1} \end{xy} \oplus  \begin{xy}(0,2)*{3}, (0,-1)*{1}  \end{xy} $} \big)}="61",
(70, -12)*{\big(\scalebox{0.8}{$ \begin{xy}(0,0)*{1}  \end{xy} \oplus \begin{xy}(0,2)*{3},(0,-1)*{1} \end{xy}  $} \big)}="62",
{"61" \ar "62"},
(110, -10)*{\big(\scalebox{0.8}{$ \begin{xy} (0,0)*{3}  \end{xy} \oplus \begin{xy}(0,2)*{3}, (0,-1)*{1}  \end{xy} $} \big)}="51",
(110,-22)*{\big(\scalebox{0.8}{$ \begin{xy}(0,0)*{3}  \end{xy}  $} \big)}="52",
{"51" \ar "52"},
{"62" \ar@{-->}^{\overline{a_{U}}} "51"},
\end{xy}
$$

Considering all such pair $(a,U)$, 
we obtain all dotted arrows in Example \ref{example epsilon} and 
they give rise to the Hasse quiver $\Hasse (\stau A)$ by Theorem \ref{Hasse}. 
\end{example}

\section{Application to $\tau$-tilting-finite algebras} \label{tau-tilting-finiteness}

In this section, we study $\tau$-tilting-finite algebras. 
Here, we say that an algebra $A$ is {\it $\tau$-tilting-finite} if $\#\stau A$ is finite.

\subsection{Characterizing $\tau$-tilting-finite algebras with radical square zero}
Let $A$ be an algebra with radical square zero.
Let $\mathcal{S}(A):=\{e_1,\ldots, e_n\}$.
\begin{definition}\rm 
Let $\Gamma_A$ be a valued quiver of $A$.  
We define a new valued quiver $\Gamma^s_A$, called {\it separated quiver} as follows: 
\begin{itemize}
\item the set of vartices is $\{i^{\pm} \mid i \in[n]\}$ and 
\item For each valued arrow $i \overset{(d_{ij}',d_{ij}'')}{\longrightarrow} j$ in $\Gamma_A$, we 
draw a valued arrow $i^{+} \overset{(d_{ij}',d_{ij}'')}{\longrightarrow} j^{-}$ in $\Gamma^s_A$.
\end{itemize}

A full subquiver of $\Gamma^s_A$ is called {\it single subquiver} if it contains either $i^{+}$ or $i^{-}$ for each vertex $i$ of $\Gamma_A$.
\end{definition}

\begin{remark}\rm \label{Q-epsilon}
Let $\epsilon \colon [n] \rightarrow \{\pm1\}$. 
In Remark \ref{components}, the valued quiver $\Gamma_{A_{\epsilon}}$ of $A_{\epsilon}$ is described.  
We can regard $\Gamma_{A_{\epsilon}}$ as a maximal single subquiver of $\Gamma^s_A$ by 
identifying vertex $i \in \epsilon^{-1}(\sigma)$ in $\Gamma_{A_{\epsilon}}$ with 
$i^{\sigma}$ in $\Gamma^s_A$.
\end{remark}

As a corollary of Theorem \ref{stau-tilt-tilt}, we obtain a slight generalization of the result of  
Adachi \cite[Theorem 3.1]{Ada1} 
to a non-algebraically closed field.

\begin{corollary} 
\label{tau-tilt-finite}
Let $A$ be an algebra with radical square zero and let $\Gamma_A$ be a valued quiver of $A$.
These following are equivalent.
\begin{enumerate}
\item $A$ is $\tau$-tilting-finite.
\item The underlying valued graph of any single subquiver of $\Gamma^s_A$ is a disjoint union of Dynkin diagrams $\mathbb{A}_n$, $\mathbb{B}_n$, $\mathbb{C}_n$, $\mathbb{D}_n$, 
$\mathbb{E}_6$, $\mathbb{E}_7$, $\mathbb{E}_8$,
$\mathbb{F}_4$ and $\mathbb{G}_2$. 
\item For every map $\epsilon \colon [n] \rightarrow \{\pm1\}$, 
the underlying valued graph of $\Gamma_{A_{\epsilon}}$ is a disjoint union of Dynkin diagrams 
$\mathbb{A}_n$, $\mathbb{B}_n$, $\mathbb{C}_n$, $\mathbb{D}_n$, 
$\mathbb{E}_6$, $\mathbb{E}_7$, $\mathbb{E}_8$,
$\mathbb{F}_4$ and $\mathbb{G}_2$. 
\end{enumerate}
\end{corollary}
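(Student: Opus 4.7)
The plan is to use the bijection $\stau A \leftrightarrow \bigsqcup_{\epsilon} \tilt A_{\epsilon}^{!}$ from Theorem \ref{stau-tilt-tilt} to reduce $\tau$-tilting-finiteness to the finiteness of tilting modules over each hereditary algebra $A_{\epsilon}^{!}$, and then invoke Proposition \ref{tilting number}. Since conditions (2) and (3) are essentially combinatorial assertions on $\Gamma^{s}_{A}$, the essential content is the equivalence (1) $\Leftrightarrow$ (3).

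First I would handle (1) $\Leftrightarrow$ (3). By Theorem \ref{stau-tilt-tilt}, $A$ is $\tau$-tilting-finite if and only if $\tilt A_{\epsilon}^{!}$ is finite for every $\epsilon \colon [n]\to \{\pm 1\}$. By Proposition \ref{A and A epsilon}, each $A_{\epsilon}^{!}$ is a hereditary algebra with radical square zero, so applying Proposition \ref{tilting number} to each of its connected blocks (using that $\tilt$ distributes over products and that a valued graph is a disjoint union of Dynkin diagrams precisely when each of its connected components is Dynkin) shows that $\tilt A_{\epsilon}^{!}$ is finite if and only if the underlying valued graph of $\Gamma_{A_{\epsilon}^{!}}$ is a disjoint union of Dynkin diagrams. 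By construction (see \eqref{A! as upper matrix algebra} and Remark \ref{components}), $\Gamma_{A_{\epsilon}^{!}}$ is obtained from $\Gamma_{A_{\epsilon}}$ by reversing all arrows, hence the two valued quivers share the same underlying valued graph, and (3) follows.

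For the equivalence (2) $\Leftrightarrow$ (3), Remark \ref{Q-epsilon} identifies the quivers $\Gamma_{A_{\epsilon}}$, for $\epsilon \in \{\pm 1\}^{n}$, with exactly the maximal single subquivers of $\Gamma^{s}_{A}$. Thus (2) $\Rightarrow$ (3) is immediate, and conversely any single subquiver of $\Gamma^{s}_{A}$ is a full subquiver of some $\Gamma_{A_{\epsilon}}$; since every full subgraph of a disjoint union of Dynkin diagrams is again a disjoint union of Dynkin diagrams, (3) implies (2).

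There is no serious obstacle: the heavy lifting has been done in Theorem \ref{stau-tilt-tilt} (reduction to hereditary algebras via sign-decomposition) and Proposition \ref{tilting number} (Dynkin classification). The only mild point to verify is that the invariance under $A_{\epsilon} \mapsto A_{\epsilon}^{!}$ really only reverses arrows, so that the underlying valued graphs coincide, and that the Dynkin property is preserved under passing to full subgraphs, which is clear from inspection of the list $\mathbb{A}_{n},\mathbb{B}_{n},\mathbb{C}_{n},\mathbb{D}_{n},\mathbb{E}_{6},\mathbb{E}_{7},\mathbb{E}_{8},\mathbb{F}_{4},\mathbb{G}_{2}$.
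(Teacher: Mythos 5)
Your proposal is correct and follows essentially the same route as the paper: reduce via Theorem \ref{stau-tilt-tilt} to finiteness of $\tilt A_{\epsilon}^{!}$ for all $\epsilon$, apply Proposition \ref{tilting number}, and use Remark \ref{Q-epsilon} together with the fact that full subgraphs of Dynkin diagrams are Dynkin for (2) $\Leftrightarrow$ (3). The extra details you supply (arrow reversal between $\Gamma_{A_{\epsilon}}$ and $\Gamma_{A_{\epsilon}^{!}}$, distributing $\tilt$ over blocks) are points the paper leaves implicit, and they check out.
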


\begin{proof}
(1) $\Leftrightarrow$ (3): 
By Theorem \ref{stau-tilt-tilt}, 
$A$ is $\tau$-tilting-finite if and only if $\tilt A_{\epsilon}^!$ is finite for every map 
$\epsilon \colon [n] \rightarrow \{\pm1\}$. 
By Proposition \ref{tilting number}, it is equivalent to the condition that 
the underlying valued graph of $\Gamma_{A_{\epsilon}}$ is a disjoint union of 
Dynkin diagrams for every $\epsilon$.

(2) $\Leftrightarrow$ (3)
By Remark \ref{Q-epsilon}, it is clear since any subquiver of a Dynkin diagram is Dynkin.
\end{proof}

\subsection{Brauer line algebras and Brauer cycle algebras}
Combining Theorem \ref{tors-fators} and \ref{stau-tilt-tilt} with the next result, 
we can also compute torsion classes and support $\tau$-tilting modules for certain classes 
of algebras. In particular, it contains symmetric algebras with radical cube zero, which are extensively studied by \cite{CL, HK, Sei}. 
The following result is firstly observed by \cite{EJR} for $\tau$-tilting modules. 

\begin{proposition} \cite[Theorem 4.20]{DIRRT} \label{reduction}
Let $B$ be a finite dimensional $k$-algebra. 
Let $J$ be an ideal generated by central elements and contained in $\rad B$. 
Then there are isomorphisms of partially ordered sets
$$
\tors B \overset{\sim}{\rightarrow} \tors (B/J), \quad \stau B \overset{\sim}{\rightarrow} \stau (B/J).
$$
\end{proposition}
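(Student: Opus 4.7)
The plan is to construct the first bijection $\tors B \to \tors(B/J)$ via $\Phi(\mathcal{T}) := \mathcal{T} \cap \module(B/J)$, with proposed inverse $\Psi(\mathcal{U}) := \{M \in \module B \mid M/MJ \in \mathcal{U}\}$, where $\module(B/J)$ is identified with the full subcategory of $\module B$ consisting of modules annihilated by $J$. The whole argument reduces to the case $J = (c)$ for a single central element $c \in \rad B$ by iterating over a finite list of central generators of $J$, each of which remains central and radical in the successive quotient.

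The technical heart is the following observation: for $c$ central, multiplication $\mu_c \colon M \to M$ is a $B$-homomorphism with image $Mc$, so $Mc$ is simultaneously a submodule and a quotient of $M$. For any torsion class $\mathcal{T}$, this at once gives $M \in \mathcal{T} \Rightarrow M/Mc \in \mathcal{T}$. Conversely, since $\mu_c$ induces a surjection $M/Mc \twoheadrightarrow Mc^{k}/Mc^{k+1}$ for every $k \geq 0$, and the descending filtration $M \supseteq Mc \supseteq Mc^{2} \supseteq \cdots$ terminates at $0$ (as $c \in \rad B$ and $\dim_{k} M < \infty$), extension closure forces $M \in \mathcal{T}$ whenever $M/Mc \in \mathcal{T}$. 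Iterating over the generators of $J$ yields the key equivalence
\[
M \in \mathcal{T} \ \Longleftrightarrow \ M/MJ \in \mathcal{T}
\]
for every $\mathcal{T} \in \tors B$.

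With this lemma, one checks that $\Psi(\mathcal{U})$ is a torsion class: closure under quotients is immediate because $N$ a quotient of $M$ gives $N/NJ$ a quotient of $M/MJ$; closure under extensions follows from the exact sequence
\[
0 \to X/(X \cap YJ) \to Y/YJ \to Z/ZJ \to 0
\]
obtained from a short exact sequence $0 \to X \to Y \to Z \to 0$, together with the fact that $X/(X \cap YJ)$ is a quotient of $X/XJ$ (since $XJ \subseteq X \cap YJ$). The identities $\Phi\Psi = \mathrm{id}$ and $\Psi\Phi = \mathrm{id}$ then follow formally from the key equivalence, and both maps are inclusion-preserving.

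For the $\tau$-tilting bijection, by Theorem \ref{bijection} it is enough to verify that $\Phi$ restricts to an isomorphism $\ftors B \cong \ftors (B/J)$, which reduces to the equality $\Phi(\Fac M) = \Fac(M/MJ)$ in $\module(B/J)$: any surjection $M^{r} \twoheadrightarrow N$ onto a $B/J$-module factors through $(M/MJ)^{r}$ because $NJ = 0$. The most delicate step in the plan is the extension closure of $\Psi$: because tensoring with $B/J$ is only right exact, one cannot expect $X/XJ$ itself to embed in $Y/YJ$, and must instead argue via the intermediate quotient $X/(X \cap YJ)$.
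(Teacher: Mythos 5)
Your proof is correct, but it is worth noting that the paper itself gives no proof of this proposition: it is quoted from \cite[Theorem 4.20]{DIRRT} (the $\tau$-rigid version originating in \cite{EJR}). What you have produced is a self-contained elementary argument, and its engine --- the observation that for a central $c\in\rad B$ the map $\mu_c$ makes each $Mc^{k}/Mc^{k+1}$ a quotient of $M/Mc$, so that the finite filtration $M\supseteq Mc\supseteq Mc^2\supseteq\cdots$ forces $M\in\mathcal{T}\Leftrightarrow M/Mc\in\mathcal{T}$ --- is essentially the same device that drives the cited proofs; your handling of extension closure for $\Psi$ via $X/(X\cap YJ)$ is also the right (and genuinely necessary) fix for the failure of left exactness of $-\otimes_B B/J$. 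Two small points deserve to be made explicit. First, one should record that $\Phi(\mathcal{T})=\mathcal{T}\cap\module(B/J)$ is itself a torsion class of $\module(B/J)$ (immediate, since a short exact sequence in $\module(B/J)$ is one in $\module B$). Second, the equality $\Phi(\Fac_B M)=\Fac_{B/J}(M/MJ)$ as stated only shows $\Phi(\ftors B)\subseteq\ftors(B/J)$; for surjectivity onto $\ftors(B/J)$ you should add the one-line observation that for $\mathcal{U}=\Fac_{B/J}N$ one may regard $N$ as a $B$-module, whence $\Phi(\Fac_B N)=\Fac_{B/J}N=\mathcal{U}$ and therefore $\Psi(\mathcal{U})=\Fac_B N$ is functorially finite. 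With that, the restriction is a poset isomorphism $\ftors B\cong\ftors(B/J)$ and Theorem \ref{bijection} yields $\stau B\cong\stau(B/J)$. Compared with citing \cite{DIRRT}, your route buys a transparent, purely module-theoretic proof of exactly the statement used here; what it does not give is the explicit description of the induced bijection on support $\tau$-tilting modules ($M\mapsto M/MJ$ on the level of two-term silting complexes), which the silting-theoretic proof of \cite{EJR} provides and which the paper implicitly relies on when transporting sign data in Section \ref{tau-tilting-finiteness} --- though your $\Phi$ visibly preserves which simples lie in a torsion class, so that use is also covered.
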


In fact, if $B$ is a weakly symmetric algebras with radical cube zero, then 
$A:=B/\soc B$ is an algebra with radical square zero, and 
$\soc B \subset \rad B$ is an ideal in $B$ satisfying the assumption of Proposition \ref{reduction}.
Therefore, we obtain isomorphisms 
$\tors_{\epsilon} B \cong \tors_{\epsilon} A \cong \fators_{\epsilon} A_{\epsilon}^!$ for every $\epsilon \in \{\pm\}^{n}$ by Theorem \ref{tors-fators}. Similarly, we have $\stau_{\epsilon} B \cong \stau_{\epsilon} A \cong \tilt A_{\epsilon}^!$.

We end this paper with enumerating the number of support $\tau$-tilting modules over 
a special class of symmetric algebras, called Brauer line algebras and Brauer cycle algebras, as an application of Theorem \ref{stau-tilt-tilt}. 

\begin{definition}
A {\it Brauer line algebra} having $n$ edges is defined by the bounded quiver algebra $A_{\mathbb{L}_n}:=k\Gamma/I$, where $\Gamma$ is a quiver given by 
$$
\begin{xy}
(-16,0)*{\Gamma \colon}, (0, 0)*+{1}="1", (18, 0)*++{2}="2",  
(36, 0)*++{}="3",
(42,0)*+{\cdots}, (48, 0)*++{}="n-2",
(68, 0)*++{n-1}="n-1", (88, 0)*+{n}="n",
{"1" \ar@(lu, ld)_{a_0} "1"},
{"1" \ar@<-1mm>_{a_1} "2"},
{"2" \ar@<-1mm>_{b_1} "1"},
{"2" \ar@<-1mm>_{a_2} "3"},
{"3" \ar@<-1mm>_{b_2} "2"},
{"n-2" \ar@<-1mm>_{a_{n-2}} "n-1"},
{"n-1" \ar@<-1mm>_{b_{n-2}} "n-2"},
{"n-1" \ar@<-1mm>_{a_{n-1}} "n"},
{"n" \ar@<-1mm>_{b_{n-1}} "n-1"},
{"n" \ar@(rd, ru)_{b_n} "n"},
\end{xy}
$$
and $I$ is an ideal in $k\Gamma$ generated by all elements
\begin{itemize}
\item $a_0a_1, a_1a_2, \ldots, a_{n-2}a_{n-1}, a_{n-1}b_n$, 
\item $b_1a_0, b_2b_1, \ldots, b_{n-1}b_{n-2}, b_nb_{n-1}$, 
\item $a_0^{m(0)}-(a_1b_1)^{m(1)}, (b_1a_1)^{m(1)}-(a_2b_2)^{m(2)}, \ldots, (b_{n-1}a_{n-1})^{m(n-1)}-b_n^{m(n)}$, 
\end{itemize}
where $m \colon \{0,\ldots,n\} \rightarrow \mathbb{Z}_{>0}$ is a function such that there is at most one element $j \in \{0,\ldots, n\}$ satisfying $m(j)>1$.
\end{definition}

For Brauer line algebras, we have the following.

\begin{theorem} \label{line}
Let $A_{\mathbb{L}_n}$ be a Brauer line algebra having $n$ edges. 
Then we have
$$
\#\stau A_{\mathbb{L}_n} = \binom{2n}{n}.
$$
\end{theorem}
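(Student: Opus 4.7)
The plan is to reduce $A_{\mathbb{L}_n}$ to a radical-square-zero algebra via Proposition~\ref{reduction}, apply Theorem~\ref{stau-tilt-tilt} to split $\stau$ by signature, identify each summand as a product of Dynkin type $\mathbb{A}$ tilting counts, and conclude with a generating-function identity.

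First, since $A_{\mathbb{L}_n}$ is symmetric, the elements $a_0^{m(0)} = (a_1b_1)^{m(1)}$, $(b_ia_i)^{m(i)} = (a_{i+1}b_{i+1})^{m(i+1)}$, \ldots lie in the socle and are central. Iteratively applying Proposition~\ref{reduction} (first to reduce each $m(i)$ to $1$, then to cut down successive socle layers) yields an algebra $A$ with radical square zero with $\stau A_{\mathbb{L}_n} \cong \stau A$. The resulting quiver of $A$ is the ``doubled line'': vertex set $[n]$ together with arrows $\alpha_i \colon i \to i+1$ and $\beta_i \colon i+1 \to i$ for $i = 1, \ldots, n-1$, with all length-two paths zero. (Any residual loops at the endpoints are killed by the $A_{\epsilon}$-construction and do not affect $\tilt A_{\epsilon}^!$.)

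Next, Theorem~\ref{stau-tilt-tilt} gives
$$\#\stau A \;=\; \sum_{\epsilon \in \{\pm 1\}^n} \#\tilt A_{\epsilon}^!.$$
By Remark~\ref{components}, the valued quiver of $A_{\epsilon}$ has an arrow between $i$ and $i+1$ precisely when $\epsilon(i) \ne \epsilon(i+1)$; the hereditary algebra $A_{\epsilon}^!$ has the same underlying graph. Each connected component of this graph is a path, hence of Dynkin type $\mathbb{A}_m$ where $m$ is its number of vertices. By the table following Proposition~\ref{tilting number}, the $\mathbb{A}_m$-component contributes $C_m := \frac{1}{m+1}\binom{2m}{m}$ tilting modules, and components contribute independently:
$$\#\tilt A_{\epsilon}^! \;=\; \prod_{c} C_{|c|},$$
the product ranging over connected components $c$ of the underlying graph of $\Gamma_{A_{\epsilon}}$.

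It remains to prove the combinatorial identity
$$a_n \;:=\; \sum_{\epsilon \in \{\pm 1\}^n} \prod_c C_{|c|} \;=\; \binom{2n}{n}.$$
Each nonempty sign sequence $\epsilon$ is encoded by its initial sign (two choices) together with the composition $(m_1, \ldots, m_k)$ of $n$ recording the sizes of its maximal alternating runs; within each run, the signs alternate and the run is a single graph-component. Hence, for $n \ge 1$,
$$a_n \;=\; 2 \sum_{k=1}^n \ \sum_{\substack{m_1+\cdots+m_k=n\\ m_j \ge 1}} C_{m_1}\cdots C_{m_k},$$
and $a_0 = 1$. Setting $C(x) := \sum_{m\ge 1} C_m x^m$, the generating function becomes
$$f(x) \;=\; 1 + 2\,\frac{C(x)}{1-C(x)} \;=\; \frac{1+C(x)}{1-C(x)}.$$
Using $\sum_{m \ge 0} C_m x^m = \frac{1-\sqrt{1-4x}}{2x}$ and writing $u = \sqrt{1-4x}$, a short computation yields $1+C(x) = \frac{1-u}{2x}$ and $1-C(x) = \frac{u(1-u)}{2x}$, so $f(x) = \frac{1}{u} = \frac{1}{\sqrt{1-4x}} = \sum_{n\ge 0} \binom{2n}{n} x^n$, as required.

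The main obstacle is Step~1: carefully verifying that Proposition~\ref{reduction} can be applied iteratively to the Brauer line algebra with arbitrary multiplicities $m$, so as to reach a radical-square-zero algebra with the doubled-line quiver while preserving $\stau$. The remaining steps -- computing each $A_{\epsilon}^!$ as a type-$\mathbb{A}$ disjoint union and evaluating the generating function -- are then essentially mechanical.
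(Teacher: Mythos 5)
Your argument is correct and follows the same overall strategy as the paper: reduce $A_{\mathbb{L}_n}$ to its radical-square-zero quotient via Proposition~\ref{reduction} (the paper handles your flagged ``main obstacle'' by citing \cite[Corollary 3]{EJR}, which supplies the required ideal generated by central socle elements), apply Theorem~\ref{stau-tilt-tilt} to write $\#\stau A$ as $\sum_{\epsilon}\#\tilt A_{\epsilon}^{!}$, observe that each $\Gamma_{A_{\epsilon}}$ is a disjoint union of type~$\mathbb{A}$ quivers indexed by the maximal alternating runs of $\epsilon$ (your encoding by initial sign plus a composition of $n$ is exactly the paper's bijection with $\{\pm1\}\times\bigcup_r Z_n^r$), and your remark that the endpoint loops are invisible to the $A_{\epsilon}$-construction is also how the paper's setup disposes of them. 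The only genuine divergence is the final combinatorial step: where the paper proves $\sum_{r}\sum_{b\in Z_n^r}\prod_s C_{b_s}=\tfrac12\binom{2n}{n}$ by induction on $n$ using the Catalan recurrences of Lemma~\ref{Catalan}, you evaluate the same sum by the generating-function identity $\frac{1+C(x)}{1-C(x)}=(1-4x)^{-1/2}$, which I have checked and which is arguably cleaner and more transparent than the paper's inductive manipulation; on the other hand, the paper's refinement by the parity of $r$ (Lemma~\ref{part}) is reused later for the odd-cycle case, which your generating function would need a small extension (tracking $(-1)^r$) to recover.
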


Before the proof of Theorem \ref{line}, we need a few equations about 
Catalan numbers $C_n=\frac{1}{n+1}\binom{2n}{n}$. 

\begin{lemma} \cite{Sta}  \label{Catalan}
For any positive integer $n$, the following equations hold. 
\begin{enumerate}
\item $C_{n+1} = \sum_{k=0}^n C_k C_{n-k}$. 
\item $(n+2) C_{n+1} = 2(2n+1)C_n$. 
\item $\sum_{t=0}^n \binom{2t}{t}\binom{2(n-t)}{n-t} = 4^n$.
\end{enumerate}
\end{lemma}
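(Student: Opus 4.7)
The plan is to prove each of the three identities by standard arguments; all are classical and essentially one-line proofs once the right tool is chosen.

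For (1), I would work with the ordinary generating function $C(x) := \sum_{n \geq 0} C_n x^n$. The first-return decomposition of Dyck paths (equivalently, the root-subtree decomposition of plane binary trees) shows that $C(x)$ satisfies the functional equation $C(x) = 1 + x\, C(x)^2$. Extracting the coefficient of $x^{n+1}$ on both sides gives exactly (1). A purely bijective alternative is to write each Dyck path $\pi$ of semilength $n+1$ uniquely as $U\pi_1 D \pi_2$ where $U$ and $D$ are matching up- and down-steps, $\pi_1$ has semilength $k$, and $\pi_2$ has semilength $n-k$, which yields the same convolution.

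For (2), I would simply substitute the closed form $C_n = \tfrac{1}{n+1}\binom{2n}{n}$ into both sides. The left hand side becomes $(n+2) \cdot \tfrac{1}{n+2}\binom{2n+2}{n+1} = \binom{2n+2}{n+1}$, and the elementary identity $\binom{2n+2}{n+1} = \tfrac{(2n+2)(2n+1)}{(n+1)^2}\binom{2n}{n} = \tfrac{2(2n+1)}{n+1}\binom{2n}{n}$ matches $2(2n+1) C_n$ exactly.

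For (3), I would invoke the generalized binomial theorem to write $\sum_{n \geq 0} \binom{2n}{n} x^n = (1 - 4x)^{-1/2}$. Squaring both sides yields $(1 - 4x)^{-1} = \sum_{n \geq 0} 4^n x^n$, and Cauchy multiplication shows that the coefficient of $x^n$ on the left is precisely $\sum_{t=0}^n \binom{2t}{t}\binom{2(n-t)}{n-t}$; comparing coefficients gives (3).

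There is no genuine obstacle here: each identity is a textbook consequence of either the Catalan generating function equation $C = 1 + xC^2$, direct factorial cancellation, or the generating function $(1-4x)^{-1/2}$ for central binomial coefficients. The only real choice is stylistic (bijective versus algebraic for (1)); the algebraic route is shorter and self-contained, and since the lemma is cited from Stanley it suffices to give the brief derivations above or simply refer to the reference.
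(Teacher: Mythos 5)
Your proposal is correct: all three derivations (the convolution from $C=1+xC^2$, the direct factorial computation for (2), and squaring $(1-4x)^{-1/2}$ for (3)) are standard and valid. The paper itself gives no proof at all, relying entirely on the citation to Stanley, so your sketch simply supplies the routine details the author omitted; there is no divergence of method to compare.
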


Now, we are ready to prove Theorem \ref{line}. 

\begin{proof}[Proof of Theorem \ref{line}]
Let $A_{\mathbb{L}_n}=k\Gamma/I$ be a Brauer line algebra having $n$ edges as above.  
Let $A_n:=k \Gamma/I'$ where $I'$ is an ideal in $k\Gamma$ generated by all path of length $2$. Then $A_n$ is an algebra with radical square zero. 
By \cite[Corollary 3]{EJR}, there exists an ideal $J$ in $A_{\mathbb{L}_n}$ such that 
$A_{\mathbb{L}_n} /J \cong A_n$ and satisfies the assumption of Proposition \ref{reduction}. 
So there is an isomorphism $\stau A_{\mathbb{L}_n} \cong \stau A_n$ by Proposition \ref{reduction}. 
In particular, 
they have the same number of elements.
So, in the following, we will calculate the number $\# \stau A_n$ instead.

For each map $\epsilon \colon [n] \rightarrow \{\pm1\}$,  
the algebra $(A_n)_{\epsilon}$ is naturally isomorphic to the path algebra $k\Gamma_{\epsilon}$,
where $\Gamma_{\epsilon}$ is a quiver given in Remark \ref{components}.
It is easy to see that $\Gamma_{\epsilon}$ is a disjoint union of quiver of type $\mathbb{A}$. 
So, $A_n$ is $\tau$-tilting-finite by Corollary \ref{tau-tilt-finite}. 
In this situation, the number $\# \stau A_n$ is given by 
$$
\# \stau A_n = \sum_{\epsilon \colon [n] \rightarrow \{\pm1\}} \# \tilt k\Gamma_{\epsilon} 
$$
by Theorem \ref{stau-tilt-tilt}.
If we consider a map $-\epsilon \colon [n] \rightarrow \{\pm1\}$ given by $(-\epsilon)(i) = -\epsilon(i)$ for all $i\in [n]$, then the associated quiver $\Gamma_{-\epsilon}$ is isomorphic to the opposite quiver of $\Gamma_{\epsilon}$. 
By Proposition \ref{tilting number},  $\# \tilt k\Gamma_{-\epsilon} = \# \tilt k\Gamma_{\epsilon}^{\rm op}=\# \tilt k\Gamma_{\epsilon}$ holds.
Consequently, we have 
\begin{equation} \label{stau compute}
\# \stau A_n = \sum_{\epsilon \colon [n] \rightarrow \{\pm1\}} \# \tilt k\Gamma_{\epsilon} = 2 \sum_{\substack{\epsilon \colon [n] \rightarrow \{\pm1\} \\ \epsilon(1)=1}} \# \tilt k\Gamma_{\epsilon} .
\end{equation}

Next, we determine the shape of the quiver $\Gamma_{\epsilon}$ for a given map 
$\epsilon \colon [n] \rightarrow \{\pm1\}$ with $\epsilon(1)=1$.
Let $\epsilon \colon [n] \rightarrow \{\pm1\}$ be a map such that $\epsilon(1)=1$. 
Then it defines a sequence $(b_1,\ldots, b_r)$ by the equality
$$
\{1 \leq i \leq n\mid  \epsilon(i)=\epsilon(i+1)\} = \{b_1, b_1+b_2, \ldots, \sum_{s=1}^r b_s=n\}. 
$$
In this situation, $\Gamma_{\epsilon}$ is a disjoint union of $r$ connected quivers $Q_1,\ldots, Q_r$ of type $\mathbb{A}$ having $b_1,\ldots, b_r$ vertices respectively, and hence we have 
\begin{equation} \label{decomposition into type A}
\# \tilt k\Gamma_{\epsilon} = \prod_{s=1}^r \#\tilt  kQ_{s} = \prod_{s=1}^r C_{b_s}.
\end{equation}
by Proposition \ref{tilting number} again.
On the other hand, we find that the correspondence $\epsilon \mapsto (b_1,\ldots, b_r)$ defined above provides a bijection between $\{\epsilon \colon [n] \rightarrow \{\pm1\} \mid \epsilon(1)=1\}$ and $\bigcup_{r=1}^n Z_n^r$, where $Z_n^r:=\{(b_1,\ldots, b_r)\in \mathbb{Z}^r \mid 
\text{$\sum_{s=1}^r b_s=n$, $b_s>0$ for all $s=1,\ldots,r$}\}$.
Summarizing these facts, we have 
\begin{equation} \label{Pnr}
\sum_{\substack{ \epsilon \colon [n] \rightarrow \{\pm1\} \\ \epsilon(1)=1}} \# \tilt k\Gamma_{\epsilon}
= \sum_{r=1}^n \sum_{b\in Z_n^r} \prod_{s=1}^r C_{b_s}.
\end{equation}

From the equation (\ref{stau compute}), it reminds to show the following claim. 

\begin{lemma} \label{total sum}
The following equation holds. 
$$
\sum_{r=1}^n \sum_{b\in Z_n^r} \prod_{s=1}^r C_{b_s} = \frac{1}{2}\binom{2n}{n}. 
$$
\end{lemma}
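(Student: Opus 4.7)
Set $P_n := \sum_{r=1}^n \sum_{b \in Z_n^r} \prod_{s=1}^r C_{b_s}$ for $n \geq 1$, and by convention $P_0 := 1$ (the empty composition). The plan is to extract a simple recursion for $P_n$, translate it into a generating-function identity, and recognize the resulting series.

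A composition $b = (b_1, \ldots, b_r) \in Z_n^r$ with $r \geq 1$ is determined uniquely by its first part $b_1 = k \in \{1,\ldots,n\}$ together with the residual composition $(b_2, \ldots, b_r)$ of $n-k$, which is empty exactly when $k = n$. Summing the weight $\prod_s C_{b_s}$ over all compositions therefore gives
\begin{equation*}
P_n \;=\; \sum_{k=1}^{n} C_k\, P_{n-k}, \qquad n \geq 1.
\end{equation*}
Introducing the generating functions $G(x) := \sum_{n \geq 0} P_n x^n$ and the Catalan series $C(x) := \sum_{n \geq 0} C_n x^n$, this recursion reads $G(x) = 1 + (C(x) - 1)\,G(x)$, i.e.\
\begin{equation*}
G(x) \;=\; \frac{1}{2 - C(x)}.
\end{equation*}

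Now I would plug in the closed form $C(x) = (1-\sqrt{1-4x})/(2x)$. Substituting $y := \sqrt{1-4x}$, so that $x = (1-y^2)/4$, simplifies $2 - C(x) = (4x - 1 + y)/(2x)$ to $2y/(1+y)$ after clearing, and hence
\begin{equation*}
G(x) \;=\; \frac{1+y}{2y} \;=\; \frac{1}{2} \;+\; \frac{1}{2\sqrt{1-4x}}.
\end{equation*}
The standard expansion $1/\sqrt{1-4x} = \sum_{n \geq 0} \binom{2n}{n} x^n$ lets me read off coefficients: $P_0 = 1/2 + 1/2 = 1$ as required, and for $n \geq 1$,
\begin{equation*}
P_n \;=\; \tfrac{1}{2}\binom{2n}{n},
\end{equation*}
which is exactly the assertion.

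The only delicate step is the algebraic simplification of $(2 - C(x))^{-1}$; the substitution $y = \sqrt{1-4x}$ makes it routine, but one must rationalize carefully. As a sanity check one can verify the cases $n = 1, 2, 3$: $P_1 = C_1 = 1 = \binom{2}{1}/2$, $P_2 = C_2 + C_1^2 = 3 = \binom{4}{2}/2$, and $P_3 = C_3 + 2 C_1 C_2 + C_1^3 = 10 = \binom{6}{3}/2$. A purely inductive alternative, bypassing generating functions, would apply the induction hypothesis to the recursion and combine Lemma \ref{Catalan}(1) with identity (3) to re-sum the convolution; this works but is longer.
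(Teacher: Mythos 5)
Your proposal is correct: the recursion $P_n=\sum_{k=1}^{n}C_kP_{n-k}$ (peeling off the first part of the composition), the identity $G(x)=1/(2-C(x))$, and the simplification to $\tfrac12+\tfrac12(1-4x)^{-1/2}$ all check out, and the coefficient extraction gives exactly $\tfrac12\binom{2n}{n}$ for $n\geq 1$. However, your route is genuinely different from the paper's. The paper keeps the number of parts $r$ explicit, sets $P_n^r:=\sum_{b\in Z_n^r}\prod_s C_{b_s}$, and proves $\sum_r P_n^r=\tfrac12\binom{2n}{n}$ by induction on $n$: after applying the same first-part decomposition and the induction hypothesis, it rewrites $\binom{2m}{m}=(m+1)C_m$, symmetrizes the convolution $\sum_k(k+1)C_kC_{n-k}$, and closes the argument with the Catalan identities of Lemma \ref{Catalan}(1) and (2). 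Your generating-function argument is shorter and avoids those two identities entirely, at the cost of invoking the closed form of the Catalan series and the expansion of $(1-4x)^{-1/2}$; the paper's induction is more elementary and self-contained, and its refined quantities $P_n^r$ are reused later (Lemma \ref{part} needs the parity-separated sums $\sum_{r\ \mathrm{odd}}P_n^r$ and $\sum_{r\ \mathrm{even}}P_n^r$ for the Brauer odd-cycle count), which your aggregated $P_n$ would not directly provide.
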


\begin{proof}
Let $P^r_n:= \sum_{b \in Z^r_n} \prod_{s=1}^r C_{b_s}$. 
We show that $\sum_{r=1}^n P_n^r = \frac{1}{2}\binom{2n}{n}$ by induction on $n$. 
\begin{enumerate}
\item[i)] For $n=1$, we have $P_1^1=C_1=1$.
\item[ii)] Assume that $n>1$ and $\sum_{r=1}^mP_m^r = \frac{1}{2}\binom{2m}{m}$ holds for $1\leq m<n$.  

Clearly, we have $P_{n}^1 = C_n$, and for $1<r \leq n$, 
\begin{eqnarray*}
 P_{n}^r = \sum_{\substack{b \in Z_{n}^r}} \prod_{s=1}^r C_{b_s} 
=C_1 P_{n-1}^{r-1}+ C_2 P_{n-2}^{r-1}+ \cdots + C_{n-r+1} P_{r-1}^{r-1}.
\end{eqnarray*}
Therefore, 
\begin{align*}
\sum_{r=1}^n P_{n}^r &= C_n + \sum_{k=1}^{n-1} C_k \left\{ \sum_{r=1}^{n-k} P_{n-k}^r \right\} 
\quad \overset{{\rm induction}}{=} C_n + \frac{1}{2} \sum_{k=1}^{n-1} C_k \binom{2(n-k)}{n-k}\\
& \overset{\rm definition}{=} C_n + \frac{1}{2} \sum_{k=1}^{n-1} (n-k+1) C_k C_{n-k} 
= C_n + \frac{1}{2} \sum_{k=1}^{n-1} (k+1) C_k C_{n-k}.
\end{align*}
Adding last two equations, we have
\begin{equation*}
2\cdot \sum_{r=1}^n P_{n}^r =  2C_n + \frac{1}{2}(n+2) \sum_{k=1}^{n-1} C_k C_{n-k} 
\overset{{\rm Lem.\ref{Catalan}(1)}}{=}  2C_n + \frac{1}{2}(n+2)(C_{n+1} - 2C_n) 
\overset{{\rm Lem.\ref{Catalan}(2)}}{=}  \binom{2n}{n}.       
\end{equation*}
Thus, we obtain the desired equation for $n$. \qedhere
\end{enumerate} 
\end{proof}

We finish the proof of Theorem \ref{line}.
\end{proof}

Next, we enumerate the number of support $\tau$-tilting modules over Brauer cycle algebras. 

\begin{definition} \rm
A {\it Brauaer cycle algebra} having $n$ edges is defined by the bounded quiver algebra 
$A_{\Xi_n}:=k\Gamma'/I'$, where $\Gamma'$ is a quiver given by

$$
\begin{xy}
(-15, 0)*{\Gamma' \colon}="a",
(0, 0)*{1}="1", 
(16, 13)*{2}="2", (39, 13)*{3}="3", 
(16,-13)*{n}="n", (39, -13)*{n-1}="n-1", 
(64, -0)*{\cdot}="q", (64, 4)*{\cdot}="w", (64, -4)*{\cdot}="e", 
{ (2,4) \ar@{<-}^{\beta_1} (13,12)},
{ (3.5, 2) \ar@{->}_{\alpha_1} (14,10)},
{ (2,-4) \ar@{->}_{\beta_{n}} (13,-12)},
{ (3.5, -2) \ar@{<-}^{\alpha_{n}} (14,-10)},
{ (20,14) \ar@{<-}^{\beta_2} (35,14)},
{ (20, 12) \ar@{->}_{\alpha_2} (35,12)},
{ (20,-14) \ar@{->}_{\beta_{n-1}} (33,-14)},
{ (20, -12) \ar@{<-}^{\alpha_{n-1}} (33,-12)},
{ (45, 13) \ar@{<-}^{\beta_{3}} (60, 7) },
{ (46, -14) \ar@{->}_{\beta_{n-2}} (60, -7) },
{ (44, 11) \ar@{->}_{\alpha_{3}} (59, 5) },
{ (45, -12) \ar@{<-}^{\alpha_{n-2}} (59, -5) }
\end{xy}
$$

and $I'$ is an ideal generated by all elements
\begin{itemize}
\item $\alpha_n \alpha_1, \alpha_1 \alpha_2, \ldots, \alpha_{n-1} \alpha_n,$
\item $\beta_n \beta_{n-1}, \ldots, \beta_2\beta_1, \beta_1 \beta_n,$
\item $(\beta_n \alpha_n)^{m(n)} - (\alpha_1 \beta_1)^{m(1)}, 
(\beta_{1} \alpha_{1})^{m(1)} - (\alpha_{2} \beta_{2})^{m(2)}, 
\ldots, (\beta_{n-1} \alpha_{n-1})^{m(n-1)} - (\alpha_{n} \beta_{n})^{m(n)}$,
\end{itemize}
where $m\colon [n] \rightarrow \mathbb{Z}_{>0}$ is a function.
We say that $A_{\Xi_n}$ is a {\it Brauer odd-cycle} (resp. {\it even-cycle}) {\it algebra} if $n$ is odd (resp. even).
\end{definition}

It is known that a Brauer cycle graph algebra is $\tau$-tilting-finite if it is odd cycle.

\begin{proposition}\cite[Corollary 4.5]{Ada2}
$A_{\Xi_n}$ is $\tau$-tilting-finite if and only if $n$ is odd.
\end{proposition}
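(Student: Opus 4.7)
The plan is to reduce $A_{\Xi_n}$ to an algebra with radical square zero and then invoke the characterization in Corollary \ref{tau-tilt-finite}, mirroring the strategy used for the Brauer line algebra in Theorem \ref{line}. First, I would invoke \cite[Corollary 3]{EJR} (or an analogous socle-ideal reduction adapted to the cycle case) to produce an ideal $J \subset \rad A_{\Xi_n}$ generated by central elements such that $B_n := A_{\Xi_n}/J$ is isomorphic to $k\Gamma'/I''$, where $I''$ is generated by all paths of length two. By Proposition \ref{reduction}, $\stau A_{\Xi_n} \cong \stau B_n$, so $A_{\Xi_n}$ is $\tau$-tilting-finite if and only if $B_n$ is. This reduces the problem to a combinatorial one about the quiver of a radical-square-zero algebra.

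Next, I would apply Corollary \ref{tau-tilt-finite} to $B_n$. Its valued quiver $\Gamma_{B_n}$ is the $n$-cycle on vertices $\{1,\ldots,n\}$ with, between consecutive cyclic vertices $i$ and $i{+}1$, an arrow $\alpha_i\colon i\to i{+}1$ and an arrow $\beta_i\colon i{+}1 \to i$, each of valuation $(1,1)$. By the description in Remark \ref{components}, for any $\epsilon\colon[n]\to\{\pm1\}$, the quiver $\Gamma_{(B_n)_\epsilon}$ retains only those arrows whose source satisfies $\epsilon = +1$ and whose target satisfies $\epsilon = -1$. Hence between $i$ and $i{+}1$ exactly one arrow survives if $\epsilon(i)\neq\epsilon(i{+}1)$, and no arrow survives if $\epsilon(i)=\epsilon(i{+}1)$. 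Consequently the underlying valued graph of $\Gamma_{(B_n)_\epsilon}$ is the subgraph of the cycle obtained by keeping precisely those edges across which $\epsilon$ changes sign.

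The final step is a short combinatorial observation. Any such subgraph is a disjoint union of path graphs (hence of Dynkin type $\mathbb{A}$), unless every cyclic consecutive pair $(i, i{+}1)$ satisfies $\epsilon(i)\neq\epsilon(i{+}1)$, in which case $\Gamma_{(B_n)_\epsilon}$ is the full $n$-cycle, of extended Dynkin type $\widetilde{\mathbb{A}}_{n-1}$ and thus not Dynkin. A cyclically alternating $\epsilon$ exists if and only if $n$ is even. Combining with Corollary \ref{tau-tilt-finite}, $B_n$ is $\tau$-tilting-finite if and only if no such alternating $\epsilon$ exists, i.e., if and only if $n$ is odd. This gives the desired equivalence.

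The main obstacle is the first step: confirming that an ideal $J$ with the required two properties (contained in the radical, generated by central elements, and collapsing the algebra to its length-two quotient) actually exists for $A_{\Xi_n}$. For the line case this is supplied by \cite[Corollary 3]{EJR}; for cycles one needs the analogous verification, most naturally using the central sums of the socle generators provided by the Brauer graph structure (and handling separately the case where some $m(i)>1$, where a preliminary reduction by the centrally generated ideal of non-trivial cycles around vertex $i$ is needed). Once this reduction is in hand, the rest of the proof is a routine inspection of cycle subgraphs.
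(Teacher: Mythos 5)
Your argument is correct and is essentially the route the paper itself takes: although the proposition is formally cited from \cite[Corollary 4.5]{Ada2}, the paper reproves it in the surrounding text via exactly your chain of reductions --- the central-ideal reduction to the radical-square-zero quotient $B_n$ (Proposition \ref{reduction}, as in the proof of Theorem \ref{odd cycle}), Corollary \ref{tau-tilt-finite}, the observation that an alternating $\epsilon$ yields $\widetilde{\mathbb{A}}_{n-1}$ when $n$ is even, and the fact (Lemma \ref{odd}) that for $n$ odd every $\Gamma'_{\epsilon}$ is a disjoint union of type $\mathbb{A}$ quivers. Your flagged obstacle about the existence of the central ideal $J$ is legitimate but is handled the same way in the paper, by appeal to the Eisele--Janssens--Raedschelders reduction as in the Brauer line case.
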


If $n$ is even, we can take a map $\epsilon \colon [n] \rightarrow \{\pm1\}$ such that 
$\epsilon(i)=1$ if $i$ is odd; otherwise $-1$, 
then the underlying graph of $\Gamma'_{\epsilon}$ is an extended Dynkin graph 
$\tilde{\mathbb{A}}$. 
On the other hand, if $n$ is odd, then we have the following result.

\begin{lemma} \label{odd}
Assume that $n$ is odd. 
For any map $\epsilon \colon [n] \rightarrow \{\pm1\}$, 
the quiver $\Gamma'_{\epsilon}$ is a disjoint union of odd number of connected quivers of type $\mathbb{A}$. 
\end{lemma}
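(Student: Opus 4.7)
The plan is to read the quiver $\Gamma'_{\epsilon}$ off directly from the cyclic shape of the valued quiver and then reduce the assertion to a parity count on the cyclic sign sequence $\epsilon$. First I would invoke Remark \ref{components}: the valued quiver of (the radical square zero reduction of) $A_{\Xi_n}$ has vertex set $[n]$ arranged as an $n$-cycle, with a pair of arrows $i \to i+1$ and $i+1 \to i$ (indices modulo $n$) between each pair of consecutive vertices. Since $\Gamma'_{\epsilon}$ keeps an arrow from $i$ to $j$ exactly when $\epsilon(i)=1$ and $\epsilon(j)=-1$, the underlying undirected graph of $\Gamma'_{\epsilon}$ is obtained from the $n$-cycle by deleting every edge $\{i,i+1\}$ for which $\epsilon(i)=\epsilon(i+1)$.

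Next, I would set $s := \#\{i \in \mathbb{Z}/n\mathbb{Z} \mid \epsilon(i)=\epsilon(i+1)\}$, so that $n-s$ is the number of sign changes encountered when traversing the cycle once. Since the sequence closes up, this number of sign changes must be even, forcing $s \equiv n \pmod 2$. With $n$ odd this gives $s$ odd, and in particular $s \geq 1$, so at least one edge has been deleted from the $n$-cycle. Consequently $\Gamma'_{\epsilon}$ is a forest---a disjoint union of paths---whose underlying valued graphs are therefore of type $\mathbb{A}$ (the arrow orientations are irrelevant to the statement, and the surviving arrows all carry valuation $(1,1)$).

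Finally, the standard identity $(\text{components}) = (\text{vertices}) - (\text{edges})$ for a forest gives that the number of connected components of $\Gamma'_{\epsilon}$ equals $n - (n-s) = s$, which is odd. I do not anticipate any real obstacle; the crux of the argument is the cyclic parity observation $s \equiv n \pmod 2$, which is also exactly where the hypothesis that $n$ is odd enters---for $n$ even one can take the alternating pattern $s=0$, leaving the $n$-cycle intact and producing a single component of extended Dynkin type $\tilde{\mathbb{A}}_{n-1}$, which is precisely the obstruction to $\tau$-tilting finiteness in the even-cycle case.
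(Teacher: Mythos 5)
Your proposal is correct and follows essentially the same route as the paper: the paper also identifies the components as type $\mathbb{A}$ and then deduces oddness of their number $r$ from the parity relation obtained by traversing the cycle once (signs flip across kept edges and persist across deleted ones, giving $r\equiv n\pmod 2$), which is exactly your observation that the number of sign changes around a closed cycle is even. Your write-up is if anything more careful, since you explicitly note $s\geq 1$ (so the cycle is genuinely broken into paths) and count components via vertices minus edges, points the paper dispatches with ``Clearly.''
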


\begin{proof}
Let $\epsilon \colon [n] \rightarrow \{\pm1\}$ be a map.
Clearly, every connected component of $\Gamma'_{\epsilon}$ is a quiver of Dynkin type $\mathbb{A}$, so we see that the number of these connected components is odd. 

Assume that $\Gamma'_{\epsilon}$ is a disjoint union of $r$ connected quivers. 
In this situation, we have $-\epsilon(1) = (-1)^{r-1}(-1)^{n} \epsilon(1)$. 
Since $n$ is odd, $r$ must be odd.
\end{proof}

For this class of algebras, our result is the following.

\begin{theorem} \label{odd cycle}
Let $A_{\Xi_n}$ be a Brauer odd-cycle algebra having $n$ edges.
Then we have 
\begin{equation*}
\# \twotilt A_{\Xi_n} = 2^{2n-1}.
\end{equation*}
\end{theorem}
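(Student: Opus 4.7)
The plan is to imitate the proof of Theorem \ref{line}. First, by \cite[Corollary 3]{EJR} together with Proposition \ref{reduction}, the passage from $A_{\Xi_n}$ to the radical-square-zero algebra $B_n := k\Gamma'/I'$ (with $I'$ generated by paths of length two) preserves support $\tau$-tilting modules, so it suffices to compute $\#\stau B_n$. Applying Theorem \ref{stau-tilt-tilt} together with Lemma \ref{odd}, and using that each connected component of $\Gamma'_\epsilon$ is of Dynkin type $\mathbb{A}$, I obtain
$$
\#\stau B_n \;=\; \sum_{\epsilon \in \{\pm 1\}^n}\, \prod_{s=1}^{r(\epsilon)} C_{b_s(\epsilon)},
$$
where $(b_1(\epsilon),\ldots,b_{r(\epsilon)}(\epsilon))$ denote the component sizes.

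Next I reparameterize by the cut configuration $C(\epsilon) \subseteq [n]$, the set of cyclic edges at which $\epsilon(i)=\epsilon(i+1)$ (indices modulo $n$). The parity argument in Lemma \ref{odd} forces $|C(\epsilon)| \equiv n \pmod 2$, and each admissible $C$ arises from exactly two signatures. For odd $n$ this gives
$$
\#\stau B_n \;=\; 2 \sum_{\substack{r \text{ odd}\\ 1 \le r \le n}} T(n,r), \qquad T(n,r) := \sum_{|C|=r}\prod_s C_{b_s(C)}.
$$
A double counting of pairs (cut configuration of size $r$ equipped with a marked arc) versus (linear composition $(b_1,\ldots,b_r) \in Z_n^r$ equipped with a marked starting vertex) yields $r\, T(n,r) = n\, P_n^r$, where $P_n^r = \sum_{(b) \in Z_n^r}\prod_s C_{b_s}$ is the quantity used in the proof of Theorem \ref{line}.

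The decisive step is the closed form
$$
P_n^r \;=\; \frac{r}{n}\binom{2n}{n-r},
$$
which I would derive by Lagrange inversion applied to the Catalan functional equation $c(x) = x(1+c(x))^2$ with $F(c) = c^r$. Substituting and setting $s := n-r$,
$$
\#\stau B_n \;=\; 2\sum_{\substack{r \text{ odd}\\1\le r \le n}} \binom{2n}{n-r} \;=\; 2\sum_{\substack{s \text{ even}\\ 0 \le s \le n-1}}\binom{2n}{s}.
$$
Since $n$ is odd the term $\binom{2n}{n}$ is excluded from the right-hand sum, and by the symmetry $\binom{2n}{s}=\binom{2n}{2n-s}$ the truncated sum is exactly half of $\sum_{s\text{ even}}\binom{2n}{s}=2^{2n-1}$. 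This produces $\#\stau A_{\Xi_n} = 2\cdot 2^{2n-2} = 2^{2n-1}$.

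The main obstacle is the Lagrange-inversion identity $P_n^r = \frac{r}{n}\binom{2n}{n-r}$: without this closed form the sum $\sum_{r\text{ odd}}P_n^r/r$ has no obvious evaluation, whereas with it the entire computation collapses to the classical identity $\sum_{s\text{ even}}\binom{2n}{s}=2^{2n-1}$. The bijective bookkeeping preceding this step is a routine cyclic analogue of the line-case combinatorics; the reciprocal factor $n/r$ is what distinguishes the cyclic case and forces the use of the Lagrange formula (or an equivalent cycle-lemma argument).
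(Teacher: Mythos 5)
Your argument is correct, and after the (identical) reduction steps it takes a genuinely different route through the enumeration. The paper anchors the count at the connected component of $\Gamma'_{\epsilon}$ containing the vertex $1$, rules out the case where that component has $n-1$ vertices, and is then led to the sum $nC_n+\sum_{t}tC_t\bigl(\sum_{r\,\mathrm{even}}P^r_{n-t}\bigr)$, which it evaluates via two inductive lemmas (Lemma \ref{total sum} and Lemma \ref{part}, giving $\sum_{r\,\mathrm{odd}}P_n^r=nC_{n-1}$ and $\sum_{r\,\mathrm{even}}P_n^r=(n-1)C_{n-1}$) together with the Catalan identities of Lemma \ref{Catalan}. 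You instead parametrize directly by cut configurations on the cycle, transfer to linear compositions by the standard marked-point double count $rT(n,r)=nP_n^r$, and then invoke the Lagrange-inversion closed form $P_n^r=\frac{r}{n}\binom{2n}{n-r}$ (which is valid: $B(x)=\sum_{k\ge1}C_kx^k$ satisfies $B=x(1+B)^2$, and $[x^n]B^r=\frac{r}{n}[t^{n-r}](1+t)^{2n}$), so that $T(n,r)=\binom{2n}{n-r}$ and the whole computation collapses to $2\sum_{s\,\mathrm{even},\,s\le n-1}\binom{2n}{s}=2^{2n-1}$, using that $s=n$ is odd and hence absent. Your closed form is strictly stronger than what the paper uses: it reproves Lemmas \ref{total sum} and \ref{part} without induction and makes the power of $2$ in the answer transparent, at the cost of importing Lagrange inversion, which the paper avoids by staying within the elementary Catalan recursions already set up for the Brauer line case. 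The only point worth spelling out in a final write-up is the bijection between admissible cut sets ($|C|\equiv n\bmod 2$) and unordered pairs $\{\epsilon,-\epsilon\}$, plus the fact that the weight $\prod_s C_{b_s}$ depends only on component sizes and not on orientations (Proposition \ref{tilting number}); both are immediate.
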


\if0
\begin{proof}
Since $n$ is odd, there exists a vertex $i \in Q_0$ such that $\epsilon(i)=\epsilon(i+1)$. 
By symmetry, we can assume $\epsilon(1)=\epsilon(n)=+$. 
In this situation, we can regard $Q_{\epsilon}$ as the quiver for $\epsilon$ of a Brauer line algebra with $n$ edges by identifying those vertices $i$. 
Then the map $\epsilon$ correspond to $(b_1, \ldots, b_r) \in Z_{n,r}$ for some $r$, and $Q_{\epsilon}$ is a disjoint union of $Q_{b_1}, \ldots, Q_{b_r}$ as in (\ref{decomp}).  
By definition, $\epsilon(i)=\epsilon(i+1)$ holds if and only if $i=\sum_{j=1}^s b_j$ for some $1\leq s \leq r$, and in this case $\epsilon(i)=(-1)^{i-s}$ holds.  
In particular, we have $+=\epsilon(n)=(-1)^{n-r}$, and hence $r$ is odd since $n$ is so.
\qedhere

$i$ in $Q$ such that $i$ and $i+1$ have the same signature modulo $n$. 
Without loss of generality, we can assume $1$ and $n$ have the same signature with 
$\epsilon (1) = \epsilon(n)=+$.  
For such a signature $\epsilon$, we remember that $Q_{\epsilon}$ contains vertices 
neither $1^-$ nor $n^-$. 
Thus, we can regard $Q_{\epsilon}$ as a subquiver of separated quiver of $Q_{\Gamma_n}$ of 
the Brauer line algebra with $n$ edges. 
In this situation, the signature $\epsilon$ corresponds to an element 
$(b_1, \ldots, b_r)$ in $Z_n$, and hence quivers $Q_{b_1}, \ldots, Q_{b_r}$ in (\ref{zn}).
By construction, vertices $i$ and $i+1$ have the same sign if and only if 
$i = \sum_{l=1}^s b_l$  for some $1 \leq s < r$.
Thus, the sign of a vertex $i=\sum_{l=1}^s b_l$ is given by $(-1)^{i-s}$. 
In particular, we have $+ = \epsilon(n)=(-1)^{n-r}$. 
Since $n$ is odd, so is $r$.
\end{proof}
\fi

We need the following result, that coming from the proof of Lemma \ref{total sum}. 
Now, let $P^r_n:= \sum_{b \in Z^r_n} \prod_{s=1}^r C_{b_s}$ be a sum in Lemma \ref{total sum}. 

\begin{lemma} \label{part}

\begin{eqnarray}\label{odd even}
\sum_{\text{$r:$ odd}} P^r_n = n C_{n-1} \quad \text{\rm and} \quad
\sum_{\text{$r:$ even}} P^r_n = (n-1) C_{n-1}.
\end{eqnarray}
\end{lemma}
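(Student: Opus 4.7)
The plan is to deduce Lemma \ref{part} from Lemma \ref{total sum} by computing the \emph{signed} sum $A_n - B_n$, where $A_n := \sum_{r\ \text{odd}} P_n^r$ and $B_n := \sum_{r\ \text{even}} P_n^r$. Lemma \ref{total sum} already provides $A_n + B_n = \tfrac{1}{2}\binom{2n}{n}$, and a routine manipulation of factorials rewrites this as $(2n-1)C_{n-1}$. Consequently, once $A_n - B_n = C_{n-1}$ is established, solving the resulting $2\times 2$ linear system yields $A_n = nC_{n-1}$ and $B_n = (n-1)C_{n-1}$, as desired.

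To get hold of $A_n - B_n$, I first establish the convolution recursion $P_n^r = \sum_{k=1}^{n-1} C_k P_{n-k}^{r-1}$ for $r \geq 2$ (together with $P_n^1 = C_n$) by splitting a composition $(b_1, \ldots, b_r) \in Z_n^r$ at its first part $b_1 = k$. Summing this recursion separately over odd and over even $r \geq 2$ gives
\[ A_n = C_n + \sum_{k=1}^{n-1} C_k B_{n-k}, \qquad B_n = \sum_{k=1}^{n-1} C_k A_{n-k}, \]
and subtracting yields the key identity
\[ A_n - B_n = C_n - \sum_{k=1}^{n-1} C_k (A_{n-k} - B_{n-k}). \]

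I would then finish by induction on $n$. The base case $n = 1$ reduces to $A_1 - B_1 = P_1^1 = C_1 = C_0$. For the inductive step, assuming $A_m - B_m = C_{m-1}$ for all $1 \leq m < n$, the displayed identity becomes
\[ A_n - B_n = C_n - \sum_{k=1}^{n-1} C_k C_{n-k-1} = C_n - (C_n - C_{n-1}) = C_{n-1}, \]
where the second equality isolates the $j = 0$ term in the Catalan convolution $\sum_{j=0}^{n-1} C_j C_{n-1-j} = C_n$ of Lemma \ref{Catalan}(1). There is no genuine obstacle here: the only conceptual point is to notice that while $A_n + B_n$ is somewhat intricate, its alternating companion $A_n - B_n$ satisfies a self-referential recursion that collapses after a single application of Lemma \ref{Catalan}(1); combining the two recovers each of $A_n$ and $B_n$ individually.
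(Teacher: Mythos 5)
Your proposal is correct and follows essentially the same route as the paper: both establish $\sum_{r\,\mathrm{odd}}P_n^r-\sum_{r\,\mathrm{even}}P_n^r=C_{n-1}$ by induction using the convolution recursion $P_n^r=\sum_{k}C_kP_{n-k}^{r-1}$ together with Lemma \ref{Catalan}(1), and then combine this with $\sum_r P_n^r=\tfrac12\binom{2n}{n}$ from Lemma \ref{total sum}. The only cosmetic difference is that you subtract the two recursions to get a single recursion for the difference, while the paper substitutes the induction hypothesis into the expression for the even-indexed sum before subtracting.
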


\begin{proof}
Let $O_n := \sum_{r \colon \text{\rm {odd}}} P^r_n$ and $E_n := \sum_{r \colon \text{\rm {even}}} P^r_n$. 
We claim $O_n - E_n = C_{n-1}$ and show it by induction on $n$. 
After that, we obtain the desired equation (\ref{odd even}) since we have already known $ O_n + E_n = \sum_{r=1}^n P^r_n= \frac{1}{2} \binom{2n}{n}$ by Lemma \ref{total sum}.
\begin{enumerate}
\item[i)] For $n=1$, we have $O_1 - E_1 = 1-0 =1$. 
\item[ii)] Assume that $n>1$ and $O_m - E_m = C_{m-1}$ holds for $1\leq m<n$. Then we have
\begin{equation*}
O_n := \sum_{\text{\rm $r\colon$odd}} P^r_n = \sum_{k=1}^{n-1} C_k \left\{ \sum_{\text{\rm $r\colon$odd}} P_{n-k, r-1} \right\}  + C_n =\sum_{k=1}^{n-1} C_k E_{n-k} + C_n.   
\end{equation*}
Similarly, we have 
\begin{eqnarray*} 
E_n  := \sum_{r \colon \text{\rm {even}}} P^r_n &=& \sum_{k=1}^{n-1} C_k \left\{ \sum_{\text{\rm $r:$ even}} P_{n-k, r-1} \right\}  = \sum_{k=1}^{n-1} C_k O_{n-k} \\ 
&=& \sum_{k=1}^{n-1} C_k (E_{n-k} + C_{n-k-1})  = 
\sum_{k=1}^{n-1} C_kE_{n-k} + C_n - C_{n-1}
\end{eqnarray*}
Therefore, we have the desired equation $O_n - E_n = C_{n-1}$ for $n$.\qedhere
\end{enumerate}
\end{proof}

Now, we are ready to prove Theorem \ref{odd cycle}.

\begin{proof}[Proof of Theorem \ref{odd cycle}]
Let $n$ be an positive odd integer and $A_{\Xi_n}$ be a Brauer odd-cycle algebra having $n$ edges.
Let $B_n:=k\Gamma'/I''$ be an algebra with radical square zero, where $I''$ is an ideal generated by all path of length $2$.
From the similar reason as the case of Brauer line algebras, 
there exists an ideal $J'$ in $A_{\Xi_n}$ such that $A_{\Xi_n}/J' \cong B_n$ and 
satisfying the assumption of Proposition \ref{reduction}. 
Therefore, we have $\#\stau A_{\Xi_n} = \#\stau B_n < \infty$.
In the following, we calculate the number $ \#\stau B_n$ instead.

For each map $\epsilon \colon [n] \rightarrow \{\pm1\}$, 
the quiver $\Gamma'_{\epsilon}$ is a disjoint union of odd number of quivers of type $\mathbb{A}$ by Lemma \ref{odd}. 
Clearly, the quiver $\Gamma'_{-\epsilon}$ associated to a map $-\epsilon$ 
is isomorphic to the opposite quiver of $\Gamma'_{\epsilon}$. 
In particular, they have the same underlying graph. So, we have
\begin{equation} \label{stau compute2}
\# \stau B_n = \sum_{\epsilon \colon [n] \rightarrow \{\pm1\}} \# \tilt k\Gamma'_{\epsilon} = 2 \sum_{\substack{ \epsilon \colon [n] \rightarrow \{\pm1\}\\ \epsilon(1)=1}} \# \tilt k\Gamma'_{\epsilon} .
\end{equation}
by Theorem \ref{stau-tilt-tilt}. 

Next, we observe the shape of $\Gamma'_{\epsilon}$ more detail.
Let $\epsilon \colon [n] \rightarrow \{\pm1\}$ be a map such that $\epsilon(1)=1$. 
Let $Q$ be a connected component of $\Gamma'_{\epsilon}$ containing the vertex $1$. Note that this is a quiver of Dynkin type $\mathbb{A}$.
Suppose that $t$ is the number of vertices of $Q$.
\begin{enumerate}
\item[(a)] If $t=n$, then we have $Q= \Gamma_{\epsilon}'$. 
In this case, we have $\# \tilt k\Gamma_{\epsilon} = \# \tilt kQ = C_n$. 

\item[(b)] Assume that $t=n-1$. Let $i \notin Q$ be the remained vertex of $\Gamma'_{\epsilon}$. 
If $\epsilon(i)\neq \epsilon(i-1)$ or $\epsilon(i)\neq \epsilon(i-1)$ hold, then 
$Q$ contains the vertex $i$: it is a contradiction. 
Thus, they must satisfy $\epsilon(i-1)=\epsilon(i)=\epsilon(i+1)$.
However, we have $\epsilon(i+1)= (-1)^{n}\epsilon(i-1) \neq \epsilon(i-1)$ since $n$ is odd. 
Therefore, $t \neq n-1$. 

\item[(c)] Assume that $1\leq t \leq n-2$. 
By Lemma \ref{odd}, the remained part $\Gamma_{\epsilon} \backslash Q$ is a disjoint union of even number of connected quivers $Q_{b_1}, \ldots ,Q_{b_r}$ of type $\mathbb{A}$ having $b_1, \ldots, b_r$ vertices respectively, where $r$ is even. 
In this case, we have 
$\# \tilt k \Gamma_{\epsilon} = \# \tilt kQ  \prod_{s=1}^r \# \tilt kQ_{b_s} = 
C_t \prod_{s=1}^r C_{b_s}$.
\end{enumerate}

Running over all maps $\epsilon \colon [n] \rightarrow \{\pm1\}$ with $\epsilon(1)=1$, we have 
\begin{equation}
\sum_{\substack{\epsilon \colon [n]\rightarrow \{\pm1\} \\ \epsilon(1)=1}} \# \tilt k\Gamma_{\epsilon} = nC_n + 
\sum_{t=1}^{n-2} tC_t \left\{ \sum_{\text{\rm $r\colon$even}} P_{n-t,r} \right\} 
\overset{{\rm Lem.\ref{part}.}}{=} nC_n + \sum_{t=1}^{n-2} tC_t (n-t-1) C_{n-t-1} . 
\end{equation}

However, the last term is equal to
$$
nC_n + \sum_{t=0}^{n-1} (t+1)C_{t}(n-t)C_{n-t-1} - \sum_{t=0}^{n-1}C_tC_{n-t-1} =
nC_n + \sum_{t=0}^{n-1} \binom{2(t-1)}{t-1}\binom{2(n-t-1)}{n-t-1} -nC_n = 4^{n-1},
$$ 
where we use Lemma \ref{Catalan}(2) and (3). 

Consequently, we get 
$$
\#\stau A_{\Xi_n} = \#\stau B_n = 2 \sum_{\substack{\epsilon \colon [n]\rightarrow \{\pm1\} \\ \epsilon(1)=1}} \# \tilt k\Gamma_{\epsilon} 
= 2\cdot 4^{n-1} =2^{2n-1}.
$$
We complete the proof.
\end{proof}

\end{document}